\title[Riesz transform and vertical oscillation]{Riesz transform and vertical oscillation \\ in the Heisenberg group}
\author[K. F\"assler and T. Orponen]{Katrin F\"assler and Tuomas Orponen}
\address{Department of Mathematics\\ University of Fribourg \\ Chemin du Mus\'{e}e 23,
CH-1700 Fribourg, Switzerland}
\address{Department of Mathematics and Statistics, University of Helsinki, P.O. Box 68 (Pietari Kalmin katu 5), FI-00014 University of Helsinki}
\email{katrin.faessler@unifr.ch}
\email{tuomas.orponen@helsinki.fi}
\date{\today}
\subjclass[2010]{42B20 (Primary) 31C05, 35R03, 32U30, 28A78 (Secondary) }
\thanks{K.F.\ was supported by the Swiss National Science Foundation through project 161299 \emph{Intrinsic rectifiability and mapping theory on the Heisenberg group}. T.O. was supported by the Finnish Academy through the project \emph{Quantitative rectifiability in Euclidean and non-Euclidean spaces}, grant nos. 309365 and 314172.}
\keywords{Singular integrals, Riesz transform, intrinsic Lipschitz graphs, Heisenberg group}
\newcommand{\R}{\mathbb{R}}
\newcommand{\W}{\mathbb{W}}
\newcommand{\He}{\mathbb{H}}
\newcommand{\N}{\mathbb{N}}
\newcommand{\V}{\mathbb{V}}
\newcommand{\calH}{\mathcal{H}}
\newcommand{\calB}{\mathcal{B}}
\newcommand{\calS}{\mathcal{S}}
\newcommand{\calR}{\mathcal{R}}
\newcommand{\spt}{\operatorname{spt}}
\newcommand{\Hd}{\dim_{\mathrm{H}}}
\newcommand{\diam}{\operatorname{diam}}
\newcommand{\card}{\operatorname{card}}
\newcommand{\dist}{\operatorname{dist}}
\newcommand{\Div}{\mathrm{div}_{\He}}
\newcommand{\osc}{\operatorname{osc}}
\newcommand{\w}{\mathrm{v}}
\def\Barint_#1{\mathchoice
          {\mathop{\vrule width 6pt height 3 pt depth -2.5pt
                  \kern -8pt \intop}\nolimits_{#1}}%
          {\mathop{\vrule width 5pt height 3 pt depth -2.6pt
                  \kern -6pt \intop}\nolimits_{#1}}%
          {\mathop{\vrule width 5pt height 3 pt depth -2.6pt
                  \kern -6pt \intop}\nolimits_{#1}}%
          {\mathop{\vrule width 5pt height 3 pt depth -2.6pt
                  \kern -6pt \intop}\nolimits_{#1}}}
\numberwithin{equation}{section}
\theoremstyle{plain}
\newtheorem{thm}[equation]{Theorem}
\newtheorem{lemma}[equation]{Lemma}
\newtheorem{ex}[equation]{Example}
\newtheorem{cor}[equation]{Corollary}
\newtheorem{proposition}[equation]{Proposition}
\newtheorem{question}{Question}
\theoremstyle{definition}
\newtheorem{definition}[equation]{Definition}
\theoremstyle{remark}
\newtheorem{remark}[equation]{Remark}
\begin{document}

\begin{abstract} We study the $L^{2}$-boundedness of the $3$-dimensional (Heisenberg) Riesz transform on intrinsic Lipschitz graphs in the first Heisenberg group $\He$. Inspired by the notion of vertical perimeter, recently defined and studied by Lafforgue, Naor, and Young, we first introduce new scale and translation invariant coefficients $\operatorname{osc}_{\Omega}(B(q,r))$. These coefficients quantify the vertical oscillation of a domain $\Omega \subset \He$ around a point $q \in \partial \Omega$, at scale $r > 0$. We then proceed to show that if $\Omega$ is a domain bounded by an intrinsic Lipschitz graph $\Gamma$, and
\begin{displaymath} \int_{0}^{\infty} \operatorname{osc}_{\Omega}(B(q,r)) \, \frac{dr}{r} \leq C < \infty, \qquad q \in \Gamma, \end{displaymath}
then the Riesz transform is $L^{2}$-bounded on $\Gamma$. As an application, we deduce the boundedness of the Riesz transform whenever the intrinsic Lipschitz parametrisation of $\Gamma$ is an $\epsilon$ better than $\tfrac{1}{2}$-H\"older continuous in the vertical direction.

We also study the connections between the vertical oscillation coefficients, the vertical perimeter, and the natural Heisenberg analogues of the $\beta$-numbers of Jones, David, and Semmes. Notably, we show that the $L^{p}$-vertical perimeter of an intrinsic Lipschitz domain $\Omega$ is controlled from above by the $p^{th}$ powers of the $L^{1}$-based $\beta$-numbers of $\partial \Omega$.
\end{abstract}

\maketitle

\tableofcontents

\section{Introduction}

\subsection{A Euclidean introduction to the Heisenberg Riesz transform} A fundamental singular integral operator (SIO) in $\R^{d}$ is the \emph{$(d - 1)$-dimensional Riesz transform}, formally defined by the convolution
\begin{displaymath} R_{d - 1}\nu(x) = \nu \ast \frac{x}{|x|^{d}}. \end{displaymath}
Here $x/|x|^{d}$ is the \emph{$(d - 1)$-dimensional Riesz kernel} which is, up to a constant, the gradient of the fundamental solution of the Laplacian. Through this connection to the Laplace equation, the operator $R_{d - 1}$ has many applications to problems concerning analytic and harmonic functions. For instance, whenever $R_{d - 1}$ is bounded on $L^{2}(\mu)$ for a $(d - 1)$-regular measure $\mu$, then the support of $\mu$ is non-removable for Lipschitz harmonic functions (or bounded analytic functions in the plane); see the book \cite{tolsabook} of Tolsa for an in-depth introduction to this topic and many more references.

A second application of the SIO $R_{d - 1}$ is the \emph{method of layer potentials} employed to solve the Dirichlet problem
\begin{equation}\label{dirichlet} \begin{cases} \bigtriangleup u(x) = 0, & x \in \Omega, \\ u|_{\partial \Omega} = g, \end{cases} \end{equation}
on domains $\Omega \subset \R^{d}$ with Lipschitz boundaries, and with, say, $g \in L^{2}(\calH^{d - 1}|_{\partial \Omega})$.  As the name suggests, a key component in the method of layer potentials is the study of the \emph{boundary layer potential}
\begin{displaymath} D\nu(x) = \mathrm{p.v.} \frac{1}{\omega_{d}} \int_{\partial \Omega} \frac{(y - x) \cdot n_{\partial \Omega}(y)}{|y - x|^{d}} \, d\nu(y). \end{displaymath}
The boundedness of the operator $D$ on $L^{2}(\calH^{d - 1}|_{\partial \Omega})$ can be derived from the boundedness of $R_{d - 1}$, see \cite{MR501367,MR769382}.

By now, the $L^{2}$-boundedness properties of the operator $R_{d - 1}$ are well-understood. According to a result of David and Semmes \cite{DS1}, generalising earlier works of Calder\'on \cite{Calderon} and Coifman, McIntosh, and Meyer \cite{CMM}, $R_{d - 1}$ is bounded on $L^{2}(\calH^{d - 1}|_{S})$ whenever $S \subset \R^{d}$ is \emph{uniformly $(d - 1)$-rectifiable}.  More recently, Nazarov, Tolsa, and Volberg \cite{ntov} proved a converse: if $S \subset \R^{d}$ is $(d - 1)$-regular, then the uniform rectifiability of $S$ is necessary for the boundedness of $R_{d - 1}$ on $L^{2}(\calH^{d - 1}|_{S})$. These results have been used to show that a compact $(d - 1)$-set is removable for Lipschitz harmonic functions if and only if it is purely $(d - 1)$-unrectifiable \cite{MR1372240,ntov2} and that the Dirichlet problem \eqref{dirichlet} is solvable in Lipschitz domains with $L^{2}$-boundary values \cite{MR769382}.

The work in the current paper is motivated by aspirations to extend parts of the theory above to the case of a basic hypoelliptic and non-elliptic operator, the \emph{sub-Laplacian} (also known as the \emph{Kohn Laplacian})
\begin{displaymath} \bigtriangleup_{\He} = X^{2} + Y^{2} \end{displaymath}
in $\R^{3}$. Here $X$ and $Y$ are the vector fields
\begin{equation}\label{eq:LeftInvVfd} X = \partial_{x} - \frac{y}{2}\partial_{t} \quad \text{and} \quad Y = \partial_{y} + \frac{x}{2}\partial_{t}. \end{equation}
A first step is to understand the $L^{2}$-boundedness of an associated "Riesz transform" operator, which we will soon define.

Whereas the operators $X,Y,\bigtriangleup_{\He}$ do not interact particularly nicely with Euclidean translations, they do commute with the following "left translations" $\tau_{p} \colon \R^{3} \to \R^{3}$,
\begin{displaymath} \tau_{p}(q) := (x + x', y + y', t + t' + \tfrac{1}{2}(xy' - x'y)), \end{displaymath}
where $p = (x,y,t) \in \R^{3}$ and $q = (x',y',t') \in \R^{3}$. This suggests that it is natural to study questions about $\bigtriangleup_{\He}$ in the setting of the first \emph{Heisenberg group} $\He = (\R^{3},\cdot)$, where the group law "$\cdot$" is \textbf{defined} so that $X$ and $Y$ are (left) invariant:
\begin{displaymath} p \cdot q := \tau_{p}(q). \end{displaymath}
It was shown by Folland \cite{MR0494315} that the operator $\bigtriangleup_{\He}$ has a fundamental solution $G \colon \R^{3} \setminus \{0\} \to \R$, whose formula is given by
\begin{displaymath} G(p) = \frac{c}{((x^{2} + y^{2})^{2} + 16t^{2})^{1/2}} =: \frac{c}{\|p\|_{Kor}^{2}}, \qquad p = (x,y,t) \in \He \setminus \{0\}. \end{displaymath}
Here $c > 0$ is a constant, and $\|p\|_{Kor} := ((x^{2} + y^{2})^2 + 16 t^{2})^{1/4}$. This quantity is known as the \emph{Kor\'anyi norm} of the point $p \in \He$, and it induces a metric $d_{Kor}$ on $\He$ via the relation
\begin{equation}\label{eq:Kor} d_{Kor}(p,q) = \|q^{-1} \cdot p\|_{Kor}. \end{equation}
The distance $d_{Kor}$ is invariant under the left translations, that is, $d_{Kor}(p \cdot q_{1},p \cdot q_{2}) = d_{K}(q_{1},q_{2})$ for all $p,q_{1},q_{2} \in \He$.

In analogy with the $(d - 1)$-dimensional Riesz transform discussed above, one may now consider the SIO $R$ formally defined by
\begin{displaymath} R\nu(p) := \nu \ast \nabla_{\He} G(p). \end{displaymath}
Here $\nabla_{\He}$ stands for the \emph{horizontal gradient} $\nabla_{\He}G = (XG,YG)$, and the convolution should be understood in the Heisenberg sense:
\begin{displaymath} f \ast g(p) = \int f(q)g(q^{-1} \cdot p) \, dq. \end{displaymath}
The main open question is the following:
\begin{question}\label{mainQ} For which locally finite Borel measures $\mu$ on $\He$ (equivalently $\R^{3}$) is the operator $R$ bounded on $L^{2}(\mu)$? \end{question}
Here, the boundedness on $L^{2}(\mu)$ is defined in the standard way via $\epsilon$-truncations; see Section \ref{s:Riesz} for the precise definition.

\subsection{Previous work} To the best of our knowledge, the Heisenberg Riesz transform $R$ was first mentioned in the paper \cite{CM} of Chousionis and Mattila, where the following removability question was raised and studied: which subsets of $\He$ (more generally, of Heisenberg groups of arbitrary dimensions) are removable for Lipschitz harmonic functions? The notions of 'Lipschitz' and 'harmonic' should be interpreted in the Heisenberg sense: we call a function $u \colon \He \to \R$ \emph{harmonic} if it solves the sub-Laplace equation $\bigtriangleup_{\He} u = 0$. A function $f \colon \He \to \R$ is \emph{Lipschitz} if $|f(p) - f(q)| \leq Ld_{Kor}(p,q)$ for some $L \geq 1$ and all $p,q \in \He$.

It was shown in \cite[Theorem 3.13]{CM} that the critical exponent for the removability problem in $\He$ is $3$ (keeping in mind that $\Hd (\He,d_{Kor}) = 4$). More precisely, sets with vanishing $3$-dimensional measure are removable, while sets of Hausdorff dimension exceeding $3$ are not. In \cite[Section 5]{CM}, the authors formulate (essentially) Question \ref{mainQ} and suggest its connection to the removability problem.

The connection was formalised by Chousionis and the authors in \cite{CFO2}:
\begin{thm}[Theorem 1.2 in \cite{CFO2}] If $\mu$ is a $3$-regular measure on $\He$ (see \eqref{3regular} below), and $R$ is bounded on $L^{2}(\mu)$, then $\spt \mu$ is non-removable for Lipschitz harmonic functions in $\He$. \end{thm}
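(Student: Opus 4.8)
The plan is to carry out, in the Heisenberg setting, the classical mechanism that converts $L^{2}(\mu)$-boundedness of a Riesz-type operator into a non-removability statement: use the boundedness hypothesis to extract a non-trivial piece $\nu$ of $\mu$ whose Riesz potential $R\nu$ is bounded, and then observe that the single-layer potential $G \ast \nu$ is a globally Lipschitz function on $\He$ which is $\bigtriangleup_{\He}$-harmonic off $\spt \nu$ but not on all of $\He$. Since any such function witnesses the non-removability of $\spt \mu$, and since replacing $\mu$ by its restriction to a large closed ball preserves both the $L^{2}$-boundedness of $R$ and the polynomial ($3$-dimensional) growth of the measure, we may assume from the outset that $E := \spt \mu$ is compact. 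A preliminary remark: $\nabla_{\He}G$ is a standard Calder\'on--Zygmund kernel relative to $(\He, d_{Kor}, \calH^{3})$ --- by Folland's formula $G$ is homogeneous of degree $-2$ under the Heisenberg dilations and smooth away from the origin, so $\nabla_{\He}G$ is homogeneous of degree $-3$ and satisfies $|\nabla_{\He}G(p)| \lesssim \|p\|_{Kor}^{-3}$ together with the expected H\"older estimates --- and, $3$ being exactly the codimension of a $3$-regular set in the $4$-dimensional group $\He$, the non-homogeneous Calder\'on--Zygmund theory of Nazarov--Treil--Volberg and Tolsa applies verbatim in this metric measure space.

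The first half of the proof uses the hypothesis to extract $\nu$. From the $L^{2}(\mu)$-boundedness of $R$, a Cotlar-type inequality valid in the non-homogeneous setting shows that the maximal truncations $R_{*}$ are bounded on $L^{2}(\mu)$, and hence of weak type $(1,1)$ with respect to $\mu$. A stopping-time selection --- based on this weak bound and Chebyshev's inequality, and exploiting the $3$-growth of $\mu$ together with the smoothness of $\nabla_{\He}G$ to control the interaction of $\mu|_{F}$ with its complement --- then produces a compact set $F \subseteq E$ with $\mu(F) > 0$ on which $R_{*}(\mu|_{F})$ is bounded $\mu$-almost everywhere. Setting $\nu := \mu|_{F}$, a final comparison argument --- relating the value of the (untruncated) integral $R\nu$ at a point $p \notin \spt \nu$ to its value at a nearby point of $\spt \nu$, via the kernel estimates and the $3$-growth of $\nu$ --- upgrades this to the pointwise bound $|R\nu(p)| \leq C$ for every $p \in \He \setminus \spt \nu$. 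I expect this half of the argument, and in particular the stopping-time selection of $F$, to be the main obstacle; the Heisenberg-specific facts invoked en route --- that $\nabla_{\He}G$ is a Calder\'on--Zygmund kernel and that the non-homogeneous theory functions over $(\He, d_{Kor})$ --- are essentially computational and available in the literature, so the core difficulty is the same as in the Euclidean case.

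The second half is the construction. Let $u := G \ast \nu$. Because $G \in L^{1}_{\mathrm{loc}}(\He)$ --- indeed $\int_{\|p\|_{Kor} < 1} \|p\|_{Kor}^{-2} \, d\mathcal{L}^{3}(p) < \infty$, the Lebesgue measure of a Kor\'anyi ball of radius $r$ being comparable to $r^{4}$ --- and $\nu$ is a finite, compactly supported measure of $3$-growth, $u$ is a well-defined continuous function on $\He$, and $\bigtriangleup_{\He}u = c_{0}\nu$ in the distributional sense for some $c_{0} \neq 0$. By hypoellipticity of $\bigtriangleup_{\He}$, $u$ is then smooth and genuinely $\bigtriangleup_{\He}$-harmonic on $\He \setminus \spt \nu$, a set which contains $\He \setminus E$. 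Moreover, left-invariance of the horizontal vector fields gives $\nabla_{\He}u = (\nabla_{\He}G) \ast \nu = R\nu$ in the distributional sense (using also $\nabla_{\He}G \in L^{1}_{\mathrm{loc}}(\He)$), so $|\nabla_{\He}u(p)| \leq C$ for every $p \notin \spt \nu \subseteq F$ by the bound just obtained. Since $F \subseteq E$ has Hausdorff dimension at most $3 < 4$, and since Lebesgue measure on $\R^{3} = \He$ is, up to a constant, the $4$-dimensional Hausdorff measure with respect to $d_{Kor}$, the set $F$ is Lebesgue-null; hence $\nabla_{\He}u \in L^{\infty}(\He)$. A continuous function with bounded horizontal gradient is Lipschitz with respect to the Carnot--Carath\'eodory metric, hence with respect to the equivalent metric $d_{Kor}$; so $u$ is globally Lipschitz on $\He$.

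Finally, $u$ witnesses the non-removability of $E$. Let $U \supseteq E$ be any open set. Then $u|_{U}$ is Lipschitz on $U$ and $\bigtriangleup_{\He}$-harmonic on $U \setminus E$; if it were $\bigtriangleup_{\He}$-harmonic on all of $U$, then $\bigtriangleup_{\He}u = 0$ on $U$, contradicting $\bigtriangleup_{\He}u = c_{0}\nu$ with $c_{0} \neq 0$ and $\nu(U) = \mu(F) > 0$ (recall $\spt \nu \subseteq F \subseteq E \subseteq U$). Therefore $E = \spt \mu$ is not removable for Lipschitz harmonic functions in $\He$, as claimed.
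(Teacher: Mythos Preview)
The paper does not prove this theorem at all: it is stated as background in the ``Previous work'' subsection and attributed to \cite{CFO2}. There is therefore no proof in the present paper to compare your proposal against; any comparison would have to be with the argument in \cite{CFO2} itself.

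That said, your outline is the standard route and is essentially what one expects the argument in \cite{CFO2} to be: restrict to a compact piece of $\spt\mu$, use non-homogeneous Calder\'on--Zygmund theory (valid in the doubling metric space $(\He,d_{Kor})$, with $\nabla_{\He}G$ a $-3$-homogeneous standard kernel) to pass from $L^{2}(\mu)$-boundedness to boundedness of the maximal operator $R_{*}$, extract by a stopping-time argument a set $F$ of positive measure on which $R_{*}(\mu|_{F})$ is bounded, and then observe that the single-layer potential $u=G\ast(\mu|_{F})$ is globally Lipschitz and $\bigtriangleup_{\He}$-harmonic off $F$ but not on any neighbourhood of $F$. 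Your sketch is coherent; the two places where you defer to the literature rather than argue in full---the stopping-time extraction of $F$ with the requisite uniform bound on $R_{*}(\mu|_{F})$, and the implication ``continuous with $\nabla_{\He}u\in L^{\infty}$ implies $d_{Kor}$-Lipschitz''---are genuine but well-documented facts, so no actual gap is present.
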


In \cite{CFO2}, we also proved the first non-trivial results on the $L^{2}$-boundedness of $R$ (and a class of other SIOs). To discuss these results, and also the ones in the present paper, we need the concept of \emph{intrinsic Lipschitz functions and graphs}. A \emph{vertical subgroup} $\W \subset \He$ is, from a geometric point of view, any $2$-dimensional subspace of $\R^{3}$ containing the $t$-axis. The \emph{complementary horizontal subgroup of $\W$} is the line $\V = \W^{\perp}$ in the $xy$-plane.

 We give the definition of \emph{intrinsic Lipschitz functions} $\phi:\W \to \V$ and the associated \emph{intrinsic Lipschitz graphs} $\Gamma_{\phi}\subset \He$ in Section \ref{ss:intrLip}. These objects were introduced in 2006 by Franchi, Serapioni and Serra Cassano \cite{FSS}, and they appear to be fundamental building blocks in the theory of "high-dimensional" rectifiability in the Heisenberg group, see for example \cite{MR2789472,CFO}. In particular, intrinsic Lipschitz graphs $\Gamma \subset \He$ are closed $3$-regular sets, which means that the measure $\mu = \calH^{3}|_{\Gamma}$ satisfies
\begin{equation}\label{3regular} \mu(B(p,r)) \sim r^{3}, \qquad p \in \spt \mu, \: 0 < r \leq \diam(\spt \mu). \end{equation}
In another paper of Franchi, Serapioni, and Serra Cassano \cite{FSSC2}, a Rademacher-type theorem was established for intrinsic Lipschitz functions: without delving into detail, we just mention that if $\phi \colon \W \to \V$ is intrinsic Lipschitz, then for Lebesgue almost every $w \in \W$ there exists an \emph{intrinsic gradient} for $\phi$, denoted by $\nabla^{\phi}\phi(w)$.

Recall that in $\R^{d}$, Calder\'on \cite{Calderon} and Coifman-McIntosh-Meyer \cite{CMM} proved that $R_{d - 1}$ is bounded on $L^{2}(\calH^{d - 1}|_{\Gamma})$ if $\Gamma \subset \R^{d}$ is a Lipschitz graph. In analogy, one can ask:
\begin{question} Assume that $\Gamma \subset \He$ is an intrinsic Lipschitz graph. Is $R$ bounded on $L^{2}(\calH^{3}|_{\Gamma})$? \end{question}
We are not convinced enough to upgrade the question into a conjecture. In \cite{CFO2}, we obtained a positive answer under a extra regularity:
\begin{thm}[Theorem 1.1 in \cite{CFO2}] Assume that $\alpha > 0$, and $\phi \in C^{1,\alpha}(\W)$ has compact support. Then $R$ is bounded on $L^{2}(\calH^{3}|_{\Gamma_{\phi}})$. \end{thm}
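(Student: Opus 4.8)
The plan is to transfer the problem to a Calder\'on--Zygmund operator on a space of homogeneous type and then invoke a $T(1)$-theorem. Normalise coordinates so that $\W$ is the $yt$-plane and $\V$ the $x$-axis, and let $\Phi \colon \W \to \Gamma_{\phi}$ be the intrinsic graph map. Since compactly supported $C^{1,\alpha}$ functions are intrinsic Lipschitz, $\Gamma_{\phi}$ is $3$-regular, and the pull-back of $\calH^{3}|_{\Gamma_{\phi}}$ under $\Phi$ equals a bounded, bounded-below density $J$ times Lebesgue measure $dy\,dt$ on $\W \cong \R^{2}$. The relevant metric on $\W$ is the ``parabolic'' distance $d_{\W}(v,w) \sim |y - y'| + |t - t'|^{1/2}$ inherited from $d_{Kor}$, for which $(\R^{2},d_{\W},\mathrm{Leb})$ is a homogeneous space of homogeneous dimension $3$. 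It thus suffices to prove $L^{2}(\R^{2})$-boundedness of
\begin{displaymath} Tf(v) = \mathrm{p.v.}\int \widetilde{K}(v,w) f(w)\, dw, \qquad \widetilde{K}(v,w) := \nabla_{\He}G\big(\Phi(w)^{-1}\cdot \Phi(v)\big)\,J(w). \end{displaymath}

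Next I would verify that $\widetilde{K}$ is a standard CZ kernel for $d_{\W}$. The size bound $|\widetilde{K}(v,w)| \lesssim d_{\W}(v,w)^{-3}$ follows because $\nabla_{\He}G$ is homogeneous of degree $-3$ under the Heisenberg dilations, together with the comparability $\|\Phi(w)^{-1}\cdot\Phi(v)\|_{Kor} \sim d_{\W}(v,w)$ that is built into the geometry of intrinsic Lipschitz graphs. The $C^{1,\alpha}$-hypothesis is used in the H\"older-smoothness estimate: differentiating $\nabla_{\He}G$ and using that $w \mapsto \Phi(w)$ and the intrinsic gradient $\nabla^{\phi}\phi$ are $C^{0,\alpha}$ yields incremental bounds for $\widetilde{K}$ with gain $(d_{\W}(w,w')/d_{\W}(v,w))^{\alpha}$ in the range $d_{\W}(w,w') \leq \tfrac{1}{2}d_{\W}(v,w)$. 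The weak boundedness property follows from these size bounds and the antisymmetry of the principal part of the Riesz kernel under $p \mapsto p^{-1}$.

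The crux is the cancellation condition $T1, T^{*}1 \in \mathrm{BMO}$. The structural input is that $G$ solves $\bigtriangleup_{\He}G = X^{2}G + Y^{2}G = 0$ on $\He \setminus \{0\}$, so the horizontal vector field $\nabla_{\He}G = (XG,YG)$ is horizontally divergence-free there; $R$ is then morally a boundary-layer potential, and the action of $T$ on $1$ can be analysed by a Heisenberg Gauss--Green / integration-by-parts argument over the domain $\Omega$ bounded by $\Gamma_{\phi}$. Because $\phi$ has compact support, $\Gamma_{\phi}$ coincides with the plane $\W$ outside a compact set, which kills the boundary terms at infinity and reduces the BMO bound to a uniform estimate for the truncated tails $\int_{\W \setminus B(v,r)} \widetilde{K}(v,w)\,dw$. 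Making this rigorous — justifying the integration by parts across the singularity of $G$, controlling the $\epsilon$-truncations of the principal value, and turning the resulting flux into the required averaged bound, with the $C^{1,\alpha}$ data of $\Gamma_{\phi}$ entering through the surface measure and outward normal — is the technical heart of the argument and the step I expect to be most delicate.

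Granting these ingredients, the $T(1)$-theorem on $(\R^{2},d_{\W},\mathrm{Leb})$ gives $L^{2}$-boundedness of $T$, and unwinding the change of variables and the standard truncation arguments yields boundedness of $R$ on $L^{2}(\calH^{3}|_{\Gamma_{\phi}})$. Should the Gauss--Green justification prove too awkward, a fallback is a direct Cotlar--Stein estimate: decompose $T = \sum_{j}T_{j}$ dyadically in $d_{\W}$ and bound $\|T_{j}T_{k}^{*}\| + \|T_{j}^{*}T_{k}\| \lesssim 2^{-\alpha|j - k|}$, the geometric decay coming from the $\alpha$-H\"older flatness of $\Gamma_{\phi}$ once the single-scale cancellation is extracted, again, from the divergence-free structure of $\nabla_{\He}G$.
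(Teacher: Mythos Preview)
The theorem in question is quoted from \cite{CFO2}; the present paper does not prove it from scratch, but it does \emph{recover} it later as a corollary of its own machinery. The paper's route is: a compactly supported $C^{1,\alpha}$ function is intrinsic Lipschitz and satisfies the vertical H\"older bounds \eqref{holder}--\eqref{holder2} with $\tau=\alpha$; Lemma \ref{holderOscillation} then gives geometric decay of the vertical oscillation coefficients $\osc_{\Omega}(B(p,r))$; this feeds into Theorem \ref{main}, whose proof verifies Christ's local $T(b)$ testing conditions by the divergence theorem, with the $\partial_t$-error terms absorbed by $\int_{0}^{\infty}\osc_{\Omega}(B(p,r))\,\tfrac{dr}{r}$.

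Your proposal is a genuinely different route --- a direct $T(1)$ argument on the parabolic plane $(\R^{2},d_{\W},\mathrm{Leb})$ after pulling back by the graph map. The broad architecture (CZ kernel estimates from $C^{1,\alpha}$, divergence/Gauss--Green for the cancellation condition) is reasonable and is closer in spirit to what \cite{CFO2} actually does. But there is a concrete gap in your outline, and it sits precisely at the Heisenberg-specific difficulty the present paper is built to address.

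You invoke ``antisymmetry of the principal part of the Riesz kernel under $p\mapsto p^{-1}$'' for the weak boundedness property. This fails in $\He$: from \eqref{form10} one has $K(q^{-1})=-\widetilde{X}G(q)+i\widetilde{Y}G(q)$, so
\[
K(q)+K(q^{-1}) \;=\; (X-\widetilde{X})G(q)\,-\,i(Y-\widetilde{Y})G(q)\;=\;-y\,\partial_tG(q)\,-\,ix\,\partial_tG(q),
\]
which is still $(-3)$-homogeneous --- no gain. The same phenomenon surfaces in your $T1$ step: after Gauss--Green, the integrand is not $\bigtriangleup_{\He}G=0$ on the nose; the mismatch between left and right invariant derivatives (equivalently, $[X,Y]=\partial_t$) produces residual terms of the form $\int_{\Omega}\partial_t(\cdots)\,dq$ (compare \eqref{form6}, \eqref{I22}). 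These do not vanish and are exactly what the paper controls via the vertical oscillation coefficients and Lemma \ref{mainLemma}. Your sketch (``turning the resulting flux into the required averaged bound'') does not say how you would handle them; without that, neither the $T1$ bound nor the Cotlar--Stein almost-orthogonality estimate goes through. If you want to salvage the direct approach, you must explain --- using the $C^{1,\alpha}$ hypothesis --- why these $\partial_t$-terms are uniformly bounded; that is the substantive content both here and in \cite{CFO2}.
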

The assumption $\phi \in C^{1,\alpha}(\W)$ means that the intrinsic gradient of $\phi$ exists everywhere and satisfies an intrinsic version of $\alpha$-H\"older regularity (which is weaker than Euclidean $\alpha$-H\"older regularity). The assumption implies, see \cite[Proposition 4.1]{CFO2}, that the affine approximation of $\Gamma_{\phi}$ at $p \in \Gamma$ improves at a geometric rate as one zooms into $p$.

\subsection{New results} A novelty of the current paper is to prove the $L^{2}$-boundedness of $R$ in some scenarios where there is no "pointwise decay" for the quality of affine approximation of $\Gamma$. As a basic example, Theorem \ref{main} below applies to graphs of the form
\begin{displaymath} \Gamma = \Gamma_{\R^{2}} \times \R \subset \He, \end{displaymath}
where $\Gamma_{\R^{2}}$ is a (Euclidean) Lipschitz graph in $\R^{2}$. It turns out that a key feature of these graphs is the following. The two complementary domains $\Omega_{1},\Omega_{2} \subset \He \setminus \Gamma$ have zero "vertical oscillation": for $j \in \{1,2\}$, every vertical line $\ell \subset \He$ satisfies
\begin{equation}\label{alternative} \ell \subset \Omega_{j} \quad \text{or} \quad \ell \cap \Omega_{j} = \emptyset. \end{equation}
The condition \eqref{alternative} is qualitative, not to mention exceedingly restrictive, so we looked for a way to quantify and relax it. For these purposes, we introduce the \emph{vertical oscillation coefficients} $\osc_{\Omega}(B(p,r))$. Given a domain $\Omega \subset \He$ and a point $p \in \partial \Omega$, the number $\osc_{\Omega}(B(p,r))$ quantifies, in a scale and translation invariant way, how far $\Omega$ is (locally) from satisfying \eqref{alternative}. The definition of the coefficients $\osc_{\Omega}(B(p,r))$ was inspired by the notion of \emph{vertical perimeter} recently introduced by Lafforgue and Naor in \cite[Section 4]{MR3273443}, and further studied by Naor and Young in \cite{NY}; Remark \ref{vPerRemark} for the definition. We postpone further details on the vertical oscillation coefficients to Section \ref{vosc}.

Here is the main theorem of the paper:
\begin{thm}\label{mainIntro} Let $\Gamma \subset \He$ be an intrinsic Lipschitz graph, and let $\Omega$ be one of the components of $\He \setminus \Gamma$. Assume that there is a finite constant $C > 0$ such that
\begin{equation}\label{mainAssumption} \int_{0}^{\infty} \osc_{\Omega}(B(p,r)) \, \frac{dr}{r} \leq C, \qquad p \in \partial \Omega. \end{equation}
Then $R$ is bounded on $L^{2}(\calH^{3}|_{\Gamma})$.
\end{thm}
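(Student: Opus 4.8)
We would first reduce the $L^{2}$-boundedness of $R$ to a $\bmo$ estimate for $R$ applied to the constant function, and then establish that estimate by comparing, at every scale, the geometry of $\Omega$ with that of a \emph{vertically saturated} model domain, the defect being precisely what the coefficients $\osc_{\Omega}(B(q,r))$ measure. For the reduction, note that $\Gamma$ is a closed $3$-regular subset of the space of homogeneous type $(\He,d_{Kor},\calH^{3})$ and that $\nabla_{\He}G$ is a Calder\'on--Zygmund kernel whose antisymmetry holds up to lower-order terms; hence a $T(1)$-type theorem in the spirit of \cite{CFO2} is available. By this it suffices to verify a weak boundedness property for $R$ --- which follows from the $3$-regularity of $\Gamma$ and the (approximate) antisymmetry of $\nabla_{\He}G$, as in \cite{CFO2} --- together with $R\mathbf{1}_{\Gamma}\in\bmo(\calH^{3}|_{\Gamma})$. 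Writing $\mu:=\calH^{3}|_{\Gamma}$ and $R_{r}$ for the $r$-truncation, the latter reduces to a bound
\begin{displaymath} \Barint_{B(q_{0},r)\cap\Gamma} |R_{r}(\mathbf{1}_{\Gamma\setminus B(q_{0},2r)}\mu)(q)-m_{q_{0},r}|^{2}\,d\mu(q)\lesssim 1, \qquad q_{0}\in\Gamma,\ r>0, \end{displaymath}
for suitable constants $m_{q_{0},r}$, and for that it is enough to control --- uniformly, up to additive constants --- the two-point oscillation $|R_{r}(\mathbf{1}_{\Gamma}\mu)(q)-R_{r}(\mathbf{1}_{\Gamma}\mu)(q')|$ when $d_{Kor}(q,q')\lesssim r$, and the two-scale oscillation $|R_{\rho}(\mathbf{1}_{\Gamma}\mu)(q)-R_{r}(\mathbf{1}_{\Gamma}\mu)(q)|$ when $0<\rho<r$.

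The organizing principle is the idealized case \eqref{alternative}, in which $\Omega=\omega\times\R$ for a planar Lipschitz domain $\omega\subset\R^{2}$ and $\Gamma=\partial\omega\times\R$; there $\osc_{\Omega}\equiv 0$, so \eqref{mainAssumption} holds for free, and the theorem ought to follow from classical facts. Indeed, in this situation the single-layer potential $u=\mu\ast G$ is independent of the vertical variable (because $\mu$ is invariant under the central vertical translations, which commute with the group law), and $\bigtriangleup_{\He}$ acts on vertically-invariant functions as the ordinary planar Laplacian; equivalently, a direct computation shows that integrating each component of the Riesz kernel $\nabla_{\He}G$ over a vertical fiber collapses it, up to a dimensional constant, to the $2$-dimensional Cauchy kernel of the curve $\partial\omega\subset\R^{2}\cong\C$. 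After a Fourier transform in the vertical variable, $R$ on $\Gamma$ therefore decomposes, up to bounded densities, into a one-parameter family of Calder\'on--Zygmund operators on the Lipschitz curve $\partial\omega$, the parameter-zero member being the planar Cauchy transform; its $L^{2}$-boundedness is the theorem of Calder\'on and Coifman--McIntosh--Meyer, and the other members are handled uniformly by standard arguments. This would prove Theorem \ref{mainIntro} under \eqref{alternative}, with constants depending only on the Lipschitz data.

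For a general intrinsic Lipschitz domain $\Omega$, the plan is to estimate the two oscillations above scale by scale. Fix $q_{0}\in\Gamma$ and a dyadic scale $r$, and let $\Gamma_{r}$ be a vertically saturated graph approximating $\Gamma$ inside $B(q_{0},r)$ --- obtained, say, by freezing or averaging out the vertical dependence of the defining function of $\Gamma$ at scale $r$. The contribution of $\Gamma_{r}$ at scale $r$ is bounded by the quantitative form of the previous paragraph; the discrepancy between the contributions of $\Gamma$ and of $\Gamma_{r}$ is, essentially by the definition of the coefficients, at most a constant multiple of $\osc_{\Omega}(B(q_{0},r))$, once one invokes the $3$-regularity of $\Gamma$ and the decay of $\nabla_{\He}G$ and its derivatives. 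Summing over dyadic scales,
\begin{displaymath} \sum_{j\in\Z}\osc_{\Omega}(B(q_{0},2^{j}))\sim\int_{0}^{\infty}\osc_{\Omega}(B(q_{0},r))\,\frac{dr}{r}\leq C, \end{displaymath}
uniformly in $q_{0}$ by \eqref{mainAssumption}. Inserting this into the reduction of the first paragraph yields the $\bmo$ bound for $R\mathbf{1}_{\Gamma}$, and hence the $L^{2}$-boundedness of $R$ on $L^{2}(\calH^{3}|_{\Gamma})$.

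The hard part will be the scale-by-scale comparison. One must (i) identify the correct approximating vertically saturated graph $\Gamma_{r}$ and show that its defect at scale $r$ is genuinely comparable to $\osc_{\Omega}(B(q_{0},r))$ --- this is what the definition of the vertical oscillation coefficients in Section \ref{vosc} has to be designed to guarantee; (ii) tame the error terms special to the non-commutative setting, notably the discrepancy between $\calH^{3}|_{\Gamma}$ and the horizontal perimeter measure of $\Omega$ (where one uses that $\Omega$ is an intrinsic Lipschitz domain) and the interaction of the truncation with the singularity of $\nabla_{\He}G$, and check that after summation these remain dominated by $\int_{0}^{\infty}\osc_{\Omega}(B(q_{0},r))\,dr/r$ rather than by something larger; and (iii) confirm that the $T(1)$-machinery really does apply to the not-quite-antisymmetric kernel $\nabla_{\He}G$. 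The model case of the second paragraph, though it uses the classical $L^{2}$-theory of the Cauchy integral as a black box, also needs care in the uniform handling of the nonzero vertical frequencies.
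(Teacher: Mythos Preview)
Your strategy diverges substantially from the paper's, and in its present form it has genuine gaps rather than merely a different route.

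The paper does not reduce to $R\mathbf{1}\in\bmo$. It applies Christ's local $T(b)$ theorem with test functions $b_{B}=\psi_{B}\nu$, where $\nu$ is the (complexified) inward horizontal normal of $\Omega$. The reason for testing against $\nu$ rather than $\mathbf{1}$ is structural: the divergence theorem (Theorem~\ref{divergenceTheorem}) converts $\int_{\Gamma}(\psi_{B}\varphi_{\epsilon})K(q^{-1})\,\nu\,d\calS^{3}$ into a solid integral $\int_{\Omega}\Div(\psi_{B}\varphi_{\epsilon}\widetilde{\nabla}_{\He}G)\,dq$. After rewriting the left divergence as a right divergence plus a $\partial_{t}$-correction via \eqref{leftVsRight}, the right-divergence part vanishes because $\widetilde{\bigtriangleup}_{\He}G=0$ away from the origin, and what remains are integrals of the form $\int_{\Omega}\partial_{t}\Psi\,dq$ with $\Psi$ supported in a dyadic annulus around $0$. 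These are exactly what Lemma~\ref{mainLemma} controls: $\bigl|\int_{\Omega}\partial_{t}\psi\bigr|\lesssim r^{4}\|\partial_{t}\psi\|_{\infty}\,\osc_{\Omega}(B(p,10r))$ whenever $\spt\psi\subset B(p,r)$. Summing over dyadic annuli produces precisely $\int_{0}^{\infty}\osc_{\Omega}(B(0,s))\,ds/s$. No approximation of $\Gamma$ by vertically saturated graphs, and no appeal to the planar Cauchy integral, appears anywhere.

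Your $T(1)$ reduction is problematic on two counts. First, $\nabla_{\He}G$ is not antisymmetric in any useful sense: the paper computes $K(q^{-1})=-\widetilde{X}G(q)+i\widetilde{Y}G(q)$, which involves the \emph{right}-invariant gradient, and the discrepancy from $-K(q)$ is of the same order as $K$ itself, not lower order. So neither weak boundedness nor a reduction to testing on $\mathbf{1}$ follows from ``approximate antisymmetry'' as you suggest. Second, and more decisively, testing on $\mathbf{1}$ gives no access to the divergence theorem, which is the only mechanism in sight for passing from surface integrals over $\Gamma$ to the solid-domain quantity $\osc_{\Omega}$.

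Your scale-by-scale comparison with vertically saturated approximants $\Gamma_{r}$ is, as you yourself concede, a wish list. You have not defined $\Gamma_{r}$, and the assertion that the discrepancy between the contributions of $\Gamma$ and $\Gamma_{r}$ at scale $r$ is $\lesssim\osc_{\Omega}(B(q_{0},r))$ is the entire content of the theorem: $\osc_{\Omega}$ is an $L^{1}$ average over the $4$-dimensional set $\Omega\cap B(q_{0},r)$, while the object you must bound is an integral over the $3$-dimensional set $\Gamma$ against a kernel with a genuine singularity. Bridging these two is essentially the combination of the divergence theorem with Lemma~\ref{mainLemma}, and there is no visible way to bypass that step. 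Even your model case (vertical Fourier transform collapsing to a family of planar Cauchy-type operators) is not worked out for nonzero frequencies, and in any event it would only furnish bounds for a piece of the argument that the paper handles directly, without any such reduction.
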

In general, we do not know how reasonable the assumption \eqref{mainAssumption} is. It follows easily from the Rademacher theorem for intrinsic Lipschitz functions (and Corollary \ref{oscillationAndBetas} below) that $\osc_{\Omega}(B(p,r)) \to 0$ for $\calH^{3}$ almost every $p \in \Gamma$ as $r \searrow 0$. But we have no quantitative estimates for $\osc_{\Omega}(B(p,r))$ if nothing better than intrinsic Lipschitz regularity is assumed of $\Gamma$; see Section \ref{problems} for a concrete question in this vein. However, we can complement Theorem \ref{mainIntro} with the following application:
\begin{thm}\label{main2Intro} Let $\phi \colon \W \to \R$ be an intrinsic Lipschitz function that satisfies the following H\"older regularity in the vertical direction:
\begin{equation}\label{holder} |\phi(y,t) - \phi(y,s)| \leq H|t - s|^{(1 + \tau)/2}, \qquad |s - t| \leq 1, \end{equation}
and
\begin{equation}\label{holder2} |\phi(y,t) - \phi(y,s)| \leq H|t - s|^{(1 - \tau)/2}, \qquad |s - t| > 1, \end{equation}
where $H \geq 1$ and $0 < \tau \leq 1$. Then $R$ is bounded on $L^{2}(\calH^{3}|_{\Gamma_{\phi}})$.
\end{thm}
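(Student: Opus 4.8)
The strategy is to deduce Theorem~\ref{main2Intro} from Theorem~\ref{mainIntro}. Since $\phi$ is intrinsic Lipschitz, $\Gamma_{\phi}$ is an intrinsic Lipschitz graph; letting $\Omega$ be one of the two components of $\He \setminus \Gamma_{\phi}$, it therefore suffices to verify the Dini-type condition
\begin{equation}\label{eq:oscGoal}
 \int_{0}^{\infty} \osc_{\Omega}(B(p,r))\,\frac{dr}{r} \leq C(H,\tau) < \infty, \qquad p \in \Gamma_{\phi},
\end{equation}
uniformly in $p$. Everything reduces to turning the vertical H\"older estimates \eqref{holder}--\eqref{holder2} of $\phi$ into power-type control of the coefficients $\osc_{\Omega}(B(p,r))$, and then integrating.

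The key step is the bound
\begin{equation}\label{eq:oscVert}
 \osc_{\Omega}(B(p,r)) \lesssim \frac{1}{r}\, \sup\big\{\, |\phi(y,t) - \phi(y,s)| \,:\, |t - s| \leq C_{0}\,r^{2} \,\big\}, \qquad p \in \Gamma_{\phi},\ r > 0,
\end{equation}
where the implicit constant and $C_{0}$ depend only on $H$ and the intrinsic Lipschitz constant of $\phi$. The heuristic behind \eqref{eq:oscVert} is the qualitative picture of \eqref{alternative}: a Kor\'anyi ball $B(p,r)$ has horizontal extent $\sim r$ and vertical extent $\sim r^{2}$, and (in the coordinates of \eqref{holder}, i.e.\ with $\W = \{x = 0\}$ and $\V$ the $x$-axis) the component $\Omega$ is $\{(x_{0},y_{0},z_{0}) : x_{0} > \phi(y_{0}, z_{0} + \tfrac{1}{2}x_{0}y_{0})\}$ — a description for which the intrinsic Lipschitz hypothesis is exactly what makes $\Omega$ an open connected set with boundary $\Gamma_{\phi}$. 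From this one sees that, inside $B(p,r)$, the set $\Omega$ can fail the "full vertical line" dichotomy \eqref{alternative} only to the extent that $t \mapsto \phi(\cdot,t)$ oscillates over a $t$-interval of length $\sim r^{2}$; and the shear term $\tfrac{1}{2}x_{0}y_{0}$ enters the $\phi$-arguments governing the coefficient identically, hence cancels in the relevant differences. Normalising by the horizontal scale $r$ yields \eqref{eq:oscVert}. Making \eqref{eq:oscVert} precise is the step I expect to demand the most care: one must match the definition of $\osc_{\Omega}(B(p,r))$ from Section~\ref{vosc} to these vertical oscillations of $\phi$, and exploit the intrinsic Lipschitz structure to control how $B(p,r)$ meets $\Gamma_{\phi}$; a convenient preliminary reduction, using the translation-invariance of $\osc_{\Omega}$ together with the fact that left translations carry $\phi$ to a function still obeying \eqref{holder}--\eqref{holder2} with the same $H$ and $\tau$ (the translation-induced shifts of the $t$-argument cancel in the same-$y$ differences occurring in \eqref{holder}--\eqref{holder2}), is to the case $p = 0$. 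Alternatively, one can reach a bound of the shape \eqref{eq:oscVert} by first dominating the $L^{1}$-based $\beta$-numbers of $\Gamma_{\phi}$ by the vertical modulus of continuity of $\phi$ and invoking Corollary~\ref{oscillationAndBetas}.

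Granting \eqref{eq:oscVert}, the rest is a short computation. When $C_{0}r^{2} \leq 1$, estimate \eqref{holder} bounds the supremum in \eqref{eq:oscVert} by $H(C_{0}r^{2})^{(1+\tau)/2} \lesssim_{H} r^{1+\tau}$, so that $\osc_{\Omega}(B(p,r)) \lesssim_{H} r^{\tau}$. When $C_{0}r^{2} > 1$, splitting the relevant $t$-interval into the part with $|t-s| \leq 1$, where \eqref{holder} gives a bound $\leq H$, and the part with $1 < |t-s| \leq C_{0}r^{2}$, where \eqref{holder2} gives $\leq H(C_{0}r^{2})^{(1-\tau)/2}$, bounds the supremum by $\lesssim_{H} r^{1-\tau}$, so that $\osc_{\Omega}(B(p,r)) \lesssim_{H} r^{-\tau}$. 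Setting $r_{0} := C_{0}^{-1/2}$ and using $\tau > 0$, we conclude
\begin{displaymath}
 \int_{0}^{\infty} \osc_{\Omega}(B(p,r))\,\frac{dr}{r} \lesssim_{H} \int_{0}^{r_{0}} r^{\tau - 1}\,dr + \int_{r_{0}}^{\infty} r^{-\tau - 1}\,dr = \frac{r_{0}^{\tau} + r_{0}^{-\tau}}{\tau} < \infty,
\end{displaymath}
with a bound independent of $p \in \Gamma_{\phi}$ because \eqref{holder}--\eqref{holder2} are uniform in the first variable. This is \eqref{eq:oscGoal}, and Theorem~\ref{mainIntro} now gives the $L^{2}(\calH^{3}|_{\Gamma_{\phi}})$-boundedness of $R$.
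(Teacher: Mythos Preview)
Your strategy coincides with the paper's: reduce to Theorem~\ref{mainIntro} by proving $\osc_{\Omega}(B(p,r)) \lesssim \min\{r^{\tau},r^{-\tau}\}$ and integrating in $r$. The paper packages this as Lemma~\ref{holderOscillation}, which is exactly your key estimate $\osc_{\Omega}(B(p,r)) \lesssim r^{-1}\sup\{|\phi(y,t)-\phi(y,s)|:|t-s|\le C_{0}r^{2}\}$ specialised to the given moduli, and the computation you perform after granting that estimate is correct.

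The gap is that the key estimate is only sketched, and ``normalising by the horizontal scale $r$'' does not point to the actual mechanism. The paper's argument has two concrete pieces. First, if $q = (x,y,t) \in B(p,r)$ satisfies $\chi_{\Omega}(q) \neq \chi_{\Omega}(q \cdot (0,0,s^{2}))$ for some $0 < s \leq r$, then some $q \cdot (0,0,u^{2})$ with $0 \leq u \leq s$ lies on $\Gamma$; writing out $\pi_{\W}$ explicitly and applying \eqref{holder} (or \eqref{holder2}) forces $|x - \phi(\pi_{\W}(q))| \leq Hr^{1 \pm \tau}$, hence $\dist(q,\Gamma) \leq Hr^{1 \pm \tau}$. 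Thus the integrand in $\w_{\Omega}(B(p,r))(s)$ is supported in the tube $B(p,r) \cap \Gamma(Hr^{1 \pm \tau})$. Second --- and this is the substance your heuristic misses --- one bounds the $\calH^{4}$-measure of this tube using the $3$-regularity of $\Gamma$: a maximal $Hr^{1 \pm \tau}$-separated net in $\Gamma \cap B(p,CHr)$ has cardinality $\lesssim r^{\mp 3\tau}$, and the concentric balls of radius comparable to $Hr^{1 \pm \tau}$ cover the tube, giving $\calH^{4}(B(p,r) \cap \Gamma(Hr^{1 \pm \tau})) \lesssim H^{4}r^{4 \pm \tau}$. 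Dividing by $r^{4}$ yields the oscillation bound.

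The alternative route through Corollary~\ref{oscillationAndBetas} does not work as stated: the $\beta_{\partial\Omega,1}$-numbers measure approximation by vertical planes and need not be small just because the vertical modulus of $\phi$ is. For instance, if $\phi(y,t) = \phi_{0}(y)$ with $\phi_{0}$ a nonlinear Lipschitz function of one variable, the vertical modulus vanishes identically while $\beta_{\partial\Omega,1}(B(p,r))$ stays bounded away from zero at all scales. Go directly, as above.
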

It is well-known that intrinsic Lipschitz functions are always $1/2$-H\"older continuous in the vertical direction. So, Theorem \ref{main2Intro} states that an $\epsilon$ of additional regularity in this one direction yields the $L^{2}$-boundedness of $R$ on $\Gamma_{\phi}$.

\subsection{Vertical oscillation and $\beta$-numbers} A fundamental concept in the theory of quantitative rectifiability in $\R^{n}$ is the \emph{$\beta$-number}, first introduced by Jones in \cite{MR1069238}, then further developed by David and Semmes \cite{DS1}, and later applied by too many authors to begin acknowledging here. It is no surprise that suitable variants of the $\beta$-numbers (see Section \ref{betaNumberSection} for definitions) can also be used to study quantitative rectifiability questions in $\He$, as well as higher dimensional Heisenberg groups. A few papers already doing so are \cite{CFO,CFO2,CL,2018arXiv180304819F,MR2789375,MR3456155,MR3512421}. Since we here introduce new coefficients related to the theory of quantitative rectifiability in $\He$, it is natural to ask: is there a connection to $\beta$-numbers? We investigate this matter in Sections \ref{betaNumberSection} and \ref{perimeterAndBetas}.

We only mention the key results here briefly and informally. First, the vertical oscillation coefficients of $\Omega$ are bounded from above by the ($L^{1}$-based) $\beta$-numbers of $\partial \Omega$ -- at least if $\partial \Omega$ is $3$-regular. This is the content of Corollary \ref{oscillationAndBetas}. Second, if $\partial \Omega$ is $3$-regular, and if the $\beta$-numbers associated to $\partial \Omega$ satisfy an \emph{$L^{p}$-Carleson packing condition}, see \eqref{carlesonBeta}, then the $L^{p}$-variant of the vertical perimeter of $\Omega$ inside balls $B(q,r)$, $q \in \partial \Omega$, is bounded by the usual (horizontal) perimeter of $\Omega$ in $B(q,r)$. This is Corollary \ref{verticalPerimeterCorollary}.

This result should be contrasted with the work of Naor and Young in higher dimensional Heisenberg groups: in \cite[Proposition 41]{NY}, they prove that if $\Omega \subset \He^{n}$, $n \geq 2$, is an intrinsic Lipschitz domain, then the $L^{2}$-vertical perimeter of $\Omega$ in balls centred at $\partial \Omega$ is automatically bounded by the horizontal perimeter -- without any reference to $\beta$-numbers. Then, at the very end of \cite{NY}, see also \cite[Remark 4]{NY}, the authors mention showing in a forthcoming paper \cite{NY2} that a similar inequality fails for the $L^{2}$-vertical perimeter in $\He^1=\He$, but holds for the $L^{p}$-vertical perimeter for some $p > 2$ (specifically, the authors mention $p = 4$). If this is the case, then, according to Corollary \ref{verticalPerimeterCorollary}, one cannot expect the $\beta$-numbers of intrinsic Lipschitz graphs to satisfy an $L^{2}$-Carleson packing condition. This is in contrast to the situation in $\R^{n}$, where the $\beta$-numbers on Lipschitz graphs do satisfy an $L^{2}$-Carleson packing condition, see \cite[(C3)]{DS1}. 

\section{Preliminaries}

In this section, we collect essential notions related to the algebraic and the metric structure of the first Heisenberg group $\He$, and we recall the definition and basic properties of intrinsic Lipschitz graphs over vertical planes in $\He$. For a more thorough introduction to these subjects, we refer the reader to \cite{MR2312336,MR3587666} and the references therein.

\subsection{Right and left invariant vector fields}\label{s:vector_fields} Recall from the introduction that $X$ and $Y$ denote the standard left invariant vector fields on $\He$ defined in \eqref{eq:LeftInvVfd}. We will also work with their  right invariant counterparts
\begin{displaymath} \widetilde{X} = \partial_{x} + \tfrac{y}{2}\partial_{t} \quad \text{and} \quad \widetilde{Y} = \partial_{y} - \tfrac{x}{2}\partial_{t}. \end{displaymath}
We define the {left and right (horizontal) gradients} of $\phi \in \mathcal{C}^{1}(\R^{3})$ as the $2$-vectors
\begin{displaymath} \nabla_{\He}\phi = (X\phi,Y\phi) \quad \text{and} \quad \widetilde{\nabla}_{\He}\phi = (\widetilde{X}\phi,\widetilde{Y}\phi). \end{displaymath}
For $V = (V_{1},V_{2}) \in \mathcal{C}^{1}(\R^{3},\R^{2})$, we define the {left and right divergences} as the functions
\begin{displaymath} \Div V := XV_{1} + YV_{2} \in \mathcal{C}^{0}(\R^{3}) \quad \text{and} \quad \widetilde{\Div} V := \widetilde{X}V_{1} + \widetilde{Y}V_{2} \in \mathcal{C}^{0}(\R^{3}). \end{displaymath}
For $V,W \in \mathcal{C}^{1}(\R^{3},\R^{2})$, we define the "inner product"
\begin{displaymath} \langle V,W \rangle := V_{1}W_{1} + V_{2}W_{2} \in \mathcal{C}^{1}(\R^{3}). \end{displaymath}
Finally, we denote the left and right sub-Laplacians as
\begin{displaymath} \bigtriangleup_{\He} := XX + YY \quad \text{and} \quad \widetilde{\bigtriangleup_{\He}} := \widetilde{X}\widetilde{X} + \widetilde{Y}\widetilde{Y}. \end{displaymath}

\subsection{Metric structure}
Various left invariant distance functions on $\He$ are commonly used in the literature, for instance the standard {sub-Riemannian distance} or the {Kor\'{a}nyi metric} given in \eqref{eq:Kor}. The choice of metric that we are going to use in the following is motivated by the divergence theorem (Theorem \ref{divergenceTheorem}), which holds for the spherical Hausdorff measure $\mathcal{S}^3$ with respect to the metric
\begin{equation}\label{eq:metric}
d:\He \times \He \to [0,+\infty),\quad d(p,q):= \|q^{-1}\cdot p\|,
\end{equation}
where
\begin{displaymath}
\|(x,y,t)\|:= \max\{|(x,y)|, 2 \sqrt{|t|}\}.
\end{displaymath}
However, every left invariant metric on $\He$ that is continuous with respect the Euclidean topology on $\mathbb{R}^3$ and  homogeneous with respect to the one-parameter family of \emph{Heisenberg dilations} $(\delta_{\lambda})_{\lambda>0}$
\begin{displaymath}
\delta_{\lambda}:\He \to \He,\quad \delta_{\lambda}(x,y,t):=(\lambda x, \lambda y,\lambda^2 t)
\end{displaymath}
is bi-Lipschitz equivalent to the metric $d$; this applies in particular to the Kor\'{a}nyi distance $d_{Kor}$. Unless otherwise stated, all metric concepts such as balls $B(p,r)$, diameters, and Hausdorff measures will be defined using the metric $d$.


\subsection{Intrinsic Lipschitz graphs}\label{ss:intrLip}
Let $\mathbb{W}$ be a vertical subgroup with complementary horizontal subgroup $\mathbb{V}$. Any point $p\in \He$ can be written as $p= w \cdot v$ for uniquely given $w\in \W$ and $v\in \V$. We write $w=:\pi_{\W}(p)$ and call it the \emph{vertical projection of $p$ to $\W$}; similarly, we denote the \emph{horizontal projection} by $v=\pi_{\V}(p)$.
These projections have been studied in connection with uniform rectifiability problems in the Heisenberg group, see for example \cite{CFO,2018arXiv180304819F}.

\begin{definition}
A function $\phi: \W \to \V$ is \emph{intrinsic $L$-Lipschitz} if
\begin{equation}\label{eq:intrLip}
\|\pi_{\V}\left(\Phi(w')^{-1}\Phi(w)\right)\|\leq L \left\|\pi_{\mathbb{W}}\left(\Phi(w')^{-1}\Phi(w)\right) \right\|,\quad\text{for all }w,w'\in\mathbb{W},
\end{equation}
where $\Phi:\mathbb{W}\to \He$ denotes the \emph{graph map} $\Phi(w)=w\cdot \phi(w)$. The \emph{intrinsic graph} of $\phi$ is
\begin{displaymath}
\Gamma_{\phi}:= \{w\cdot \phi(w):\; w\in \W\}=\Phi(\W).
\end{displaymath}
\end{definition}
The term "intrinsic" refers to the fact that if $\phi$ is an intrinsic $L$-Lipschitz function, then, for all $p\in \He$ and $r>0$, also $\tau_p(\delta_r(\Gamma_{\phi}))$ is an intrinsic graph of an intrinsic $L$-Lipschitz function. According to \cite[Remark 2.6]{CFO}, an intrinsic $L$-Lipschitz graph over an arbitrary vertical plane can be mapped to an intrinsic $L$-Lipschitz graph over the $(y,t)$-plane by an isometry of the form
\begin{displaymath}
R_{\theta}:\He \to \He,\quad R_{\theta}(x,y,t) := (x\cos \theta  + y\sin \theta , -x\sin \theta  +y\cos \theta , t).
\end{displaymath}
Since moreover the (complexified) kernel of the Heisenberg Riesz transform satisfies
\begin{displaymath}
(XG- i YG)\circ R_{\theta} = e^{i\theta} (XG - i YG),
\end{displaymath}
we may without loss of generality assume in the following that $\W$ is the $(y,t)$-plane and $\V$ is the $x$-axis. For this choice, we have
\begin{displaymath}
\pi_{\V}(x,y,t)=(x,0,0)\quad \text{and} \quad \pi_{\mathbb{W}}(x,y,t) = \left(0,y,t+\tfrac{1}{2}xy\right),\quad\text{for all }(x,y,t)\in \He.
\end{displaymath}
Moreover, the map $(x,0,0) \mapsto x$, provides an isometric isomorphism between $(\V,\cdot,d)$ and $(\R,+,|\cdot|)$, and under this identification of $\V$ with $\R$, the intrinsic Lipschitz condition \eqref{eq:intrLip} is equivalent to
\begin{displaymath}
|\phi(0,y,t)-\phi(0,y',t')| \leq L \left\|\pi_{\mathbb{W}}\left(\Phi(0,y',t')^{-1}\Phi(0,y,t)\right) \right\|,\quad\text{for all }(y,t),(y',t')\in \R^2.
\end{displaymath}
The subgroup $(\W,\cdot)$ is isomorphic to $(\R^2,+)$, and the map $(0,y,t)\mapsto (y,t)$ pushes the measure $\mathcal{H}^3|_{\W}$ forward to $c\mathcal{L}^2$ on $\R^2$, for a constant $0<c<\infty$. As mentioned in the introduction, an intrinsic Lipschitz function $\phi: \W \to \V$ possesses an \emph{intrinsic gradient} $\nabla^{\phi}\phi$ at $\mathcal{H}^3$ almost every point of $\W$.
In analogy with the behavior of Euclidean Lipschitz functions, if $\phi: \W \to \V$ is intrinsic Lipschitz, then
\begin{displaymath} \|\nabla^{\phi}\phi\|_{L^{\infty}(\calH^3|_{\W})} < \infty, \end{displaymath}
by \cite[Proposition 4.4]{MR3168633}. More information about intrinsic gradients is collected for instance in \cite{MR3587666} and in \cite[Section 4.2]{CFO}.

\section{Vertical oscillation coefficients}\label{vosc}

In this section, we define and study the main new concept of the paper, the \emph{vertical oscillation coefficients}. These coefficients are derived from the recent notion of \emph{vertical perimeter}, due to Lafforgue and Naor \cite[Definition 4.2]{MR3273443} (see also \cite[(28)]{NY}):
\begin{definition}[Vertical perimeter] Let $\Omega,U \subset \He$ be Lebesgue measurable sets, and let $s > 0$ be a scale. The \emph{vertical perimeter of $\Omega$ relative to $U$ at scale $s$} is the quantity
\begin{displaymath} \w_{\Omega}(U)(s) := \int_{U} |\chi_{\Omega}(p) - \chi_{\Omega}(p \cdot (0,0,s^{2}))| \, dp. \end{displaymath}
\end{definition}
Here and in the following, $dp$ refers to integration with respect to  Lebesgue measure $\mathcal{L}^3$ on $\mathbb{R}^3$, which agrees up to a multiplicative constant with $\mathcal{H}^4$.
\begin{remark}\label{vPerRemark} Having first defined the vertical perimeter $\w_{\Omega}(U)(s)$ at a fixed scale $s > 0$, Lafforgue and Naor \cite[(70)]{MR3273443} and Naor and Young \cite[Section 2.2]{NY} proceed to define the \emph{$L^{2}$-vertical perimeter of $\Omega$} as the $L^{2}(ds/s)$-norm of the function $s \mapsto \w_{\Omega}(\He)/s$. More generally, for $p \geq 1$ and an open set $U \subset \He$, one can consider (as in \cite[(68)]{NY} for example)  the \emph{$L^{p}$-vertical perimeter of $\Omega$ in $U$}:
\begin{displaymath} \wp_{\Omega,p}(U) := \left\|s \mapsto \frac{\w_{\Omega}(U)(s)}{s}\right\|_{L^{p}(ds/s)} = \left(\int_{0}^{\infty} \left(\frac{\w_{\Omega}(U)(s)}{s} \right)^{p} \, \frac{ds}{s} \right)^{1/p}. \end{displaymath}
It would be interesting to know if the $L^{p}$-vertical perimeter of $\Omega$ -- for some $p \geq 1$, and for an intrinsic Lipschitz domain $\Omega$, say -- can be related to the boundedness of the Heisenberg Riesz transform on $L^{2}(\calH^{3}|_{\partial \Omega})$.
\end{remark}

We now define the vertical oscillation coefficients:
\begin{definition}[Vertical oscillation coefficients]\label{oscillationCoeff} Let $\Omega \subset \He$ be a Lebesgue measurable (typically open) set, and let $B(p,r) \subset \He$ be a ball. We define
\begin{displaymath} \osc_{\Omega}(B(p,r)) := \fint_{0}^{r} \frac{\w_{\Omega}(B(p,r))(s)}{r^{4}} \, ds.  \end{displaymath}
\end{definition}

We examine the basic properties of the oscillation coefficients in the next lemma:
\begin{lemma}\label{basicProp} There is an absolute constant $C \geq 1$ such that $\osc_{\Omega}(B(p,r)) \leq C$ for all Lebesgue measurable sets $\Omega \subset \He$, and all balls $B(p,r) \subset \He$. The vertical oscillation coefficients are approximately monotone in the following sense: if $B(p_{1},r_{1}) \subset B(p_{2},r_{2}) \subset \He$ are two balls with $r_{2} \leq C_{1}r_{1}$, then
\begin{equation}\label{approxMonotone} \osc_{\Omega}(B(p_{1},r_{1})) \lesssim_{C_{1}} \osc_{\Omega}(B(p_{2},r_{2})). \end{equation}
Finally, the vertical oscillation coefficients are invariant with respect to dilations and left translations in the following sense:
\begin{equation}\label{form1} \osc_{\delta_{t}(q \cdot \Omega)}(B(\delta_{t}(q \cdot p),tr)) = \osc_{\Omega}(B(p,r)), \qquad t > 0, \; q \in \He. \end{equation}
\end{lemma}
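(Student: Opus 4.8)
The plan is to verify the three assertions of Lemma \ref{basicProp} directly from Definition \ref{oscillationCoeff}, exploiting the change-of-variables behaviour of Lebesgue measure under the Heisenberg dilations and left translations.

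\textbf{Uniform upper bound.} For the estimate $\osc_{\Omega}(B(p,r)) \leq C$, I would note that the integrand in the definition of $\w_{\Omega}(B(p,r))(s)$ is bounded pointwise by $\chi_{B(p,r)}(p)$, so that $\w_{\Omega}(B(p,r))(s) \leq \mathcal{L}^{3}(B(p,r)) \lesssim r^{4}$, using that $\He$ with the metric $d$ is Ahlfors $4$-regular with respect to $\mathcal{L}^{3}$. Then
\begin{displaymath} \osc_{\Omega}(B(p,r)) = \fint_{0}^{r} \frac{\w_{\Omega}(B(p,r))(s)}{r^{4}} \, ds \lesssim \fint_{0}^{r} \frac{r^{4}}{r^{4}} \, ds = 1, \end{displaymath}
which gives the claim with an absolute constant.

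\textbf{Approximate monotonicity.} Suppose $B(p_{1},r_{1}) \subset B(p_{2},r_{2})$ with $r_{2} \leq C_{1} r_{1}$. Since the integrand defining $\w_{\Omega}$ is nonnegative and $B(p_{1},r_{1}) \subset B(p_{2},r_{2})$, we have $\w_{\Omega}(B(p_{1},r_{1}))(s) \leq \w_{\Omega}(B(p_{2},r_{2}))(s)$ for every $s > 0$; moreover $r_{1} \leq r_{2}$, so the average over $[0,r_{1}]$ of a nonnegative function is comparable (up to the factor $r_{2}/r_{1} \leq C_{1}$) to the average over $[0,r_{2}]$ after we also account for the $r^{4}$ normalisation. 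Concretely,
\begin{displaymath} \osc_{\Omega}(B(p_{1},r_{1})) = \frac{1}{r_{1}^{5}}\int_{0}^{r_{1}} \w_{\Omega}(B(p_{1},r_{1}))(s)\,ds \leq \frac{1}{r_{1}^{5}}\int_{0}^{r_{2}} \w_{\Omega}(B(p_{2},r_{2}))(s)\,ds = \frac{r_{2}^{5}}{r_{1}^{5}}\,\osc_{\Omega}(B(p_{2},r_{2})) \leq C_{1}^{5}\,\osc_{\Omega}(B(p_{2},r_{2})), \end{displaymath}
which is \eqref{approxMonotone}.

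\textbf{Dilation and translation invariance.} This is the computational heart of the lemma, though still routine. Fix $t > 0$ and $q \in \He$, and let $\Omega' := \delta_{t}(q \cdot \Omega)$, $p' := \delta_{t}(q \cdot p)$, $r' := tr$. In the integral $\w_{\Omega'}(B(p',r'))(s')$ I would substitute $p'' = \delta_{t}(q \cdot p)$ for the integration variable, using that left translation $q \cdot (\cdot)$ preserves $\mathcal{L}^{3}$ and the dilation $\delta_{t}$ scales it by $t^{4}$; one must also use that balls transform correctly, $\delta_{t}(q \cdot B(p,r)) = B(\delta_{t}(q\cdot p), tr)$, which follows from left invariance and $1$-homogeneity of $d$ under $\delta_{t}$, and that the "vertical increment" point $(0,0,s^{2})$ transforms as $\delta_{t}(0,0,s^{2}) = (0,0,(ts)^{2})$ together with the fact that right multiplication by a central element commutes with left translations. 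Carrying out the change of variables $s' = ts$ in the outer integral then produces exactly the factor $t^{4}$ in the numerator cancelling against $(r')^{4} = t^{4}r^{4}$, and the $ds'$ versus $ds$ Jacobian $t$ cancels against the rescaled averaging interval length. The main obstacle — such as it is — is bookkeeping: one must check that right translation by $(0,0,s^{2})$ interacts correctly with both $\delta_{t}$ and the left translation $q \cdot (\cdot)$, i.e. that $\delta_{t}(q \cdot (p \cdot (0,0,s^2))) = (\delta_{t}(q\cdot p)) \cdot (0,0,(ts)^{2})$, which holds because $(0,0,s^2)$ is central and $\delta_t$ is a group homomorphism. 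Assembling these identities gives \eqref{form1}.
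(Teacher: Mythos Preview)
Your proposal is correct and follows essentially the same approach as the paper's proof: the trivial bound $\w_{\Omega}(B(p,r))(s)\lesssim r^{4}$ for the uniform estimate, the pointwise monotonicity $\w_{\Omega}(B(p_{1},r_{1}))(s)\leq \w_{\Omega}(B(p_{2},r_{2}))(s)$ for the approximate monotonicity, and the change of variables under $\delta_{t}$ and $\tau_{q}$ for the invariance. Your version is in fact slightly more explicit, giving the constant $C_{1}^{5}$ and spelling out the key algebraic identity $\delta_{t}(q\cdot(p\cdot(0,0,s^{2})))=(\delta_{t}(q\cdot p))\cdot(0,0,(ts)^{2})$ via centrality of $(0,0,s^{2})$ and the homomorphism property of $\delta_{t}$.
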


\begin{proof} To prove the first claim, observe that $\w_{\Omega}(B(p,r))(s) \leq 2\calH^{4}(B(p,r)) \sim r^{4}$ for all $0 \leq s \leq r$, so
\begin{displaymath} \osc_{\Omega}(B(p,r)) \lesssim \fint_{0}^{r} \frac{r^{4}}{r^{4}} \, ds = 1. \end{displaymath}
The approximate monotonicity property \eqref{approxMonotone} follows immediately from the inequality $\w_{\Omega}(B(p_{1},r_{1}))(s) \leq \w_{\Omega}(B(p_{2},r_{2}))(s)$, valid for all $s > 0$.

The left-invariance $\osc_{q \cdot \Omega}(B(q \cdot p,r)) = \osc_{\Omega}(B(p,r))$ of the vertical oscillation coefficients follows from the evident left-invariance of the vertical perimeter, so we assume that $p = q = 0$ and prove that
\begin{displaymath} \osc_{\delta_{t}(\Omega)}(B(0,tr)) = \osc_{\Omega}(B(0,r)), \qquad t > 0. \end{displaymath}
To see this, we start by expanding
\begin{align*} \osc_{\delta_{t}(\Omega)}(B(0,tr)) & = \frac{1}{(tr)^{5}} \int_{0}^{tr} \w_{\delta_{t}(\Omega)}(B(0,tr))(s) \, ds\\
& = \frac{1}{(tr)^{5}} \int_{0}^{tr} \int_{B(0,tr)} |\chi_{\delta_{t}(\Omega)}(p) - \chi_{\delta_{t}(\Omega)}(p \cdot (0,0,s^{2}))| \, dp \, ds \end{align*}
Then, we make the change of variables $p \mapsto \delta_{t}(q)$, and finally $s \mapsto ut$:
\begin{displaymath} \osc_{\delta_{t}(\Omega)}(B(0,tr)) = \frac{1}{r^{5}} \int_{0}^{r} \int_{B(0,r)} |\chi_{\Omega}(q) - \chi_{\Omega}(q \cdot (0,0,u^{2}))| \, dq \, du = \osc_{\Omega}(B(0,r)). \end{displaymath}
This completes the proof. \end{proof}

\begin{remark} The previous lemma says that $\osc_{\Omega}(B(p,r)) \lesssim 1$ no matter what $\Omega$ looks like. If $\Omega$ is the sub- or super-graph of an intrinsic Lipschitz function satisfying better than $\tfrac{1}{2}$-H\"older regularity in the vertical direction, then the oscillation coefficients of $\Omega$ have geometric decay. A more precise statement can be found in Lemma \ref{holderOscillation}.  \end{remark}

In connection with singular integrals, the vertical oscillation coefficients will enter through the next lemma:
\begin{lemma}\label{mainLemma} Let $\Omega \subset \He$ be a Lebesgue measurable set. Let $p \in \He$, $r > 0$, and let $\psi \in \mathcal{C}^{1}(\R^{3})$ with $\spt \psi \subset B(p,r)$. Then,
\begin{equation}\label{form2} \left| \frac{1}{r^{4}} \int_{\Omega}  \partial_{t} \psi(p) \, dp \right| \lesssim \|\partial_{t}\psi\|_{\infty}\osc_{\Omega}(B(p,10r)),  \end{equation}
where $\partial_{t}\psi$ is the derivative of $\psi$ with respect to the third variable.
\end{lemma}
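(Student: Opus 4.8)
The plan is to reduce to $p=0$, to record the elementary consequence of the support hypothesis that $\|\psi\|_\infty\lesssim r^2\|\partial_t\psi\|_\infty$, and then to combine two change-of-variables estimates by averaging over scales. For the reduction, note that $\partial_t$ commutes with left translations and that both $\int_\Omega\partial_t\psi$ and, by \eqref{form1}, $\osc_\Omega(B(p,10r))$ are unchanged if $\Omega$ and $\psi$ are simultaneously left-translated by $p^{-1}$; so we may assume $p=0$, in which case $\spt\psi\subset B(0,r)=\{(x,y,t):|(x,y)|<r,\ |t|<r^2/4\}$. Since the point $(x,y,-r^2/4)$ lies outside $B(0,r)$ we have $\psi(x,y,-r^2/4)=0$, and the fundamental theorem of calculus gives $|\psi(x,y,t)|=|\int_{-r^2/4}^{t}\partial_t\psi(x,y,u)\,du|\le\tfrac12 r^2\|\partial_t\psi\|_\infty$ for all $(x,y,t)$, hence $\|\psi\|_\infty\le\tfrac12 r^2\|\partial_t\psi\|_\infty$. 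This (at first sight surprising) gain is exactly what makes the norm $\|\partial_t\psi\|_\infty$ -- rather than the weaker $r^{-2}\|\psi\|_\infty$ -- admissible on the right-hand side of \eqref{form2}.

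For $w\ge0$ and $V>0$ set
\begin{displaymath}
F(w):=\int_\Omega\partial_t\psi(q\cdot(0,0,-w))\,dq\qquad\text{and}\qquad G(V):=\int_\Omega\bigl[\psi(q)-\psi(q\cdot(0,0,-V))\bigr]\,dq,
\end{displaymath}
noting that $q\cdot(0,0,-a)=(x,y,t-a)$, so that $F(0)=\int_\Omega\partial_t\psi$ is the quantity to be estimated and, by Fubini together with the fundamental theorem of calculus, $G(V)=\int_0^V F(w)\,dw$. The key point is that, applying the Lebesgue-measure-preserving substitution $q\mapsto q\cdot(0,0,a)$, both $F(w)-F(0)$ and $G(V)$ turn into integrals of $\partial_t\psi$, respectively $\psi$, against the difference $\chi_\Omega(\cdot)-\chi_\Omega(\cdot\,(0,0,a))$; since $\spt\psi\subset B(0,r)$ and trivially $\w_\Omega(B(0,r))(s)\le\w_\Omega(B(0,10r))(s)$ (the constant $10$ being mere slack), this yields for all $w,V\ge0$
\begin{displaymath}
|F(w)-F(0)|\le\|\partial_t\psi\|_\infty\,\w_\Omega(B(0,10r))(\sqrt w)\qquad\text{and}\qquad|G(V)|\le\|\psi\|_\infty\,\w_\Omega(B(0,10r))(\sqrt V).
\end{displaymath}

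To conclude, observe that $G(V)=\int_0^V F(w)\,dw=VF(0)+\int_0^V[F(w)-F(0)]\,dw$, whence $V\,|F(0)|\le|G(V)|+\int_0^V|F(0)-F(w)|\,dw$ for every $V>0$. Integrating this over $V\in(0,r^2)$ produces $\tfrac12 r^4|F(0)|$ on the left; on the right I apply the two displayed bounds, the substitution $V=s^2$ (so $dV=2s\,ds\le2r\,ds$), and the identity $\int_0^{10r}\w_\Omega(B(0,10r))(s)\,ds=(10r)^5\osc_\Omega(B(0,10r))$, which is merely the definition of the oscillation coefficient. This gives $\int_0^{r^2}|G(V)|\,dV\lesssim r^6\|\psi\|_\infty\osc_\Omega(B(0,10r))$ and, after bounding $\int_0^V(\cdot)\le\int_0^{r^2}(\cdot)$ and integrating out the outer variable, $\int_0^{r^2}\!\int_0^V|F(0)-F(w)|\,dw\,dV\lesssim r^8\|\partial_t\psi\|_\infty\osc_\Omega(B(0,10r))$. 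Substituting $\|\psi\|_\infty\le\tfrac12 r^2\|\partial_t\psi\|_\infty$ into the first estimate and dividing by $\tfrac12 r^4$ gives $\tfrac{1}{r^4}|\int_\Omega\partial_t\psi|\lesssim\|\partial_t\psi\|_\infty\osc_\Omega(B(0,10r))$, which is \eqref{form2}.

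I expect the only genuine difficulty to be the appearance of $\|\partial_t\psi\|_\infty$: the direct approach of writing $\partial_t\psi$ as the limit of the difference quotients $s^{-2}(\psi(\cdot\,(0,0,s^2))-\psi)$ produces $\|\psi\|_\infty$ times $s^{-2}\w_\Omega(\cdot)(s)$, which is uncontrolled as $s\to0$. The remedy is the two-sided device above -- $|G(V)|$ carries $\|\psi\|_\infty$ while $|F(0)-F(w)|$ carries $\|\partial_t\psi\|_\infty$, and the support estimate $\|\psi\|_\infty\lesssim r^2\|\partial_t\psi\|_\infty$ reconciles them -- together with the averaging over $V\in(0,r^2)$, which is what converts the isolated values $\w_\Omega(B(0,10r))(\sqrt V)$ into the integral defining $\osc_\Omega(B(0,10r))$.
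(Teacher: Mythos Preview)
Your proof is correct and follows a genuinely different route from the paper's argument. The paper reduces to $p=0$, $r=1$ (using both translation and dilation), then decomposes $\int_\Omega\partial_t\psi$ via Fubini over vertical lines $\ell$, exploits the cancellation $\int_\ell\partial_t\psi\,d\calH^1_E=0$ to subtract a reference value $\chi_\Omega(p')$ for $p'$ ranging over a box $Q=[-5,5]^2\times[-2,-1]\subset B(0,10)$ sitting below the support, and finally changes variables on each line to produce the vertical-perimeter integrand. Your argument instead stays at general scale $r$, replaces the line-by-line cancellation with the equivalent pointwise estimate $\|\psi\|_\infty\le\tfrac12 r^2\|\partial_t\psi\|_\infty$, and packages the two shift estimates into the auxiliary functions $F$ and $G$ related by $G(V)=\int_0^V F$. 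The averaging over $V\in(0,r^2)$ plays the same role as the paper's averaging over $p'\in\ell\cap Q$ followed by the change of variables $q\mapsto p'\cdot(0,0,s)$. Your version is arguably more streamlined (no auxiliary geometric set $Q$, no separate dilation reduction); the paper's version makes the vertical foliation structure more visible. Both rest on the same two ingredients --- cancellation of $\partial_t\psi$ along vertical lines, and an average over vertical shifts that manufactures the oscillation coefficient --- just organised differently. As you observe, the factor $10$ is inessential in your argument: you could in fact conclude with $\osc_\Omega(B(p,r))$ on the right.
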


\begin{proof} We start by reducing to the case $B(p,r) = B(0,1)$. So, assume that \eqref{form2} holds for every Lebesgue measurable set $\Omega$ and all $\psi \in \mathcal{C}^{1}(\R^{3})$ with $\spt \psi \subset B(0,1)$ and with $\osc_{\Omega}(B(0,10))$ on the right hand side. Then, if $\psi \in \mathcal{C}^{1}(\R^{3})$ with $\spt \psi \subset B(p,r)$, we consider the function $\psi_{p,r} = \psi \circ \tau_{p} \circ \delta_{r} \in \mathcal{C}^{1}(\R^{3})$ with $\spt \psi_{p,r} \subset B(0,1)$. It follows that
\begin{align*} \left|\frac{1}{r^{4}} \int_{\Omega} \partial_{t} \psi(q) \, dq \right| & = \left| \int_{\delta_{1/r}(p^{-1} \cdot \Omega)} \partial_{t} \psi (\delta_{r}(p \cdot q)) \, dq \right| = \left| \int_{\delta_{1/r}(p^{-1} \cdot \Omega)} r^{-2} \partial_{t}\psi_{p,r}(q) \, dq \right|\\
& \lesssim \frac{\|\partial_{t} \psi_{p,r} \|_{\infty}}{r^{2}} \osc_{\delta_{1/r}(p^{-1} \cdot \Omega)}(B(0,10))\\
& = \|\partial_{t} \psi\|_{\infty} \osc_{\Omega}(B(p,10r)), \end{align*}
using Lemma \ref{basicProp} in the last equation.

It remains to prove the case $B(p,r) = B(0,1)$, so fix $\psi \in \mathcal{C}^{1}(\R^{3})$ with $\spt \psi \subset B(0,1)$. By Fubini's theorem, we can write
\begin{equation}\label{form4} \int_{\Omega} \partial_{t} \psi(q) \, dq = \int_{\mathcal{L}} \int_{\ell} \partial_{t} \psi(q)\chi_{\Omega}(q) \, d\calH^{1}_{E}(q) \, d\eta(\ell), \end{equation}
where $\mathcal{L}$ stands for the collection of vertical lines, $\eta$ is two-dimensional Lebesgue measure on $\R^{2}$ (which is used to parametrise $\mathcal{L}$), and $\calH^{1}_{E}$ denotes the $1$-dimensional Hausdorff measure with respect to the Euclidean distance. Next, we note that if $\ell \in \mathcal{L}$ is a fixed line, then
\begin{equation}\label{form3} \int_{\ell} \partial_{t} \psi(q) \, d\calH^{1}_{E}(q) = 0. \end{equation}
Now, let $Q := [-5,5]^{2} \times [-2,-1] \subset B(0,10)$. Note that whenever $\ell \in \mathcal{L}$ is a line with non-zero contribution in \eqref{form4}, then $\ell \cap B(0,1) \neq \emptyset$, and in particular
\begin{displaymath} \mathcal{H}^{1}_{E}(\ell \cap Q) = 1. \end{displaymath}
Then, use \eqref{form4}-\eqref{form3} to write
\begin{align*} \left|\int_{\Omega} \partial_{t} \psi(q) \, dq \right| & = \left| \int_{\mathcal{L}} \int_{\ell \cap Q} \int_{\ell} \partial_{t}\psi(q)[\chi_{\Omega}(q) - \chi_{\Omega}(p)] \, d\calH^{1}_{E}(q) \, d\calH^{1}_{E}(p) \, d\eta(\ell) \right|\\
& \leq \|\partial_{t} \psi\|_{\infty} \int_{\mathcal{L}} \int_{\ell \cap Q} \int_{\ell \cap B(0,1)} |\chi_{\Omega}(q) - \chi_{\Omega}(p)| \, d\calH^{1}_{E}(q) \, d\calH^{1}_{E}(p) \, d\eta(\ell). \end{align*}
Next, for $\ell \in \mathcal{L}$ and $p \in \ell \cap Q$ fixed, we make the change of variable $q \mapsto p \cdot (0,0,s)$ in the innermost integral: since $q \in \ell \cap B(0,1)$ and $p \in \ell \cap Q$, we note that $s \in [0,3]$. This leads to
\begin{align*} \left|\int_{\Omega} \partial_{t} \psi(q) \, dq \right| & \leq \|\partial_{t} \psi\|_{\infty} \int_{\mathcal{L}} \int_{\ell \cap Q} \int_{0}^{3} |\chi_{\Omega}(p \cdot (0,0,s)) - \chi_{\Omega}(p)| \, ds \, d\calH^{1}_{E}(p) \, d\eta(\ell)\\
& \leq \|\partial_{t} \psi\|_{\infty} \int_{0}^{3} \int_{\mathcal{L}} \int_{\ell \cap B(0,10)} |\chi_{\Omega}(p \cdot (0,0,s)) - \chi_{\Omega}(p)| \, d\calH^{1}_{E}(p) \, d\eta(\ell) \, ds\\
& \lesssim \|\partial_{t}\psi\|_{\infty} \int_{0}^{\sqrt{3}} \w_{\Omega}(B(0,10))(s) \, ds \lesssim \|\partial_{t} \psi\|_{\infty} \osc_{\Omega}(B(0,10)).  \end{align*}
This completes the proof. \end{proof}

\subsection{Vertical oscillation vs. vertical $\beta$-numbers}\label{betaNumberSection} Given a set $E \subset \He$ and a ball $B(q,r) \subset \He$, we recall from \cite[Definition 3.3]{CFO} the following \emph{vertical $\beta$-number of $E$ in $B(q,r)$, $q\in E$}:
\begin{displaymath} \beta_{E,\infty}(B(q,r)) := \inf_{\W,z} \sup_{x \in B(q,r) \cap E} \frac{\dist(x,z \cdot \W)}{r}, \end{displaymath}
where the $\inf$ runs over all vertical subgroups $\W \subset \He$, and all points $z \in \He$. More generally, one can consider the following $L^{p}$-variants:
\begin{displaymath} \beta_{E,p}(B(q,r)) := \inf_{\W,z} \left( \frac{1}{r^{3}} \int_{B(q,r)\cap E} \left( \frac{\dist(x,z \cdot \W)}{r} \right)^{p} \, d\calH^{3}(x) \right)^{1/p}, \qquad 1 \leq p < \infty, \end{displaymath}
assuming that $E$ has locally finite $3$-dimensional measure. If $E$ happens to be $3$-regular, then the $\beta_{E,p}$-numbers are essentially monotone in $p$:
\begin{displaymath} \beta_{E,p_{1}}(B(q,r)) \lesssim \beta_{E,p_{2}}(B(q,r)), \qquad q \in E, \: 1 \leq p_{1} \leq p_{2} \leq \infty. \end{displaymath}
The next theorem shows that the vertical oscillation coefficients of $\Omega$ are always bounded by the $\beta_{E,\infty}$-numbers of $\partial \Omega$, and also "almost" bounded from above by the $\beta_{E,1}$-numbers of $\partial \Omega$. After this statement concerning general domains $\Omega$, we will give a corollary to domains with $3$-regular boundaries: in this case the word "almost" above can be omitted.

\begin{thm}\label{oscillationBetaComparison} Let $\Omega \subset \He$ be an open set such that $\partial \Omega$ has locally finite $3$-dimensional measure, and let $p \in \partial \Omega$ and $r > 0$. Then, for any
$p \in \partial \Omega$, and $0 < s \leq r$,
\begin{equation}\label{form17} \frac{\w_{\Omega}(B(p,r))(s)}{r^{4}} \lesssim_{\epsilon} \inf_{\W,z} \left[ \frac{1}{r^{3}} \int_{B(p,12r) \cap \partial \Omega} \frac{d(q,z \cdot \W)}{12r} \, d\calH^{3}(q) + \epsilon\left(\sup_{q \in B(p,12r) \cap \partial \Omega} \frac{d(q,z \cdot \W)}{12r} \right) \right] \end{equation}
for any non-decreasing function $\epsilon \colon \R_{+} \to \R_{+}$ such that $\epsilon(\delta) \to 0$ as $\delta \to 0$.
\end{thm}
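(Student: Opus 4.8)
\emph{Plan.} The strategy is a Fubini argument over the vertical lines meeting $B(p,r)$, together with a dichotomy that charges the ``small'' vertical oscillations of $\Omega$ along those lines to the $L^{1}$-type $\beta$-integral, and the ``large'' (but necessarily few) ones to the term carrying $\epsilon$. By the dilation and left-translation invariance from Lemma \ref{basicProp}, and after a rotation $R_{\theta}$, it suffices to treat $B(p,r)=B(0,1)$ with the near-minimising vertical subgroup and point equal to $z\cdot\W=\{x=0\}$; then $\partial\Omega\cap B(0,12)\subset\{|x|\le 12\beta_{\infty}\}$, where $\beta_{\infty}$ denotes the second term in the bracket and $\beta_{1}$ the first. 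One may also assume $\beta_{\infty}$ is below an absolute constant, since otherwise the claim is immediate from Lemma \ref{basicProp} ($\w_{\Omega}(B(0,1))(s)\lesssim 1$) and the fact that then $\epsilon(\beta_{\infty})$ is bounded below in terms of $\epsilon$.

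\emph{Fubini and a pointwise fibre estimate.} With $\pi(x,y,t)=(x,y)$ and $\ell_{v}=\pi^{-1}(v)$, Fubini writes $\w_{\Omega}(B(0,1))(s)$ as $\int_{\R^{2}}\big(\int_{\ell_{v}\cap B(0,1)}|\chi_{\Omega}(q)-\chi_{\Omega}(q\cdot(0,0,s^{2}))|\,d\calH^{1}_{E}(q)\big)\,d\eta(v)$. Since $s^{2}\le 1$, a nonzero contribution of $v$ forces the vertical segment in question to meet $\partial\Omega\cap B(0,3)\subset\{|x|\le 12\beta_{\infty}\}$, hence $|v_{x}|<12\beta_{\infty}$; so only a set of lines of $\eta$-measure $\lesssim\beta_{\infty}$ is relevant. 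For such $v$ the set $A_{v}:=\{t:(v,t)\in\Omega\}$ is open; writing $A_{v}=\bigsqcup_{j}(a_{j},b_{j})$ and using $|\chi_{A_{v}}(t)-\chi_{A_{v}}(t+h)|\le\sum_{j}|\chi_{(a_{j},b_{j})}(t)-\chi_{(a_{j},b_{j})}(t+h)|$ with $\int_{\R}|\chi_{(a,b)}(t)-\chi_{(a,b)}(t+h)|\,dt=2\min\{h,b-a\}$, the inner integral is at most $2\sum_{j\in J_{v}}\min\{s^{2},b_{j}-a_{j}\}$, the sum over components meeting the relevant $t$-range. (One may symmetrise by also estimating with the components of the open set $\R^{3}\setminus\overline{\Omega}$, since both have boundary $\partial\Omega$; this sharpening is what makes ``thin teeth'' configurations work out.)

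\emph{The dichotomy and the main estimate.} Fix a threshold $\lambda\in(0,1]$ and split $J_{v}$ into \emph{long} components ($b_{j}-a_{j}>\lambda$) and \emph{short} ones. The long components are disjoint intervals of length $>\lambda$ meeting a bounded interval, so there are $\lesssim\lambda^{-1}$ of them, each contributing $\le s^{2}\le 1$; integrating over the measure-$\lesssim\beta_{\infty}$ set of relevant lines bounds the long part by $\lesssim\beta_{\infty}/\lambda$. The short part is $\le\calL^{1}\big(\bigcup_{j\ \mathrm{short}}(a_{j},b_{j})\big)$, so after integration it is bounded by $\calL^{3}(\Omega_{\lambda})$, where $\Omega_{\lambda}$ is the set of $q\in\Omega\cap B(0,3)$ lying in a component of $\Omega\cap\ell_{\pi(q)}$ of length $\le\lambda$; each such $q$ sits within vertical distance $\lambda$ above a point of $\partial\Omega\cap B(0,5)$, and $\Omega_{\lambda}\subset\{|x|<12\beta_{\infty}\}$. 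The heart of the matter is to show $\calL^{3}(\Omega_{\lambda})\lesssim\beta_{1}+(\text{a fixed power of }\beta_{\infty})$: one splits $\partial\Omega\cap B(0,5)$ at an auxiliary $x$-threshold $\mu$; the near part $\{|x|\le\mu\}$ has vertical $\lambda$-thickening of volume $\lesssim\mu$, while the far part has $\calH^{3}$-measure $\lesssim\beta_{1}/\mu$ by Chebyshev, and the volume of its vertical $\lambda$-thickening is estimated by a Besicovitch covering at scale $\sqrt{\lambda}$, exploiting that at a positive distance from the plane the Heisenberg $3$-measure, the Euclidean surface measure, and the $t$-width of Heisenberg balls are comparable, while the openness of $\Omega$ keeps the covering multiplicity bounded. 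Combining, $\w_{\Omega}(B(0,1))(s)\lesssim\beta_{1}+\beta_{\infty}/\lambda+\mu+(\dots)$; choosing $\lambda,\mu$ as suitable powers of $\beta_{\infty}$ and then comparing the residual fixed power of $\beta_{\infty}$ with the arbitrary function $\epsilon$ — separating off the range where $\beta_{\infty}$ exceeds an $\epsilon$-dependent constant, where the trivial bound suffices — yields the asserted inequality. The step I expect to be the genuine obstacle is exactly this control of the vertical thickening of the ``far'' part of $\partial\Omega$ by its $\calH^{3}$-measure: it is where the discrepancy between the Heisenberg $3$-measure and a naive Euclidean surface count must be reconciled, and it is the reason for the slack built into the $\epsilon$-term.
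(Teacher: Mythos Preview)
Your approach is genuinely different from the paper's, and the step you flag as ``the genuine obstacle'' is indeed a real gap that your sketch does not close.

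\textbf{Where the gap is.} After Chebyshev gives $\calH^{3}(F)\lesssim\beta_{1}/\mu$ for the far part $F=\partial\Omega\cap\{|x|>\mu\}\cap B(0,5)$, you need $\calL^{3}$ of the vertical $\lambda$-thickening of $F$ to be controlled by $\calH^{3}(F)$ times something harmless. Your Besicovitch idea at scale $\sqrt{\lambda}$ bounds the \emph{multiplicity} of the cover, not the \emph{number} of balls; to bound the number you would need a lower bound $\calH^{3}(\partial\Omega\cap B(q,\sqrt{\lambda}))\gtrsim\lambda^{3/2}$, i.e.\ $3$-regularity, which the theorem does not assume (that assumption only enters in the subsequent corollary). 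Without it, $F$ can consist of many pieces of Heisenberg diameter $\rho\ll\sqrt{\lambda}$: each has $\calH^{3}$ mass $\sim\rho^{3}$ but $xy$-shadow of area $\sim\rho^{2}$ and hence vertical $\lambda$-thickening of volume $\sim\rho^{2}\lambda$; the ratio $\lambda/\rho$ is unbounded. The remark that ``openness of $\Omega$ keeps the covering multiplicity bounded'' does not help here, and the comparability of Heisenberg $\calH^{3}$ with Euclidean surface measure ``away from the plane'' is not what fails---the failure is that the projection to the $xy$-plane can massively \emph{expand} $\calH^{3}$.

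\textbf{What the paper does instead.} The paper avoids vertical lines altogether. First, a one-line lemma reduces the question to bounding $\calH^{4}([\Omega\bigtriangleup H]\cap B(0,3))$ for a suitably chosen vertical half-space $H$ (since vertical translation fixes $\chi_{H}$). It then fibres by the \emph{horizontal} lines $\ell_{w}=w\cdot\{(x,0,0)\}$, $w\in P$, transversal to $P$. On each half-fibre, after the correct choice of $H$ one has $[\Omega\bigtriangleup H]\cap\ell_{w,+}\cap B(0,2)\subset[w,p_{w,+}]$, where $p_{w,+}$ is the farthest point of $\partial[\Omega\bigtriangleup H]$ on $\ell_{w,+}$; so the fibre length is exactly $h_{+}(w)=d(p_{w,+},P)$. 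Integrating over $w\in P$ and pushing forward by $w\mapsto p_{w,+}$, the key geometric fact is that the \emph{vertical projection} $\pi_{P}$ satisfies $\calH^{3}(\pi_{P}(A))\lesssim\calH^{3}(A)$; hence the pushforward measure is dominated by $\calH^{3}|_{\partial\Omega\cup P}$, giving immediately
\[
\int_{P\cap B(0,2)} h_{+}(w)\,dw\ \lesssim\ \int_{\partial\Omega\cap B(0,4)} d(q,P)\,d\calH^{3}(q)\ \sim\ \beta_{1},
\]
with no $\epsilon$ slack needed in the main estimate. The $\epsilon$-term in the theorem is used only to dispose of the trivial regime $\beta_{\infty}\gtrsim 1$.

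\textbf{Summary.} The decisive difference is the direction of the fibration: projecting along horizontal lines onto $P$ is $\calH^{3}$-contracting (this is the lemma $\calH^{3}(\pi_{P}(A))\lesssim\calH^{3}(A)$), whereas projecting along vertical lines onto the $xy$-plane is not---which is exactly the obstruction your sketch runs into. If you want to rescue your route, you would at minimum need a substitute for $3$-regularity to count the covering balls; but the half-space comparison with horizontal fibres is both simpler and sharper.
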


The same estimate for the vertical oscillation coefficient $\osc_{\Omega}(B(p,r))$ follows immediately by taking the average over $s \in (0,r]$ on the left hand side; we will however need the sharper result later, in Section \ref{perimeterAndBetas}. Note also that the quantity on the right hand side of \eqref{form17} looks like
\begin{displaymath} \beta_{\partial \Omega,1}(B(p,12r)) + \epsilon[\beta_{\partial \Omega,\infty}(B(p,12r))], \end{displaymath}
but can be sometimes larger, as only one choice of $z,\W$ is made on the right hand side of \eqref{form17}. The quantities on both sides of the inequality \eqref{form17} are invariant under scaling and translation, so we may assume that $p = 0$ and $r = 1$. We start the proof with the following simple lemma:
\begin{lemma}\label{lemma1} Let $\Omega \subset \He$ be an open set. Let $H \subset \He$ be a vertical half-space, that is, a half-space bounded by the translate of some vertical subgroup. Then,
\begin{displaymath} \w_{\Omega}(B(0,1))(s) \leq 2\calH^{4}([\Omega \bigtriangleup H] \cap B(0,3)), \qquad 0 < s \leq 1. \end{displaymath}
\end{lemma}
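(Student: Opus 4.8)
The plan is to estimate the vertical perimeter integrand $|\chi_\Omega(p) - \chi_\Omega(p\cdot(0,0,s^2))|$ pointwise in terms of the indicator of the symmetric difference $\Omega \bigtriangleup H$, using the elementary fact that the vertical translate of a vertical half-space $H$ is again $H$. First I would observe that, since $H$ is bounded by a translate $z\cdot\W$ of a vertical subgroup, and since $(0,0,s^2)$ lies in every vertical subgroup (the $t$-axis is contained in every vertical plane), the set $H$ is invariant under right translation by $(0,0,s^2)$: that is, $\chi_H(p) = \chi_H(p\cdot(0,0,s^2))$ for all $p\in\He$ and all $s$. This is the one structural input; it is why the statement is about \emph{vertical} half-spaces specifically.

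Next I would use the triangle inequality for indicator functions. For any $p$,
\begin{displaymath}
|\chi_\Omega(p) - \chi_\Omega(p\cdot(0,0,s^2))| \leq |\chi_\Omega(p) - \chi_H(p)| + |\chi_H(p) - \chi_H(p\cdot(0,0,s^2))| + |\chi_H(p\cdot(0,0,s^2)) - \chi_\Omega(p\cdot(0,0,s^2))|.
\end{displaymath}
The middle term vanishes by the invariance just noted, and the two remaining terms are $\chi_{\Omega\bigtriangleup H}(p)$ and $\chi_{\Omega\bigtriangleup H}(p\cdot(0,0,s^2))$ respectively. Integrating over $p\in B(0,1)$ then gives
\begin{displaymath}
\w_\Omega(B(0,1))(s) \leq \int_{B(0,1)} \chi_{\Omega\bigtriangleup H}(p)\,dp + \int_{B(0,1)} \chi_{\Omega\bigtriangleup H}(p\cdot(0,0,s^2))\,dp.
\end{displaymath}
The first integral is $\calH^4((\Omega\bigtriangleup H)\cap B(0,1))$. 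For the second, I would change variables $q = p\cdot(0,0,s^2)$; since this is a right translation it preserves Lebesgue measure, and it maps $B(0,1)$ into $B(0,3)$ because $\|(0,0,s^2)\| = 2s \leq 2$ for $0 < s \leq 1$, so by left-invariance of the metric the image ball has radius at most $1 + 2 \leq 3$ (alternatively, $p\cdot(0,0,s^2)$ has Kor\'anyi/gauge norm at most $\|p\| + \|(0,0,s^2)\| \le 1 + 2$ up to the quasi-triangle constant, which one absorbs into the radius $3$). Hence the second integral is at most $\calH^4((\Omega\bigtriangleup H)\cap B(0,3))$, and since $B(0,1)\subset B(0,3)$ the first integral is bounded by the same quantity, yielding the claimed factor $2$.

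I do not expect any serious obstacle here; the only points requiring a little care are (i) verifying cleanly that right translation by $(0,0,s^2)$ fixes a vertical half-space set-theoretically — this reduces to checking that if $w\cdot v$ lies in $z\cdot\W$ then so does $w\cdot v\cdot(0,0,s^2)$, which follows because $(0,0,s^2)\in\W$ and $\W$ is a subgroup, so right multiplication by it preserves every left coset $z\cdot\W$ and hence each of the two half-spaces it bounds — and (ii) the bookkeeping on the radius, ensuring $B(0,1)\cdot(0,0,s^2)\subset B(0,3)$ for $s\le 1$, which is where the constant $3$ in the statement comes from. Both are routine once the invariance of $H$ is in hand.
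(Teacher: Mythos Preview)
Your proposal is correct and follows essentially the same approach as the paper: insert $\chi_H(p) - \chi_H(p\cdot(0,0,s^2)) = 0$ (using the vertical-translation invariance of a vertical half-space), apply the triangle inequality, and absorb both resulting terms into $2\,\calH^{4}([\Omega \bigtriangleup H]\cap B(0,3))$ via the change of variables $q = p\cdot(0,0,s^2)$ and the inclusion $B(0,1)\cdot(0,0,s^2)\subset B(0,3)$. The paper's proof is exactly this computation, written slightly more compactly.
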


\begin{proof} Let $0 \leq s \leq 1$. Note that $\chi_{H}(q) = \chi_{H}(q \cdot (0,0,s^{2}))$ for all $q \in \He$. Hence,
\begin{align*} \w_{\Omega}(B(0,1))(s) & \leq \int_{B(0,1)} |\chi_{\Omega}(q) - \chi_{H}(q) + \chi_{H}(q \cdot (0,0,s^{2})) - \chi_{\Omega}(q \cdot (0,0,s^{2}))| \, dq\\
& \leq 2\int_{B(0,3)} |\chi_{\Omega}(q) - \chi_{H}(q)| \, dq  = 2\calH^{4}([\Omega \bigtriangleup H] \cap B(0,3)). \end{align*}
This is the desired estimate. \end{proof}
Now, to conclude the proof of Theorem \ref{oscillationBetaComparison}, it suffices to show (after scaling $\Omega$ by $1/3$) that there exists a vertical half-space $H \subset \He$ such that
\begin{equation}\label{form18} \calH^{4}([\Omega \bigtriangleup H] \cap B(0,1)) \lesssim_{\epsilon}  \inf_{\W,z} \left[ \int_{B(0,4) \cap \partial \Omega} d(q,z \cdot \W) \, d\calH^{3}(q) + \epsilon\left(\sup_{q \in B(0,4) \cap \partial \Omega} d(q,z \cdot \W) \right) \right]. \end{equation}
Further, to prove \eqref{form18}, we may assume that if $P := z \cdot \W$ is a vertical plane minimising the right hand side in \eqref{form18}, then
\begin{equation}\label{form19} \sup_{q \in B(0,4) \cap \partial \Omega} d(q,P) \leq \delta := 10^{-10}. \end{equation}
Indeed, \eqref{form18} is clear if the converse of \eqref{form19} holds. In particular, since $0 = p \in \partial \Omega$, we see that $P$ is at distance at most $\delta$ to the $yt$-plane. For slight notational convenience, we will in fact assume that $P = \{(0,y,t) : y,t \in \R\}$. Now, under the assumption \eqref{form19}, we will actually show that there exists a vertical half-space $H \subset \He$ (not necessarily bounded by $P$) such that
\begin{equation}\label{form20} \calH^{4}([\Omega \bigtriangleup H] \cap B(0,1)) \lesssim \int_{B(0,4) \cap \partial \Omega} d(q,P) \, d\calH^{3}(q). \end{equation}
So, the $L^{1}$-based $\beta$-number of $\partial \Omega$ dominates the vertical oscillation of $\Omega$ under the \emph{a priori} assumption that the $L^{\infty}$-based $\beta$-number is sufficiently small. We now choose $H$. We denote the (closed) half-spaces bounded by $P$ by
\begin{displaymath} \He_{+} := \{(x,y,t) : x \geq 0\} \quad \text{and} \quad \He_{-} := \{(x,y,t) : x \leq 0\}. \end{displaymath}
Write $U_{+},U_{-}$ for the connected components of $B(0,4) \setminus P(\delta)$, where $P(\delta)$ is the closed $\delta$-neighbourhood of $P$, with
\begin{displaymath} U_{+} \subset \He_{+} \quad \text{and} \quad U_{-} \subset \He_{-}. \end{displaymath}
By \eqref{form19}, we may infer that either $U_{+} \subset \Omega$ or $U_{-} \cap \Omega = \emptyset$, and similarly either $U_{-} \subset \Omega$ or $U_{-} \cap \Omega = \emptyset$. The definition of $H$ depends on which of these cases occur:
\begin{itemize}
\item[(a)] If $U_{-} \subset \Omega$ and $U_{+} \cap \Omega = \emptyset$, let $H := \He_{-}$.
\item[(b)] If $U_{-} \cap \Omega = \emptyset$ and $U_{+} \subset \Omega$, let $H := \He_{+}$.
\item[(c)] If $U_{+},U_{-} \subset \Omega$, let $H$ be any vertical half-space containing $B(0,4)$.
\item[(d)] If $U_{+} \cap \Omega = \emptyset = U_{-} \cap \Omega$, let $H$ be any vertical half-space with $H \cap B(0,4) = \emptyset$.
\end{itemize}
The point of these choices is that always
\begin{equation}\label{form21} [\Omega \bigtriangleup H] \cap B(0,4) \subset P(\delta), \end{equation}
as one may easily verify.

\begin{figure}[h!]
\begin{center}
\includegraphics[scale = 0.3]{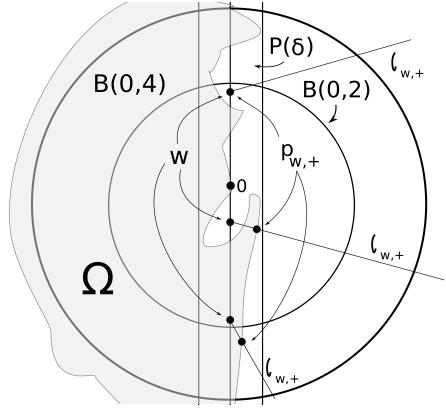}
\caption{Various concepts in the proof of Theorem \ref{oscillationBetaComparison}. Scenario (a) is depicted.}\label{fig1}
\end{center}
\end{figure}

We claim that \eqref{form20} holds for the choice of $H$ above. To see this, we need additional notation. For $w \in P$, let
\begin{displaymath} \ell_{w} := \{w \cdot (x,0,0) : x \in \R\} \end{displaymath}
be the left translate of the $x$-axis passing through $w$. We also define the half-lines
\begin{displaymath} \ell_{w,+} := \ell_{w} \cap \He_{+} \quad \text{and} \quad \ell_{w,-} := \ell_{w} \cap \He_{-}, \end{displaymath}
see Figure \ref{fig1}. To prove \eqref{form20}, we study separately the parts of $[\Omega \bigtriangleup H] \cap B(0,1)$ inside $\He_{-}$ and $\He_{+}$. These investigations are symmetrical, so we restrict attention to $\He_{+}$. For notational convenience, we write $B(0,s) \cap \He_{+} := B_{+}(0,s)$ in the sequel.
We will apply the general integration estimate
\begin{equation}\label{eq:Fubini}
\calH^4(A) \sim \int_P \calH^1(A\cap \ell_w) \, dw,\quad A\subset \He \text{ Borel}.
\end{equation}
Here "$dw$" refers to the $3$-dimensional Hausdorff measure on $P$, which coincides (up to a constant) with Lebesgue measure on $P$. To establish formula \eqref{eq:Fubini}, recall that $\calH^4$ agrees up to a multiplicative constant with the $3$-dimensional Lebesgue measure and the transformation $\Phi: \R^2 \times \R \to \He$, $\Phi((w_1,w_2),s)= (0,w_1,w_2) \cdot (s,0,0)$ has Jacobian determinant equal to $1$. Hence,
\begin{equation}\label{eq:Fub1}
\calH^4(A) \sim \int_{\R^2} \int_{-\infty}^{\infty} \chi_A(\Phi(w,s)) \, ds \, dw.
\end{equation}
Next, for every $w\in P$,  the map $s \mapsto \Phi(w,s) = w \cdot (s,0,0)$ is an isometry between $(\R,|\cdot|)$ and $(\ell_w,d)$, and thus we find that
\begin{equation}\label{eq:Fub2}
\int_{-\infty}^{\infty}  \chi_A(\Phi(w,s)) \, ds = \int_{\ell_{w}} \chi_A(q) \;d\mathcal{H}^1(q) = \calH^{1}(A \cap \ell_{w}).
\end{equation}
These facts together prove \eqref{eq:Fubini}. Applied to the set $A= [\Omega \bigtriangleup H] \cap B_{+}(0,1)$, this formula then yields
\begin{equation}\label{form26} \calH^{4}([\Omega \bigtriangleup H] \cap B_{+}(0,1)) \lesssim \int_{P\cap B(0,2)} \calH^{1}([\Omega \bigtriangleup H] \cap \ell_{w,+} \cap B(0,2)) \, dw.  \end{equation}
Here, the integration is restricted to $P\cap B(0,2)$ as $\Phi(w,s)$, $w\in P$, can lie in $B(0,1)$ only if $|s|\leq 1$, and in that case $d(\Phi(w,s),0)\geq d(w,0)- d(0,(s,0,0))>1$ if $w\in P\setminus B(0,2)$; in other words, the lines $\ell_{w}$ with $w \in P \setminus B(0,2)$ avoid $B(0,1)$.
Now, we fix $w \in P\cap B(0,2)$, and we will establish a suitable pointwise bound for the integrand in \eqref{form26}. To this end,
\begin{itemize}
\item if $\ell_{w,+} \cap \partial [\Omega \bigtriangleup H] \cap B(0,4) = \emptyset$, set $p_{w,+} := w$.
\item if $\ell_{w,+} \cap \partial [\Omega \bigtriangleup H] \cap B(0,4) \neq \emptyset$, let
\begin{displaymath} p_{w,+} := \max \left[\ell_{w,+} \cap \partial [\Omega \bigtriangleup H] \cap B(0,4)\right], \end{displaymath}
where the $\max$ refers to the only natural ordering on $\ell_{w,+}$
\end{itemize}
Then, by \eqref{form21}, we have in both cases
\begin{equation}\label{form34} p_{w,+} \in \ell_{w,+} \cap P(\delta) \subset P(\delta) \cap B(0,3), \qquad w \in P \cap B(0,2). \end{equation}
(If $w$ is sufficiently close to $\partial B(0,2)$, then it may happen that $\ell_{w,+} \cap P(\delta) \not\subset B(0,2)$, see Figure \ref{fig1}. However, $\delta > 0$ has been chosen so small that the second inclusion in \eqref{form34} holds.) Next, we define
\begin{displaymath} h_+(w) := \dist(p_{w,+},P), \qquad w \in P\cap B(0,2). \end{displaymath}
The "suitable pointwise bound" for the integrand in \eqref{form26} is the following:
\begin{equation}\label{form22} \calH^{1}([\Omega \bigtriangleup H] \cap \ell_{w,+} \cap B(0,2)) \leq h_+(w), \qquad w \in P \cap B(0,2). \end{equation}
In proving \eqref{form22}, we may evidently assume that
\begin{equation}\label{form24} [\Omega \bigtriangleup H] \cap \ell_{w,+} \cap B(0,2) \neq \emptyset. \end{equation}
Now, to prove \eqref{form22}, we will first argue that also
\begin{equation}\label{form27} [\Omega \bigtriangleup H]^{c} \cap \ell_{w,+} \cap B(0,4) \neq \emptyset.  \end{equation}
This will follow immediately once we manage to argue that
\begin{equation}\label{form25} U_{+} \subset [\Omega \bigtriangleup H]^{c}, \end{equation}
since evidently $\ell_{w,+} \cap U_{+} \neq \emptyset$.
The proof of \eqref{form25} depends on the scenario (a)-(d):
\begin{itemize}
\item[(a)] Here $U_{+} \cap \Omega = \emptyset$ and $H = \He_{-}$, so $U_{+} \subset \Omega^{c} \cap H^{c} \subset [\Omega \bigtriangleup H]^{c}$.
\item[(b)] Here $U_{+} \subset \Omega$ and $H = \He_{+}$, so $U_{+} \subset \Omega \cap H \subset [\Omega \bigtriangleup H]^{c}$.
\item[(c)] Here $U_{+} \subset \Omega$ and $B(0,4) \subset H$, so $U_{+} \subset \Omega \cap H \subset [\Omega \bigtriangleup H]^{c}$.
\item[(d)] Here $U_{+} \cap \Omega = \emptyset$ and $H \cap B(0,4) = \emptyset$, so $U_{+} \subset \Omega^{c} \cap H^{c} \subset [\Omega \bigtriangleup H]^{c}$.
\end{itemize}
We have now established \eqref{form25}, and hence \eqref{form27}. Combining \eqref{form24}-\eqref{form27}, we see that
\begin{displaymath} p_{w,+} = \max\left[ \ell_{w,+} \cap \partial [\Omega \bigtriangleup H] \cap B(0,4)\right] \end{displaymath}
is well-defined, and moreover
\begin{equation}\label{form31} [\Omega \bigtriangleup H] \cap \ell_{w,+} \cap B(0,2) \subset [w,p_{w,+}], \end{equation}
where $[w,p_{w,+}]$ stands for the (horizontal) line segment connecting $w$ and $p_{w,+}$. The point $p_{w,+}$ can be uniquely expressed as $p_{w,+} = w \cdot v_+$, where $v_{+}=(x_+,0,0)$ for some $x_+\geq 0$. Thus, we find by the definition of the metric $d$ that
\begin{displaymath}
x_{+} \leq \| \bar w^{-1} w v_{+}\| = d(wv_{+},\bar w) = d(p_{w,+},\bar w),\quad \text{for all }\bar w\in P.
\end{displaymath}
On the other hand, it holds that $d(p_{w,+},w)=x_{+}$. Hence
\begin{equation}\label{eq:dist_obs}
h_+(w)= \mathrm{dist}(p_{w,+},P)= d(p_{w,+},w)=  \calH^{1}([w,p_{w,+}]),
\end{equation}
where the last identity follows from the fact that $x \mapsto w \cdot (x,0,0)$ is an isometry from $(\R,|\cdot|)$ to $(\ell_w, d)$. We can now infer \eqref{form22} from \eqref{form31} and \eqref{eq:dist_obs}.

\medskip

Before proceeding further, we record that the function $h_+: P\cap B(0,2) \to \R$ is Borel, in fact even upper semicontinuous.
To see this, note that
$p_{w,+}$ is always contained in the compact set
\begin{displaymath} K := (P \cup \partial [\Omega \bigtriangleup H]) \cap \overline{B(0,3)} \end{displaymath}
for $w\in P \cap B(0,2)$ and consequently, also $h_+(P \cap B(0,2))$ is contained in the compact set $K':=\{\mathrm{dist}(p,P):\; p\in K\} \subset \R$.
If $h_+$ was not upper semicontinuous, there would exist $w\in P\cap B(0,2)$, $\varepsilon>0$, and a sequence $(w_n)_n\subseteq P \cap B(0,2)$ with
\begin{displaymath} \lim_{n\to \infty} w_n = w \quad \text{and} \quad \lim_{n \to \infty} h_+(w_n) > h_+(w). \end{displaymath}
We may assume that the limit on the right exists by the compactness of $K'$. Reducing to a further subsequence if necessary, we may assume that the sequence of points $p_{w_{n},{+}} = w_{n}\cdot (h_{+}(w_{n}),0,0)$ converges to a point $p = w \cdot v \in K$. Moreover,
\begin{equation}\label{form32}
h_+(w)< \lim_{k\to \infty} h_{{+}}(w_{n}) = \lim_{k\to \infty} \mathrm{dist}(p_{w_{n},+},P) = \mathrm{dist}(p, P).
\end{equation}
Since $p\in \ell_{w,+} \cap \partial[\Omega \bigtriangleup H]\cap B(0,4)$ (note that $p \notin P$ by \eqref{form32}),  this contradicts the maximality in the definition of $p_{w,+}$, and the proof of the upper semicontinuity of $h_{+}$ is complete.

\medskip

We now resume the proof of our goal \eqref{form20}.
Combining \eqref{form21} and \eqref{form22}, we have now established that
\begin{equation}\label{form23} \calH^{4}([\Omega \bigtriangleup H] \cap B_{+}(0,1)) \lesssim \int_{P\cap B(0,2)} h_+(w) \, dw = \int_{P\cap B(0,2)} \dist(p_{w,+},P) \, dw. \end{equation}
Noting that $p_{w,+} \in \partial \Omega \cap B(0,4)$ if $\dist(p_{w,+},P)\neq 0$, this conclusion is not too far from \eqref{form20} anymore. To arrive at \eqref{form20} from \eqref{form23}, we use the vertical projection $\pi := \pi_{P}$ to the subgroup $P$, introduced in Section \ref{ss:intrLip}. The most central features of $\pi$, for now, are that $\pi^{-1}\{w\} = \ell_{w}$ for $w \in P$, and that $\pi$ does not increase $3$-dimensional Hausdorff measure (too much): there exists a constant $C \geq 1$ such that
\begin{equation}\label{form28} \calH^{3}(\pi(A)) \leq C\calH^{3}(A), \qquad A \subset \He. \end{equation}
For a proof, see \cite[Lemma 3.6]{CFO}. To apply these facts, let $F \colon P \cap B(0,2) \to \He$ be the map $F(w) := p_{w,+}$. It follows from the discussion leading to \eqref{eq:dist_obs} that $F(w) = w \cdot (h_+(w),0,0)$ and hence $F$ is a Borel function. We deduce that the push-forward measure $\nu:=F_{\sharp}(\calH^{3}|_{B(0,2)\cap P})$, defined by $\nu(A):= \calH^3(B(0,2)\cap P \cap F^{-1}(A))$, is a Borel measure on $\He$ and the following integration formula holds,
\begin{equation}\label{form33} \int_{B(0,2)\cap P} \dist(p_{w,+},P) \, dw = \int_{\He} \dist(q,P) \, d \nu(q), \end{equation}
see for instance \cite[Theorem 1.19]{zbMATH01249699}.
Clearly $\nu(\He \setminus F(P\cap B(0,2))) =0$, which shows that $\mathrm{spt} \, \nu \subseteq \overline{F(P\cap B(0,2))}$. Moreover,
\begin{displaymath} \nu \ll \calH^{3}|_{\overline{F(P\cap B(0,2))}} \end{displaymath}
with bounded density, because $F^{-1}(A) \subset \pi(A)$ for all $A \subset \He$, and hence
\begin{displaymath} \nu(A) =\calH^{3}([B(0,2)\cap P] \cap F^{-1}(A)) \leq \calH^{3}(\pi(A))  \leq C\calH^{3}(A), \qquad A \subset \He, \end{displaymath}
using \eqref{form28}. Finally, we observe that
\begin{displaymath}
\overline{F(P\cap B(0,2))} \subseteq \overline{B(0,3)} \cap (P \cup \partial [\Omega \bigtriangleup H])\subseteq B(0,4) \cap (P \cup \partial \Omega).
\end{displaymath}
The last inclusion follows from the generalities $\partial [A \cup B], \partial [A \cap B] \subset \partial A \cup \partial B$:
\begin{displaymath} \partial [\Omega \bigtriangleup H] \subset \partial [\Omega \cap H^{c}] \cup \partial [\Omega^{c} \cap H] \subset \partial \Omega \cup \partial H. \end{displaymath}
In cases (a) and (b) we have $\partial H = P$, while in cases (c) and (d) the boundary of $H$ does not intersect $B(0,4)$. Combining these observations with \eqref{form33}, we find that
\begin{align*}
\int_{B(0,2)\cap P} \dist(p_{w,+},P) \, dw & \lesssim \int_{ B(0,4) \cap \partial \Omega} \dist(q,P) \, d\calH^{3}(q).
\end{align*}
Hence the right hand side of \eqref{form23} is bounded by a constant times the right hand side of \eqref{form20}. The proof of \eqref{form20}, and of Theorem \ref{oscillationBetaComparison}, is complete.

\medskip

We conclude the section by the following strengthening of Theorem \ref{oscillationBetaComparison} in the case when $\partial \Omega$ is $3$-regular:
\begin{cor}\label{oscillationAndBetas} Assume that $\Omega \subset \He$ is an open set such that $\partial \Omega$ is $3$-regular. Then,
\begin{displaymath} \frac{\w_{\Omega}(B(p,r))(s)}{r^{4}} \lesssim \beta_{\partial \Omega,1}(B(p,24r)), \qquad p \in \partial \Omega, \: 0 < s \leq r. \end{displaymath}
\end{cor}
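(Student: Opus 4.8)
The plan is to deduce the corollary from Theorem \ref{oscillationBetaComparison} by making two specific choices: a good competitor coset in the infimum on the right-hand side of \eqref{form17}, and a good ``gauge'' $\epsilon$. First a harmless reduction: since $\w_{\Omega}(B(p,r))(s) \leq \calH^{4}(B(p,r)) \lesssim r^{4}$ for $0 < s \leq r$ (as in the proof of Lemma \ref{basicProp}), the left-hand side of the asserted inequality is always $\lesssim 1$, so we may assume throughout that $\beta_{\partial \Omega,1}(B(p,24r))$ is smaller than any fixed absolute constant of our choosing -- otherwise the inequality is trivial. Fix, once and for all, a vertical coset $P = z \cdot \W$ that nearly realises $\beta_{\partial \Omega,1}(B(p,24r))$, say
\begin{displaymath} \frac{1}{(24r)^{3}} \int_{B(p,24r) \cap \partial \Omega} \frac{d(q,P)}{24r} \, d\calH^{3}(q) \leq 2\beta_{\partial \Omega,1}(B(p,24r)). \end{displaymath}

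The first substantial step is an auxiliary $L^{\infty}$-versus-$L^{1}$ comparison built only on the $3$-regularity \eqref{3regular} of $\partial \Omega$: for every vertical coset $L$ and every ball $B(x,\rho)$ with $x \in \partial \Omega$,
\begin{displaymath} \sup_{q \in B(x,\rho) \cap \partial \Omega} d(q,L) \lesssim \rho \left( \frac{1}{\rho^{3}} \int_{B(x,2\rho) \cap \partial \Omega} \frac{d(q,L)}{\rho} \, d\calH^{3}(q) \right)^{1/4}, \end{displaymath}
provided the supremum does not exceed a fixed multiple of $\rho$ (by the reduction above and lower $3$-regularity, this is the only regime we need: if the supremum were much larger than $\rho$, a fixed-proportion ball around a near-maximiser would be contained in $B(x,2\rho)$ and lie at distance $\gtrsim \rho$ from $L$, forcing $\beta_{\partial \Omega,1}(B(p,24r)) \gtrsim 1$). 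To prove the displayed estimate, pick $q_{0} \in B(x,\rho) \cap \partial \Omega$ almost realising the supremum, set $h := d(q_{0},L)$, and note that every $q \in B(q_{0},h/2)$ satisfies $d(q,L) \geq h/2$, while $B(q_{0},h/2) \subset B(x,2\rho)$ since $h \lesssim \rho$. Lower $3$-regularity of $\partial \Omega$ then gives
\begin{displaymath} \int_{B(x,2\rho) \cap \partial \Omega} d(q,L) \, d\calH^{3}(q) \geq \tfrac{h}{2}\,\calH^{3}(B(q_{0},h/2) \cap \partial \Omega) \gtrsim h^{4}, \end{displaymath}
and rearranging and taking the supremum over $q_{0}$ yields the claim.

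Now invoke Theorem \ref{oscillationBetaComparison} with the choice $\epsilon(\delta) := \delta^{4}$, which is non-decreasing with $\epsilon(\delta) \to 0$ as $\delta \to 0$, and estimate the infimum in \eqref{form17} from above by plugging in the coset $P$ chosen above. The $L^{1}$-term $\tfrac{1}{r^{3}}\int_{B(p,12r) \cap \partial \Omega} \tfrac{d(q,P)}{12r}\,d\calH^{3}$ is, after enlarging the domain of integration to $B(p,24r)$ and rearranging the powers of $r$, at most a dimensional constant times $\beta_{\partial \Omega,1}(B(p,24r))$ by the defining property of $P$. For the second term, apply the auxiliary estimate with $L = P$ and $\rho = 12r$ (so $B(x,2\rho) = B(p,24r)$); combined with the same enlargement and rearrangement as before, this gives $\sup_{q \in B(p,12r) \cap \partial \Omega} d(q,P) \lesssim 12r\,\beta_{\partial \Omega,1}(B(p,24r))^{1/4}$, whence
\begin{displaymath} \epsilon\!\left(\sup_{q \in B(p,12r) \cap \partial \Omega} \frac{d(q,P)}{12r}\right) = \left(\sup_{q \in B(p,12r) \cap \partial \Omega} \frac{d(q,P)}{12r}\right)^{4} \lesssim \beta_{\partial \Omega,1}(B(p,24r)). \end{displaymath}
Summing the two contributions gives $\tfrac{\w_{\Omega}(B(p,r))(s)}{r^{4}} \lesssim \beta_{\partial \Omega,1}(B(p,24r))$, as desired.

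The one point requiring care is the exponent bookkeeping: because $\partial \Omega$ has dimension $3$, the $L^{\infty}$ $\beta$-number is only controlled by the \emph{fourth root} of the $L^{1}$ $\beta$-number, so the gauge $\epsilon$ must be the fourth \emph{power} for the term $\epsilon(\beta_{\partial \Omega,\infty})$ in Theorem \ref{oscillationBetaComparison} to be reabsorbed into a \emph{linear} bound in $\beta_{\partial \Omega,1}$. Everything else is scale-and-normalisation chasing together with the trivial bound $\w_{\Omega}(B(p,r))(s) \lesssim r^{4}$, which disposes of the uninteresting regime $\beta_{\partial \Omega,1}(B(p,24r)) \gtrsim 1$.
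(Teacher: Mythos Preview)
Your proof is correct and follows essentially the same route as the paper: both argue, via lower $3$-regularity and a ball around a near-maximiser, that $\sup_{q \in B(p,12r) \cap \partial \Omega} d(q,P) \lesssim r\,\beta_{\partial \Omega,1}(B(p,24r))^{1/4}$ for a near-optimal vertical coset $P$, and then plug this into Theorem \ref{oscillationBetaComparison} with the gauge $\epsilon(\delta)=\delta^{4}$. The only cosmetic difference is that the paper normalises to $p=0$, $r=1$ and phrases the auxiliary $L^{\infty}$-vs-$L^{1}$ estimate as a standalone inequality \eqref{form29}, whereas you keep general $p,r$ and make the trivial-regime reduction $\beta_{\partial \Omega,1}(B(p,24r)) \ll 1$ explicit at the outset.
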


\begin{proof} As usual, we may assume that $p = 0 \in \partial \Omega$ and $r = 1$. The proof is based on the following general observation that if $E \subset \He$ is $3$-regular, and $P \subset \He$ is a vertical plane with $P \cap B(0,2) \neq \emptyset$, then
\begin{equation}\label{form29} \dist(q,P) \lesssim \left( \int_{B(0,2)\cap E} d(x,P) \, d\calH^{3}(x) \right)^{1/4}, \qquad q \in E \cap B(0,1). \end{equation}
In Euclidean space, the analogous argument can be found for example in \cite[(5.4)]{DS1}. To prove \eqref{form29}, denote the right hand side as $\beta^{1/4}$, and assume to reach a contradiction that there exists a point $q \in B(0,1)\cap E$ with $d(q,P) \geq C\beta^{1/4}$ for some large constant $C \geq 1$. We record that this implies that $C\beta^{1/4}/4 \leq 1$, since we assumed $P \cap B(0,2) \neq \emptyset$. Also, clearly
\begin{displaymath} \dist(y,P) \geq \frac{C\beta^{1/4}}{2}, \qquad y \in E \cap B(q,C\beta^{1/4}/4) \subset B(0,2). \end{displaymath}
By $3$-regularity,
\begin{align*} (C\beta^{1/4})^{3} & \lesssim \calH^{3}(B(q,C\beta^{1/4}/4)\cap E)\\
& \leq \frac{2}{C\beta^{1/4}} \int_{B(q,C\beta^{1/4}/4)\cap E} d(x,P) \, d\calH^{3}(x) \leq \frac{2\beta^{3/4}}{C}, \end{align*}
and a contradiction is hence reached for $C \geq 1$ large enough.

From \eqref{form29} (with "$1$" and "$2$" replaced by "$12$" and "$24$"), choosing $P = z \cdot \W$ to be the best-approximating vertical plane for $\beta_{\partial \Omega,1}(B(0,24))$, we may now infer that
\begin{displaymath} \inf_{\W,z} \left[ \int_{B(0,24) \cap \partial \Omega} d(q,z \cdot \W) \, d\calH^{3}(q) + \left(\sup_{q \in B(0,12) \cap \partial \Omega} d(q,z \cdot \W) \right)^{4} \right] \lesssim \beta_{\partial \Omega,1}(B(0,24)). \end{displaymath}
In combination with Theorem \ref{oscillationBetaComparison} applied to $\epsilon(\delta):=\delta^4$, this inequality completes the proof. \end{proof}

\section{Boundedness of the Riesz transform}\label{s:Riesz}

\subsection{Definitions, and restating the main theorem} We now begin to relate the vertical oscillation coefficients to the boundedness of the $3$-dimensional Riesz transform in $\He$. For technical convenience, we replace the vectorial kernel $\nabla_{\He} G = (XG,YG)$ from the introduction with the complex kernel
\begin{displaymath} K(p) = XG(p) - iYG(p), \end{displaymath}
where $G(p) = c\|p\|_{Kor}^{-2}$ is still fundamental solution to the sub-Laplace equation $\bigtriangleup_{\He} u = 0$. For the time being, we will only need to know that $K$ is smooth outside the origin and $-3$-homogeneous with respect to the dilations $\delta_{r}$:
\begin{displaymath} K(\delta_{r}(q)) = r^{-3} K(q), \qquad q \in \He \setminus \{0\}. \end{displaymath}
It follows that $|K(q)| \lesssim \|q\|^{-3}$ for $q \in \He \setminus \{0\}$. To the kernel $K$ we associate the $\epsilon$-truncated SIOs
\begin{displaymath} \calR_{\epsilon}(\mu)(p) := \int_{\{q \in \He : \|q^{-1} \cdot p\| \geq \epsilon\}} K(q^{-1} \cdot p) \, d\mu(q), \end{displaymath}
where $\mu$ is any complex measure on $\He$ with finite total variation.

Let $\mu$ be a locally finite Borel measure on $\He$. We say that \emph{$\calR$ is bounded on $L^{2}(\mu)$}, if the operators $\mathcal{R}_{\epsilon}$ are bounded on $L^{2}(\mu)$ uniformly in $\epsilon > 0$:
\begin{displaymath} \|\calR_{\epsilon}(f\mu)\|_{L^{2}(\mu)} \leq A\|f\|_{L^{2}(\mu)}, \qquad f \in L^{1}(\mu) \cap L^{2}(\mu), \: \epsilon > 0. \end{displaymath}
The measures $\mu$ relevant here are $3$-regular measures on intrinsic Lipschitz graphs. For intrinsic Lipschitz graphs $\Gamma \subset \He$ as in Theorem \ref{mainIntro}, we will directly prove the $L^{2}(\mu)$-boundedness of $\mathcal{R}$ for the particular measure
\begin{displaymath} \mu := \calS^{3}|_{\Gamma}, \end{displaymath}
where $\calS^{3}$ is the $3$-dimensional spherical Hausdorff measure defined using the metric $d$ from \eqref{eq:metric}. This choice makes it more straightforward to use the divergence theorem, but is otherwise arbitrary. In particular, once the $L^{2}(\calS^{3}|_{\Gamma})$-boundedness of $\mathcal{R}$ has been established, then it is easy to check (or see \cite[Lemma 3.1]{CFO2}) that $\calR$ is bounded on $L^{2}(\mu)$ with respect to any $3$-regular measure $\mu$ supported on $\Gamma$ -- in particular $\calH^{3}|_{\Gamma}$.

So, here is more precisely the result we will prove below:
\begin{thm}\label{main} Let $\W \subset \He$ be a vertical subgroup, which we identify with $\{(y,t) : y,t \in \R\}$. Let $\phi \colon \W \to \R$ be an intrinsic Lipschitz function, let
\begin{displaymath} \Omega := \{(x,y,t) : x > \phi(\pi_{\W}(x,y,t))\} \end{displaymath}
be the super-graph of $\phi$, and assume that
\begin{displaymath} \int_{0}^{\infty} \osc_{\Omega}(B(p,r)) \, \frac{dr}{r} \leq C < \infty, \qquad p \in \Gamma. \end{displaymath}
Then, $\calR$ is bounded on $L^{2}(\calS^{3}|_{\Gamma_{\phi}})$.
\end{thm}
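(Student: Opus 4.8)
The strategy is the classical one for proving $L^2$-boundedness of a Calder\'on--Zygmund operator on a regular set via a $T1$ or "good local oscillation" scheme, adapted to the Heisenberg setting where the kernel $K$ arises from the gradient of the fundamental solution $G$. Since $K = XG - iYG$ and $\bigtriangleup_{\He} G = 0$ away from the origin, the key analytic input is a divergence-form identity: formally, for a test function $\psi$ supported away from a point, $\calR(\psi\,\calS^3|_\Gamma)$ can be rewritten, using the divergence theorem on $\Omega$ (Theorem~\ref{divergenceTheorem}), as an integral over $\Omega$ of something like $\langle \nabla_{\He}\psi \ast \nabla_{\He}G\rangle$ plus error terms. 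The upshot is that testing $\calR$ against bump functions reduces — after integrating by parts and using $\bigtriangleup_{\He}G = 0$ — to controlling integrals of the shape $\int_\Omega \partial_t(\text{something supported in } B(p,r))\,dp$, which is \emph{exactly} the quantity estimated by Lemma~\ref{mainLemma} in terms of $\osc_\Omega(B(p,10r))$. This is the mechanism by which the vertical oscillation coefficients enter: the "vertical" derivative $\partial_t$ is the obstruction to the naive Euclidean-style estimate, and it is precisely what $\osc_\Omega$ measures.

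Concretely, I would proceed as follows. First, set up the truncated operators $\calR_\epsilon$ on $\mu = \calS^3|_\Gamma$ and recall the $T1$-type criterion adapted to non-doubling/regular settings (or rather, since $\Gamma$ is $3$-regular, the David--Journ\'e / Christ-style local $Tb$ or the "suppressed kernel" machinery from \cite{CFO2}): it suffices to verify a local mean-oscillation bound of the form $\fint_{B(p,r)\cap\Gamma}|\calR_\epsilon \1_{B(p,Cr)\cap\Gamma}|^2\,d\mu \lesssim 1$, uniformly in $p\in\Gamma$, $r>0$, $\epsilon>0$, together with the (already known) $3$-regularity and the antisymmetry/standard kernel estimates on $K$. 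Second, fix such a ball $B(p,r)$; by the translation and dilation invariance of everything in sight (the intrinsic Lipschitz condition, the kernel's $-3$-homogeneity, and the invariance in Lemma~\ref{basicProp}), reduce to $p$ near $0$ and $r=1$. Third — the heart of the matter — express $\calR\psi$ for a bump $\psi$ adapted to $B(0,1)$ via the divergence theorem: write $K = \nabla_{\He}G$-type kernel, move one derivative onto $\psi$, and use $\bigtriangleup_{\He}G=0$ to collapse the "elliptic" part, leaving a boundary term on $\Gamma = \partial\Omega$ and a solid term over $\Omega$ that, after the algebra of the vector fields $X,Y$ (where the commutator $[X,Y]=\partial_t$ appears), is dominated by $\|\partial_t\psi\|_\infty\,\osc_\Omega(B(p,10))$ via Lemma~\ref{mainLemma}. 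Fourth, sum the dyadic contributions: the local estimate produces, at scale $r=2^{-k}$ around $p$, a gain of $\osc_\Omega(B(p,C2^{-k}))$, and the hypothesis $\int_0^\infty \osc_\Omega(B(p,r))\,\tfrac{dr}{r}\le C$ means $\sum_k \osc_\Omega(B(p,C2^{-k})) \lesssim C$ (using approximate monotonicity, \eqref{approxMonotone}, to compare the discrete sum with the integral), so the telescoping/Carleson-type sum of local errors is uniformly bounded. Finally, feed this into the $T1$ criterion to conclude $L^2(\mu)$-boundedness, and transfer from $\calS^3|_\Gamma$ to $\calH^3|_\Gamma$ by \cite[Lemma 3.1]{CFO2}.

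The main obstacle I anticipate is Step three: making the "integration by parts against the fundamental solution" rigorous near the singularity of $K$ and correctly identifying the boundary contribution on $\Gamma$. One must introduce $\epsilon$-truncations or a smooth cutoff of $G$ near the diagonal, carry the error terms from differentiating the cutoff, and check they are harmless (they are supported in a controlled annulus and the kernel estimate $|K|\lesssim \|q\|^{-3}$ plus $3$-regularity absorbs them). Equally delicate is arranging that the \emph{only} surviving "bad" term is one to which Lemma~\ref{mainLemma} literally applies — i.e., genuinely of the form $r^{-4}\int_\Omega \partial_t\psi$ with $\psi\in\mathcal{C}^1$ supported in a ball of comparable size — which will require some care with the algebra relating $\nabla_{\He}$, $\widetilde\nabla_{\He}$, and the left-invariant convolution structure, together with the divergence theorem in the form stated in the paper (hence the earlier choice of the metric $d$ and the measure $\calS^3$). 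A secondary technical point is that the $T1$-type reduction must be the right one for this non-elliptic kernel; the safest route is to reuse the framework already developed in \cite{CFO2}, where the reduction of $L^2$-boundedness of $\calR$ on intrinsic Lipschitz graphs to exactly these local Carleson-type conditions was carried out, so that the present theorem amounts to verifying that condition under the weaker hypothesis \eqref{mainAssumption} rather than the $C^{1,\alpha}$ assumption used there.
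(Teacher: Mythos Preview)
Your overall architecture---divergence theorem on $\Omega$, harmonicity of $G$ to kill the principal term, surviving $\partial_t$-terms controlled by Lemma~\ref{mainLemma}, then a dyadic sum absorbed by the hypothesis $\int_0^\infty\osc_\Omega(B(p,r))\,\tfrac{dr}{r}\le C$---is exactly the paper's. But there is a real gap in your Step~one/three that would stop the argument.

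You plan to verify a $T1$-type condition, testing $\calR_\epsilon$ on $\1_{B\cap\Gamma}$ or on a plain bump $\psi$. This will not interface with the divergence theorem. Theorem~\ref{divergenceTheorem} converts a \emph{specific} kind of surface integral, namely $\int_\Gamma\langle V,\nu_H\rangle\,d\calS^3$, into a volume integral over $\Omega$; for $\calR_\epsilon(\psi\mu)(0)=\int_\Gamma K_\epsilon(q^{-1})\psi(q)\,d\calS^3(q)$ to take this form, the integrand must contain the horizontal normal $\nu_H$. The paper therefore uses Christ's local $T(b)$ theorem with the accretive system $b_B:=\psi_B\,\nu$, where $\nu=\nu_1+i\nu_2$ is the complex function built from $\nu_H$ (see \eqref{normal}). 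With this choice, $\calR_{s,\epsilon}(b_B\mu)(0)$ splits as $\int_\Gamma\langle\psi_B\varphi_\epsilon\widetilde\nabla_\He G,\nu_H\rangle\,d\calS^3 + i\int_\Gamma\langle\psi_B\varphi_\epsilon[\widetilde YG,-\widetilde XG],\nu_H\rangle\,d\calS^3$, and \emph{now} the divergence theorem applies. Testing on $\1_B$ or a bare bump gives no such inner product with $\nu_H$, and your Step three cannot start. Note also that accretivity $\mathrm{Re}\int b_B\,d\mu\gtrsim\mu(B)$ is exactly the statement that $\nu_1=(1+(\nabla^\phi\phi)^2)^{-1/2}$ is bounded below.

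Two further points you gloss over but which the paper handles carefully. First, the relevant identity is $K(q^{-1})=-\widetilde XG(q)+i\widetilde YG(q)$, so after the divergence theorem one faces $\Div\widetilde\nabla_\He G$ (a \emph{left} divergence of a \emph{right} gradient), not $\bigtriangleup_\He G$ directly; the paper uses the algebra $\Div V=\widetilde\Div V+\partial_t(-yV_1+xV_2)$ and then $\widetilde\bigtriangleup_\He G=0$, and it is precisely this leftover $\partial_t$-term (applied to an explicit $-2$-homogeneous kernel) that Lemma~\ref{mainLemma} controls. Second, the dyadic summation is organised by expanding the smooth truncation as $\varphi_\epsilon=\sum_{j\le N}\eta_j$ with $\eta_j$ supported in an annulus $\sim 2^{-j}$; one checks $\|\partial_t(\psi_B\eta_j\tilde K)\|_\infty\lesssim 2^{4j}$, which exactly cancels the $r^{-4}=2^{4j}$ in Lemma~\ref{mainLemma} and leaves $\sum_j\osc_\Omega(B(0,C2^{-j}))\lesssim\int_0^\infty\osc_\Omega(B(0,s))\,\tfrac{ds}{s}$.
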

It is easy to check that $\He \setminus \Gamma_{\phi}$ has exactly two connected components, namely the super-graph $\Omega$ above, and the sub-graph $\Omega' := \{(x,y,t) : x < \phi(\pi_{\W}(x,y,t))\}$. Since
\begin{displaymath} \osc_{\Omega}(B(p,r)) = \osc_{\He \setminus \Omega}(B(p,r)) = \osc_{\Omega'}(B(p,r)), \qquad p \in \Gamma, \: r > 0, \end{displaymath}
fixing the the complementary component in Theorem \ref{main} does not render the statement less general than that of Theorem \ref{mainIntro} in the introduction.

\subsection{Test functions and the divergence theorem} We will prove Theorem \ref{main} by verifying the conditions of Christ's local $T(b)$ theorem \cite{Christ}. We first introduce some more notation. From now on the intrinsic Lipschitz graph $\Gamma := \Gamma_{\phi}$ will be fixed as in Theorem \ref{main}, and we write $\mu := \calS^{3}|_{\Gamma}$. We define the following complex-valued function $\nu$ on $\Gamma$:
\begin{equation}\label{normal} \nu(w \cdot \phi(w)) := \nu_{1}(w \cdot \phi(w)) + i\nu_{2}(w \cdot \phi(w)) := \frac{1}{\sqrt{1 + (\nabla^{\phi}\phi(w))^{2}}} + i\frac{-\nabla^{\phi}\phi(w)}{\sqrt{1 + (\nabla^{\phi}\phi(w))^{2}}}, \end{equation}
where $\nabla^{\phi}\phi$ is the intrinsic gradient of $\phi$. Since $\phi$ is intrinsic Lipschitz, $\nu(p)$ exists for $\mu$ almost every $p \in \Gamma$, because $\nabla^{\phi}\phi(w)$ exists for $\calS^{3}$ almost every $w \in \W$, and the graph map $\Phi(w) = w \cdot \phi(w)$ preserves $\calS^{3}$ null sets by the area formula for intrinsic Lipschitz functions, \cite[Theorem 1.6]{MR3168633}. By similar reasoning, $\nu \in L^{\infty}(\mu)$.

We also define the $\R^{2}$-valued map
\begin{displaymath} \nu_{H}(q) = (\nu_{1}(q),\nu_{2}(q)) = \left(\frac{1}{\sqrt{1 + (\nabla^{\phi}\phi(w))^{2}}},\frac{-\nabla^{\phi}\phi(w)}{\sqrt{1 + (\nabla^{\phi}\phi(w))^{2}}}\right) \in \R^{2}, \quad q = w \cdot \phi(w). \end{displaymath}
Then, by \cite[Corollary 4.2]{MR3168633}, $\nu_{H}$ is the inward-pointing horizontal normal of the intrinsic super-graph $\Omega = \{(x,y,t) : x > \phi(\pi_{\W}(x,y,t))\}$, expressed in the frame $\{X,Y\}$. With this notation, we have the following divergence theorem, due to Franchi, Serapioni and Serra Cassano \cite{FSSC}:
\begin{thm}[Divergence theorem]\label{divergenceTheorem} Let $V \in \mathcal{C}^{1}_{c}(\R^{3},\R^{2})$, and let $\Gamma = \Gamma_{\phi}$ be an intrinsic Lipschitz graph as above. Then,
\begin{displaymath} -\int_{\Omega} \Div V(p) \, dp = c \int_{\Gamma} \langle V,\nu_{H} \rangle \, d\mathcal{S}^{3}, \end{displaymath}
where $\Omega = \{(x,y,t) : x > \phi(\pi_{\W}(x,y,t))\}$, and $c > 0$ is a constant.
\end{thm}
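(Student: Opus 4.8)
The statement is a Gauss--Green formula for the set $\Omega$ whose boundary is the intrinsic Lipschitz graph $\Gamma$, so the plan is to reduce it to the structure theory of sets of finite horizontal perimeter developed by Franchi, Serapioni, and Serra Cassano. First I would recall that, since $\phi$ is intrinsic Lipschitz, the super-graph $\Omega$ is a set of locally finite $\He$-perimeter; let $|\partial\Omega|_{\He}$ denote its (Radon) horizontal perimeter measure on $\R^{3}$ and $\nu_{\Omega}$ the associated measure-theoretic inward horizontal unit normal. By the definition of horizontal perimeter together with the elementary divergence theorem for $\mathcal{C}^{1}_{c}$ fields, one has
\begin{displaymath} \int_{\Omega} \Div V \, dp = -\int_{\R^{3}} \langle V, \nu_{\Omega} \rangle \, d|\partial\Omega|_{\He}, \qquad V \in \mathcal{C}^{1}_{c}(\R^{3},\R^{2}), \end{displaymath}
with no contribution from infinity because $V$ has compact support.

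It then remains to identify $|\partial\Omega|_{\He}$ with $c\,\mathcal{S}^{3}|_{\Gamma}$ and $\nu_{\Omega}$ with $\nu_{H}$ from \eqref{normal}. For the measure, I would push forward under the graph map $\Phi(w) = w \cdot \phi(w)$ and compare two computations of the density: on one side, the area formula for intrinsic Lipschitz graphs (\cite[Theorem 1.6]{MR3168633}) expresses $\mathcal{S}^{3}|_{\Gamma}$ in terms of a weight on $\W$ carrying the Jacobian factor $\sqrt{1 + (\nabla^{\phi}\phi)^{2}}$; on the other, the horizontal perimeter of a graph is given by exactly that same factor against $\mathcal{L}^{2}$ on $\W$. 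Hence $|\partial\Omega|_{\He}$ and $c\,\mathcal{S}^{3}|_{\Gamma}$ are absolutely continuous with respect to the same reference measure with proportional densities, so they coincide (the constant $c$ absorbing the normalisations of $\mathcal{S}^{3}$ and $\mathcal{L}^{3}$). For the normal, \cite[Corollary 4.2]{MR3168633} states precisely that the vector $\nu_{H}$ built from $\nabla^{\phi}\phi$ is the inward-pointing horizontal normal of $\Omega$, so $\nu_{\Omega} = \nu_{H}$ holds $\mathcal{S}^{3}$-almost everywhere on $\Gamma$. Substituting these two identifications into the displayed identity gives the theorem.

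The main difficulty here is not conceptual but one of careful bookkeeping: one must check that the relevant boundary measure is the \emph{spherical} Hausdorff measure built from the box metric $d$ of \eqref{eq:metric} -- the choice for which the perimeter measure is a clean constant multiple of $\mathcal{S}^{3}$ -- that the reduced boundary $\partial^{*}\Omega$ exhausts all of $\Gamma$ up to an $\mathcal{S}^{3}$-null set (which follows because $\Phi$ preserves $\mathcal{S}^{3}$-null sets, as already used in the text), and that the sign and orientation conventions for $\nu_{H}$ match those for the inward normal and for $\Div$. Since each of these ingredients is contained in the cited work of Franchi, Serapioni, and Serra Cassano on finite-perimeter sets and on intrinsic Lipschitz functions, the proof is an assembly of those results rather than a self-contained argument, and I would simply invoke \cite{FSSC} (together with \cite{MR3168633}) for the final formula.
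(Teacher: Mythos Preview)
Your proposal is correct and follows essentially the same route as the paper: both reduce the statement to the Franchi--Serapioni--Serra Cassano theory of sets of finite $\He$-perimeter, citing that the super-graph of an intrinsic Lipschitz function is an $\He$-Caccioppoli set and then invoking the Gauss--Green formula from \cite{FSSC}. The only organisational difference is that the paper cites \cite[Corollary~7.6]{FSSC} directly (which already expresses the perimeter measure as $c\,\mathcal{S}^{3}$ on the measure-theoretic boundary $\partial_{\ast,\He}\Omega$) and then argues $\partial_{\ast,\He}\Omega = \Gamma$, whereas you reconstruct the identification $|\partial\Omega|_{\He} = c\,\mathcal{S}^{3}|_{\Gamma}$ by comparing the area formula with the perimeter density; both paths are valid and amount to the same citation chain.
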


\begin{remark} The divergence theorem in \cite{FSSC} looks a little different than Theorem \ref{divergenceTheorem} above, so a few remarks are in order. First, the sub- and super-graphs of intrinsic Lipschitz graphs are $\He$-Caccioppoli sets by \cite[Theorem 4.18]{FSSC2}, so \cite[Corollary 7.6]{FSSC} gives the formula
\begin{displaymath} -\int_{\Omega} \Div V(p) \, dp = c \int_{\partial_{\ast,\He}\Omega} \langle V,\nu_{H} \rangle \, d\mathcal{S}^{3}, \qquad V \in \mathcal{C}^{1}_{c}(\R^{3},\R^{2}). \end{displaymath}
Here $\partial_{\ast,\He}\Omega$ stands for the measure theoretic boundary of $\Omega$, see \cite[Definition 7.4]{FSSC}. But for domains $\Omega$ bounded by intrinsic Lipschitz graphs $\Gamma$, the measure theoretic boundary of $\Omega$ equals the topological boundary $\partial \Omega = \Gamma$: the inclusion $\Gamma \subset \partial_{\ast,\He} \Omega$ follows from basic definitions, and the inclusion $\partial_{\ast,\He} \Omega \subset \Gamma$ follows from \cite[Lemma 7.5(i)]{FSSC}.
\end{remark}

We now use the complex function $\nu$ to specify a collection of accretive test functions. Let $\psi \colon \He \to [0,1]$ be a smooth function with $\chi_{B(0,1/2)} \leq \psi \leq \chi_{B(0,1)}$, and let
\begin{displaymath} \psi_{B(p,r)}(q) := \psi(\delta_{1/r}(p^{-1} \cdot q)) \end{displaymath}
be a rescaled version of $\psi$ with $\spt \psi_{B(p,r)} \subset B(p,r)$. We record that
\begin{equation}\label{nablaPsi} |\nabla_{\He} \psi_{B(p,r)}| \lesssim \frac{\chi_{B(p,r)}}{r} \quad \text{and} \quad |\partial_{t} \psi_{B(p,r)}| \lesssim \frac{\chi_{B(p,r)}}{r^{2}}. \end{equation}
We set
\begin{displaymath} b_{B(p,r)} := \psi_{B(p,r)}\nu, \qquad p \in \Gamma, \: r > 0. \end{displaymath}
Then, recalling the formula \eqref{normal} for $\nu$, we note that
\begin{displaymath} \|b_{B(p,r)}\|_{L^{\infty}(\mu)} \lesssim 1 \quad \text{and} \quad \textrm{Re } \left( \int b_{B(p,r)} \, d\mu \right) \gtrsim \mu(B(p,r)) \end{displaymath}
for all $B(p,r)$ with $p \in \Gamma$ and $r > 0$. According to \cite[Main Theorem 10]{Christ}, the $L^{2}(\mu)$ boundedness of $\calR$ will follow once we verify the testing conditions
\begin{equation}\label{testingConditions} \|\calR_{\epsilon}(b_{B}\mu)\|_{L^{\infty}(\mu)} \leq C \quad \text{and} \quad \|\calR_{\epsilon}^{\ast}(b_{B}\mu)\|_{L^{\infty}(\mu)} \leq C \end{equation}
for all balls $B = B(p,r)$ centred on $\Gamma$, with $C \geq 1$ independent of $\epsilon > 0$. Here $\calR_{\epsilon}^{\ast}$ is the adjoint of $\calR_{\epsilon}$ with kernel
\begin{displaymath} K^{\ast}(p) = K(p^{-1}). \end{displaymath}
In fact, it will be technically more convenient to verify the testing conditions \eqref{testingConditions} for \emph{smooth truncations} of $\calR$. By a smooth truncation, we mean the operator $\calR_{s,\epsilon}$ associated to the kernel
\begin{equation}\label{Kepsilon} K_{\epsilon} := \varphi_{\epsilon}K, \end{equation}
where $\varphi$ is smooth and radially symmetric with
\begin{displaymath} \chi_{\He \setminus B(0,2)} \leq \varphi \leq \chi_{\He \setminus B(0,1)}, \end{displaymath}
and $\varphi_{\epsilon}(p) := \varphi(\delta_{1/\epsilon}(p))$ for $p \in \He$. For future reference, we remark that
\begin{equation}\label{varphiEpsilon} |\nabla_{\He}\varphi_{\epsilon}| \lesssim \frac{1}{\epsilon} \cdot \chi_{B(0,2\epsilon) \setminus B(0,\epsilon)} \quad \text{and} \quad |\partial_{t}\varphi_{\epsilon}| \lesssim \frac{1}{\epsilon^{2}} \cdot \chi_{B(0,2\epsilon) \setminus B(0,\epsilon)}.  \end{equation}
Also, if $\epsilon = 2^{-N}$ for some $N \in \N$, then $\varphi_{\epsilon}$ can be expanded as a series
\begin{equation}\label{series} \varphi_{\epsilon} = \varphi_{2^{-N}} = \sum_{j \leq N} (\varphi_{2^{-j}} - \varphi_{2^{-j + 1}}) =: \sum_{j \leq N} \eta_{j}, \end{equation}
noting that $\eta_{j}$ is supported on the annulus $B(0,2^{-j + 2}) \setminus B(0,2^{-j})$. We will assume without loss of generality that $\epsilon$ has this form in the sequel.

Now, instead of \eqref{testingConditions}, we will check that
\begin{equation}\label{testingSmooth} \|\calR_{s,\epsilon}(b_{B}\mu)\|_{L^{\infty}(\mu)} \leq C \quad \text{and} \quad \|\calR_{s,\epsilon}^{\ast}(b_{B}\mu)\|_{L^{\infty}(\mu)} \leq C \end{equation}
for all balls $B$ centred on $\Gamma$, and for some constant $C \geq 1$ independent of $\epsilon > 0$. It is easy to check that
\begin{displaymath} |\calR_{s,\epsilon}(f) - \calR_{\epsilon}(f)| \lesssim M_{\mu}(f)(p) \end{displaymath}
for all $f \in L^{\infty}(\mu)$ and $p \in \Gamma$, where $M_{\mu}$ is the Hardy-Littlewood maximal function
\begin{displaymath} M_{\mu}f(p) = \sup_{r > 0} \fint_{B(p,r)} |f(q)| \, d\mu(q), \end{displaymath}
Since $\|M_{\mu}(b_{B}\mu)\|_{L^{\infty}(\mu)} \lesssim \|b_{B}\|_{L^{\infty}(\mu)} \lesssim 1$, we see that \eqref{testingSmooth} implies \eqref{testingConditions}.

\subsection{Initial reductions for verifying the testing conditions}\label{initialReductions} We start by verifying the first condition in \eqref{testingSmooth}, that is, proving that
\begin{equation}\label{form5} |\mathcal{R}_{s,\epsilon}(b_{B}\mu)(p)| \leq C, \qquad p \in \Gamma. \end{equation}
The arguments concerning the second testing condition in \eqref{testingSmooth} will be very similar. To prove \eqref{form5}, we make a few reductions, which show that it suffices to verify \eqref{form5} for $p = 0 \in \Gamma$ and for a ball $B$ with $\dist(0,B) \leq \diam(B) = 1$

As a first step, we argue that it suffices to consider $p \in \Gamma$ with
\begin{equation}\label{distPB} \dist(p,B) \leq \diam(B). \end{equation}
Indeed, \eqref{form5} follows from standard kernel estimates if $\dist(p,B) > \diam(B)$. To see this, write $B = B(p_{0},r)$, and fix $p \in \Gamma$ with $\dist(p,p_{0}) \geq 2r$. Then $d(p,q) \geq r$ for all $q \in B$, and consequently
\begin{displaymath} |\calR_{s,\epsilon}(b_{B})(p)| \lesssim \|b_{B}\|_{L^{\infty}(\mu)} \int_{B} \frac{d\mu(q)}{d(p,q)^{3}} \lesssim \frac{\mu(B)}{r^{3}} \sim 1. \end{displaymath}
So, in the sequel we may assume that \eqref{distPB} holds.

Next, we argue that it suffices to consider the case $p = 0 \in \Gamma$. Indeed, note first that
\begin{displaymath} \tilde{\mu} := \calS^{3}|_{p^{-1} \cdot \Gamma} = (\tau_{p^{-1}})_{\sharp}\calS^{3}|_{\Gamma} = (\tau_{p^{-1}})_{\sharp}\mu. \end{displaymath}
Then, write
\begin{displaymath} \tilde{b}_{p^{-1} \cdot B} := \psi_{p^{-1} \cdot B}\nu_{p^{-1} \cdot \Gamma}, \end{displaymath}
where $\nu_{p^{-1} \cdot \Gamma}$ is the analogue of $\nu$ (recall \eqref{normal}) for the left-translated intrinsic Lipschitz graph $p^{-1} \cdot \Gamma$. In particular,
\begin{displaymath} \nu_{p^{-1} \cdot \Gamma}(p^{-1} \cdot q) = \nu(q), \qquad q \in \Gamma, \end{displaymath}
so that
\begin{displaymath} \tilde{b}_{p^{-1} \cdot B}(p^{-1} \cdot q) = \psi_{B}(q)\nu(q) = b_{B}(q), \qquad q \in \Gamma. \end{displaymath}
Using this equation, we infer that
\begin{align*} \calR_{s,\epsilon}(\tilde{b}_{p^{-1} \cdot B}\tilde{\mu})(0) & = \int_{p^{-1} \cdot \Gamma} K_{\epsilon}(q^{-1})\tilde{b}_{p^{-1} \cdot B}(q) \, d\calS^{3}(q)\\
& = \int K_{\epsilon}(q^{-1})\tilde{b}_{p^{-1} \cdot B}(q) \, d[(\tau_{p^{-1}})_{\sharp}\mu](q)\\
& = \int_{\Gamma} K_{\epsilon}((p^{-1} \cdot q)^{-1})\tilde{b}_{p^{-1} \cdot B}(p^{-1} \cdot q) \, d\calS^{3}(q)\\
& = \int_{\Gamma} K_{\epsilon}(q^{-1} \cdot p)b_{B}(q) \, d\calS^{3}(q) = \calR_{s,\epsilon}(b_{B}\mu)(p). \end{align*}
This shows that, to find a bound for $\calR_{s,\epsilon}(b_{B}\mu)(p)$, it suffices to do so for $\calR_{s,\epsilon}(\tilde{b}_{p^{-1} \cdot B}\tilde{\mu})(0)$. But the intrinsic Lipschitz graph $p^{-1} \cdot \Gamma$ has all the same properties as we assumed from $\Gamma$ in Theorem \ref{main}: the intrinsic Lipschitz constants do not change, nor do the bounds for the vertical oscillation numbers, recalling Lemma \ref{basicProp}. So, we may assume that $p = 0 \in \Gamma$.

Finally, we argue that we may assume $\diam(B) = 1$. For this purpose, we first note that
\begin{equation}\label{form11} r^{3} \cdot \delta_{r\sharp}\mu = S^{3}|_{\delta_{r}(\Gamma)} =: \tilde{\mu}. \end{equation}
Indeed, if $A \subset \delta_{r}(\Gamma)$, then $\delta_{1/r}(A) \subset \Gamma$, hence
\begin{displaymath} r^{3} \cdot (\delta_{r\sharp}\mu)(A) = r^{3} \calS^{3}(\Gamma \cap \delta_{1/r}(A)) = \calS^{3}(\delta_{r}(\Gamma) \cap A) = \tilde{\mu}(A), \end{displaymath}
which proves \eqref{form11}. Now, let $r := \diam(B)$, and let $\tilde{b}_{\delta_{1/r}(B)} := \psi_{\delta_{1/r}(B)} \cdot \nu_{\delta_{1/r}(\Gamma)}$, where $\nu_{\delta_{1/r}(\Gamma)}$ stands for the analogue of $\nu$ for the dilated intrinsic Lipschitz graph $\delta_{1/r}(\Gamma)$. In particular, it is easy to check that
\begin{displaymath} \tilde{b}_{\delta_{1/r}(B)}(\delta_{1/r}(q)) = b_{B}(q), \qquad q \in \Gamma. \end{displaymath}
We also record the equation
\begin{displaymath} K_{\epsilon}(\delta_{r}(q)) = \varphi_{\epsilon}(\delta_{r}(q))K(\delta_{r}(q)) = r^{-3} \cdot \varphi_{\epsilon/r}(q)K(q) = r^{-3}K_{\epsilon/r}(q), \end{displaymath}
using the definition of the kernel $K_{\epsilon}$ from \eqref{Kepsilon}, and the $-3$-homogeneity of $K$. Then, we may use \eqref{form11} and the equations above as follows:
\begin{align*} \calR_{s,\epsilon/r}(\tilde{b}_{\delta_{1/r}(B)}\tilde{\mu})(0) & = \int_{\delta_{1/r}(\Gamma)} K_{\epsilon/r}(q^{-1})\tilde{b}_{\delta_{1/r}(B)}(q) \, d\calS^{3}(q)\\
& = r^{-3} \int K_{\epsilon/r}(q^{-1})\tilde{b}_{\delta_{1/r}(B)}(q) \, d\delta_{(1/r)\sharp}\mu(q)\\
& = r^{-3} \int_{\Gamma} K_{\epsilon/r}([\delta_{1/r}(q)]^{-1})\tilde{b}_{\delta_{1/r}(B)}(\delta_{1/r}(q)) \, d\calS^{3}(q)\\
& = \int_{\Gamma} K_{\epsilon}(q^{-1})b_{B}(q) \, d\calS^{3}(q) = \calR_{s,\epsilon}(b_{B}\mu)(0).   \end{align*}
So, to estimate $\calR_{s,\epsilon}(b_{B}\mu)(0)$, it suffices to estimate $\calR_{s,\epsilon/r}(\tilde{b}_{\delta_{1/r}(B)}\tilde{\mu})(0)$. But, arguing as in the previous reduction, $\delta_{1/r}(\Gamma)$ is an intrinsic Lipschitz graph with the same properties as $\Gamma$. So, in the sequel we assume that $\diam(B) = 1$.

Summarising, we have reduced the proof of \eqref{form5} to the case
\begin{equation}\label{allReductions} p = 0 \in \Gamma \quad \text{and} \quad \dist(0,B) \leq \diam(B) = 1. \end{equation}

\subsection{Verifying the testing conditions} With the above reductions in mind, we start the proof of \eqref{form5}. We record that
\begin{equation}\label{form10} K(q^{-1}) = -\widetilde{X}G(q) + i\widetilde{Y}G(q), \qquad q \in \He \setminus \{0\}, \end{equation}
as a straightforward computation shows. Hence, we may write
\begin{align*} \calR_{s,\epsilon}(b_{B}\mu)(0) & = \int_{\Gamma} \varphi_{\epsilon}(q)(-\widetilde{X}G(q) + i\widetilde{Y}G(q))b_{B}(q) \, d\calS^{3}(q)\\
& = -\int_{\Gamma} \langle \psi_{B}(q)\varphi_{\epsilon}(q)\widetilde{\nabla}_{\He}G( q),\nu_{H}(q) \rangle \, d\calS^{3}(q)\\
&\quad + i\int_{\Gamma} \langle \psi_{B}(q)\varphi_{\epsilon}( q)[\widetilde{Y}G(q),-\widetilde{X}G(q)],\nu_{H}(q) \rangle \, d\calS^{3}(q) =: I_{1} + iI_{2}, \end{align*}
recalling the notation from Section \ref{s:vector_fields}. In order to evaluate $I_1$ and $I_2$, respectively, we will apply the divergence theorem (Theorem \ref{divergenceTheorem}) to the vector fields $$V_{1} :=(\psi_B  \varphi_{\epsilon}\widetilde{X} G , \psi_B \varphi_{\epsilon} \widetilde{Y} G) \in \mathcal{C}^{\infty}_{c}(\R^{3},\R^{2})$$ and $$V_{2} :=(\psi_B  \varphi_{\epsilon}\widetilde{Y} G , - \psi_B \varphi_{\epsilon} \widetilde{X} G) \in \mathcal{C}^{\infty}_{c}(\R^{3},\R^{2}),$$ respectively.
\subsubsection{Estimate for $I_{1}$} After an application of Theorem \ref{divergenceTheorem}, $I_{1}$ becomes
\begin{align*} I_{1} & = -c\int \Div (\psi_{B}(q)\varphi_{\epsilon}(q) \widetilde{\nabla}_{\He}G(q)) \, dq\\
& = - c\int_{\Omega} \langle \nabla_{\He} (\psi_{B}{\varphi}_{\epsilon})(q),\widetilde{\nabla}_{\He}G( q) \rangle \, dq - c\int_{\Omega} (\psi_{B}\varphi_{\epsilon})(q)\Div \widetilde{\nabla}_{\He}G(q) \, dq =: -cI_{1}^{1} - cI_{1}^{2}. \end{align*}
For $I_{1}^{1}$, we infer from \eqref{nablaPsi}, \eqref{varphiEpsilon}, and the product rule that
\begin{displaymath} |\nabla_{\He}(\psi_{B}\varphi_{\epsilon})| \lesssim \frac{1}{\epsilon} \cdot \chi_{B(0,2\epsilon) \setminus B(0,\epsilon)} + \chi_{B}. \end{displaymath}
Since moreover $|\widetilde{\nabla}_{\He}G(q)| \lesssim \| q\|^{-3}$ (this follows from \eqref{form10} for instance), we get
\begin{equation}\label{form7}
\left|\int_{\Omega} \langle \nabla_{\He} (\psi_{B}\varphi_{\epsilon})(q),\widetilde{\nabla}_{\He}G( q) \rangle \, dq \right|
\lesssim \frac{1}{\epsilon} \int_{B(0,2\epsilon) \setminus B(0,\epsilon)} \|q\|^{-3} dq + \int_{B} \|q\|^{-3} \, dq \lesssim 1.
\end{equation}

To handle the term $I_{1}^{2}$, we observe the following general relationship between left and right divergence:
\begin{equation}\label{leftVsRight} \Div (V_{1},V_{2}) = \widetilde{\Div} (V_{1},V_{2}) + \partial_{t}(-y V_{1} + xV_{2}), \qquad (V_{1},V_{2}) \in \mathcal{C}^{1}(\R^{3},\R^{2}). \end{equation}
It follows that
\begin{displaymath} I_{1}^{2} = \int_{\Omega} (\psi_{B}\varphi_{\epsilon})(q) \widetilde{\Div} \widetilde{\nabla}_{\He}G(q) \, dq + \int_{\Omega} (\psi_{B}\varphi_{\epsilon})(q)\partial_{t}(-y\tilde{X}G(q) + x\tilde{Y}G(q)) \, dq. \end{displaymath}
Here
\begin{displaymath} \widetilde{\Div}\widetilde{\nabla}_{\He}G(q) = \widetilde{\bigtriangleup}_{\He} G(q) = 0, \qquad q \in \spt \varphi_{\epsilon}, \end{displaymath}
since $G$ is simultaneously the fundamental solution for both operators $\bigtriangleup_{\He}$ and $\widetilde{\bigtriangleup}_{\He}$. So, the first term vanishes. Consequently,
\begin{equation}\label{form6} I_{1}^{2} =: \int_{\Omega} (\psi_{B}\varphi_{\epsilon})(q)\partial_{t} \tilde{K}(q) \, dq = \int_{\Omega} \partial_{t}(\psi_{B}\varphi_{\epsilon}\tilde{K})(q) \, dq - \int_{\Omega} \partial_{t}(\psi_{B}\varphi_{\epsilon})(q)\tilde{K}(q) \, dq, \end{equation}
where $\tilde{K}$ is the $-2$-homogeneous kernel
\begin{displaymath} \tilde{K}(z,t) = -y\tilde{X}G(z,t) + x\tilde{Y}G(z,t) = \frac{8t|z|^{2}}{\|(z,t)\|_{Kor}^{6}}, \qquad z = (x,y). \end{displaymath}
The main term in \eqref{form6} is the first one, because the second one can be treated in the same fashion as $I_{1}^{1}$ above. Indeed, simply notice from \eqref{nablaPsi}, \eqref{varphiEpsilon}, and the product rule that
\begin{displaymath} |\partial_{t}(\psi_{B}\varphi_{\epsilon})(q)| \lesssim \frac{1}{\epsilon^{2}} \chi_{B(0,2\epsilon) \setminus B(0,\epsilon)} + \chi_{B}, \end{displaymath}
so that
\begin{align*} \left| \int_{\Omega} \partial_{t}(\psi_{B}\varphi_{\epsilon})(q)\tilde{K}(q) \, dq \right| & \lesssim \frac{1}{\epsilon^{2}} \int_{B(0,2\epsilon) \setminus B(0,\epsilon)} |\tilde{K}(q)| \, dq + \int_{B} |\tilde{K}(q)| \, dq\\
& \lesssim \frac{1}{\epsilon^{4}} \calH^{4}(B(0,2\epsilon)) + 1 \sim 1. \end{align*}
Finally, the first term in \eqref{form6} is handled using \eqref{series} and Lemma \ref{mainLemma} (noting that $\spt (\psi_{B}\eta_{j}\tilde{K}) \subset B(0,s)$ for any $s \in [2^{-j + 2},2^{-j + 3}]$):
\begin{align*} \left| \int_{\Omega} \partial_{t}(\psi_{B}\varphi_{\epsilon}\tilde{K})(q) \, dq \right| & \leq \sum_{j \leq N} \left|  \int_{\Omega} \partial_{t}(\psi_{B}\eta_{j}\tilde{K})(q) \, dq \right|\\
& \lesssim \sum_{j \leq N} 2^{-4j} \|\partial_{t}(\psi_{B}\eta_{j}\tilde{K})\|_{\infty} \int_{2^{-j + 2}}^{2^{-j + 3}} \osc_{\Omega}(B(0,10s)) \, \frac{ds}{s} \end{align*}
From the product rule, noting that
\begin{itemize}
\item $\spt \eta_{j} \subset B(0,2^{-j + 2}) \setminus B(0,2^{-j})$,
\item $\spt \psi_{B} \subset B \subset B(0,2)$ by \eqref{allReductions},
\item $\tilde{K}$ is $-2$-homogeneous, and
\item $\partial_{t}\tilde{K}$ is $-4$-homogeneous,
\end{itemize}
we see that
\begin{displaymath} \|\partial_{t}(\psi_{B}\eta_{j}\tilde{K})\|_{\infty} \lesssim \begin{cases} 2^{4j}, & j \geq -1\\  0, & j < -1. \end{cases} \end{displaymath}
To verify the last bullet point, one can simply compute that $\partial_{t} \tilde{K}$ is the kernel
\begin{displaymath} \partial_{t} \tilde{K}(z,t) = 8\frac{|z|^{2}(|z|^{4} - 32t^{2})}{\|(z,t)\|_{Kor}^{10}}, \qquad z = (x,y). \end{displaymath}
Summarising the estimate above, we have now shown that
\begin{displaymath} |I_{1}| \lesssim 1 + \sum_{-1 \leq j \leq N} \int_{2^{-j + 2}}^{2^{-j + 3}} \osc_{\Omega}(B(0,10s)) \, \frac{ds}{s} \lesssim 1 + \int_{0}^{\infty} \osc_{\Omega}(B(0,s)) \, \frac{ds}{s} \leq 1 + C. \end{displaymath}

\subsubsection{Estimate for $I_{2}$} We move to the term
\begin{align*} I_{2} & = \int_{\Gamma} \langle \psi_{B}(q)\varphi_{\epsilon}( q)[\widetilde{Y}G(q),-\widetilde{X}G(q)],\nu_{H}(q) \rangle \, d\calS^{3}(q)\\
& = - c\int_{\Omega} \Div (\psi_{B}\varphi_{\epsilon}[\widetilde{Y}G,-\widetilde{X}G])(q) \, dq\\
& = -c\int_{\Omega} \langle \nabla_{\He} (\psi_{B}\varphi_{\epsilon})(q), (\widetilde{Y}G(q),-\widetilde{X}G(q)) \rangle \, dq - c\int_{\Omega} (\psi_{B}\varphi_{\epsilon})(q) \Div [\widetilde{Y}G,-\widetilde{X}G](q) \, dq\\& =: -cI_{2}^{1} - cI_{2}^{2}. \end{align*}
where the divergence theorem was applied. The term $I_{2}^{1}$ can be handled precisely as $I_{1}^{1}$ above, see \eqref{form7}. So, we concentrate on the term $I_{2}^{2}$. Once again, due to the presence of the right-invariant vector fields $\widetilde{X}$ and $\widetilde{Y}$, it is useful to consider the right divergence instead of the left one. Recalling \eqref{leftVsRight}, and setting $p = (x,y,t)$, we write
\begin{align*} \Div [\widetilde{Y}G,-\widetilde{X}G](p) & = \widetilde{\Div} [\widetilde{Y}G,-\widetilde{X}G](p) + \partial_{t}(-y\widetilde{Y}G - x\widetilde{X}G)(p)\\
& = (\widetilde{X}\widetilde{Y}G - \widetilde{Y}\widetilde{X}G)(p) + \partial_{t} \widehat{K}(p)\\
& = -\partial_{t}G(p) + \partial_{t}\widehat{K}(p). \end{align*}
Here $\widehat{K}$ is yet another $-2$-homogeneous kernel with explicit expression
\begin{displaymath} \widehat{K}(z,t) = \frac{2|z|^{4}}{\|(z,t)\|_{Kor}^{6}}, \qquad (z,t) \in \He \setminus \{0\}. \end{displaymath}
In other words,
\begin{equation}\label{I22} I_{2}^{2} = -\int_{\Omega} (\psi_{B}\varphi_{\epsilon})(q)\partial_{t}G(q) \, dq + \int_{\Omega} (\psi_{B}\varphi_{\epsilon})(q)\partial_{t}\widehat{K}(q) \, dq. \end{equation}
From this point on, the treatment of both terms can be continued as on line \eqref{form6} above. The only facts we needed about the kernel $\widetilde{K}$ there was that it is $-2$-homogeneous, and its $t$-derivative is $-4$-homogeneous. These properties are also satisfied for $G$ and $\widehat{K}$. In fact, the $t$-derivatives are given by
\begin{displaymath} \partial_{t} G(z,t) = \frac{16 t}{\|(z,t)\|_{Kor}^{6}} \quad \text{and} \quad \partial_{t} \widehat{K}(z,t) = -\frac{96|z|^{4}t}{\|(z,t)\|_{Kor}^{10}}. \end{displaymath}
So, continuing as in \eqref{form6}, and afterwards, we obtain
\begin{displaymath} |I_{2}^{2}| \lesssim 1 + \int_{0}^{\infty} \osc_{\Omega}(B(0,s)) \, \frac{ds}{s} \leq 1 + C. \end{displaymath}
This concludes the proof of \eqref{form5}: we have shown that
\begin{equation}\label{form8} \|\mathcal{R}_{s,\epsilon}(b_{B}\mu)\|_{L^{\infty}(\mu)} \leq C. \end{equation}
\subsubsection{The adjoint} To prove Theorem \ref{main}, it remains to establish the bound analogous to \eqref{form8} for the adjoint $\mathcal{R}^{\ast}_{s,\epsilon}$. Arguing as in Section \ref{initialReductions}, we may assume that the conditions in \eqref{allReductions} are in force. In other words, it suffices to show that
\begin{displaymath} |\calR^{\ast}_{s,\epsilon}(b_{B}\mu)(0)| \leq C, \end{displaymath}
where $B \subset \He$ is a ball with $\dist(0,B) \leq 1 = \diam(B)$, and $0 \in \Gamma$. By definition,
\begin{align*} \calR^{\ast}_{s,\epsilon}(b_{B}\mu)(0) & = \int_{\Gamma} \varphi_{\epsilon}(q)(XG(q) - iYG(q))b_{B}(q) \, d\calS^{3}(q)\\
& = \int_{\Gamma} \langle (\psi_{B}\varphi_{\epsilon})(q)\nabla_{\He}G(q),\nu_{H}(q) \, \rangle \, d\calS^{3}\\
& \qquad + i\int_{\Gamma} \langle (\psi_{B}\varphi_{\epsilon})(q)[-YG,XG](q),\nu_{H}(q) \rangle \, d\calS^{3}(q) =: J_{1} + iJ_{2}. \end{align*}
The situation is now similar to, but slightly simpler than, the one we have already treated. After we apply the divergence theorem and use the product rule, $J_{1}$ becomes
\begin{displaymath} J_{1} = -c \int_{\Omega} \langle \nabla_{\He} (\psi_{B}\varphi_{\epsilon})(q),\nabla_{\He}G(q) \rangle \, dq - c\int_{\Omega} (\psi_{B}\varphi_{\epsilon})(q) \Div \nabla_{\He}G(q) \, dq. \end{displaymath}
The second term vanishes, as $\Div \nabla_{\He} G(q) = \bigtriangleup_{\He} G(q) = 0$ for $q \in \spt \varphi_{\epsilon}$. The first term can be estimated as in \eqref{form7}.

Concerning $J_{2}$, the divergence theorem gives
\begin{displaymath} J_{2} = -c \int_{\Omega} \langle \nabla_{\He} (\psi_{B}\varphi_{\epsilon})(q),[-YG,XG](q) \rangle \, dq - c\int_{\Omega} (\psi_{B}\varphi_{\epsilon})(q) \Div [-YG,XG](q) \, dq. \end{displaymath}
Once more, the first term is estimated using the argument from \eqref{form7}. In the second term, we find that
\begin{displaymath} \Div [-YG,XG](q) = -XYG(q) + YXG(q) = -\partial_{t}G(q), \qquad q \in \He \setminus \{0\}. \end{displaymath}
From this point on, the estimates are the same as for the term $I_{2}^{2}$ above, see \eqref{I22}. We have now established that
\begin{displaymath} \|\calR^{\ast}(b_{B}\mu)\|_{L^{\infty}(\mu)} \leq C, \end{displaymath}
and the proof of Theorem \ref{main} is complete.

\section{Application: intrinsic Lipschitz graphs with extra vertical regularity}

In this section, prove Theorem \ref{main2Intro}, which we restate below:
\begin{thm}\label{main2} Let $\phi \colon \W \to \R$ be an intrinsic Lipschitz function which satisfies the following H\"older regularity in the vertical direction:
\begin{equation}\label{holder} |\phi(y,t) - \phi(y,s)| \leq H|t - s|^{(1 + \tau)/2}, \qquad |s - t| \leq 1, \end{equation}
and
\begin{equation}\label{holder2} |\phi(y,t) - \phi(y,s)| \leq H|t - s|^{(1 - \tau)/2}, \qquad |s - t| > 1. \end{equation}
where $H \geq 1$ and $0 < \tau \leq 1$. Then $R$ is bounded on $L^{2}(\calH^{3}|_{\Gamma_{\phi}})$.
\end{thm}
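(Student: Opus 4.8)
The plan is to deduce Theorem~\ref{main2} from Theorem~\ref{main} by verifying the Dini-type hypothesis \eqref{mainAssumption}: under \eqref{holder}--\eqref{holder2} the vertical oscillation coefficients of the super-graph $\Omega = \{(x,y,t) : x > \phi(\pi_{\W}(x,y,t))\}$ decay fast enough that
\begin{displaymath} \int_{0}^{\infty} \osc_{\Omega}(B(p,r)) \, \frac{dr}{r} \leq C(H,\tau) < \infty, \qquad p \in \Gamma_{\phi}. \end{displaymath}
Once this is shown, Theorem~\ref{main} gives the $L^{2}(\calS^{3}|_{\Gamma_{\phi}})$-boundedness of $\calR$; since $\calS^{3}|_{\Gamma_{\phi}}$ and $\calH^{3}|_{\Gamma_{\phi}}$ are comparable $3$-regular measures, boundedness transfers to $L^{2}(\calH^{3}|_{\Gamma_{\phi}})$ (cf.\ \cite[Lemma~3.1]{CFO2}), and $R$ is bounded on $L^{2}(\calH^{3}|_{\Gamma_{\phi}})$ if and only if the two scalar kernels $XG$ and $YG$ give bounded operators, i.e.\ if and only if $\calR$ is.

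The core of the argument is the pointwise estimate on $\osc_{\Omega}(B(p,r))$ (essentially Lemma~\ref{holderOscillation}). First I would reduce to $p = 0 \in \Gamma_{\phi}$: by Lemma~\ref{basicProp} the coefficients are left-translation invariant, and a short computation tracking how the graph map $\Phi(w) = w \cdot \phi(w)$ transforms under left translations by elements of $\V$, of $\W$, and by vertical translations shows that the class of intrinsic Lipschitz functions obeying \eqref{holder}--\eqref{holder2} with a fixed constant $H$ is preserved under \emph{all} left translations. With $p = 0$ and a fixed scale $s \in (0,r]$, I would compute $\w_{\Omega}(B(0,r))(s)$ by Fubini along the vertical lines $\{(x,y,\cdot)\}$; after the measure-preserving substitution $t \mapsto u = t + \tfrac{1}{2}xy$ on such a line, one has $(x,y,t) \in \Omega \iff x > \phi(y,u)$, while $q \mapsto q\cdot(0,0,s^{2})$ becomes $u \mapsto u + s^{2}$. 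Integrating first in $x$ bounds the inner integral by $|\phi(y,u) - \phi(y,u+s^{2})|$, and the constraint $(x,y,t)\in B(0,r)$ confines $(y,u)$ to a set of Lebesgue measure $\lesssim r\cdot r^{2} = r^{3}$ (an interval of length $\lesssim r$ in $y$, and, for each fixed $x,y$, an interval of length $\lesssim r^{2}$ in $u$).

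Plugging \eqref{holder} into this for $s \leq 1$ and \eqref{holder2} for $s > 1$ yields
\begin{displaymath} \w_{\Omega}(B(0,r))(s) \lesssim r^{3}\min\{Hs^{1+\tau},\, r\} \;\text{ for } s\leq 1, \qquad \w_{\Omega}(B(0,r))(s) \lesssim r^{3}\min\{Hs^{1-\tau},\, r\} \;\text{ for } s>1, \end{displaymath}
and averaging over $s \in (0,r]$ against $ds/r^{5}$ as in Definition~\ref{oscillationCoeff} gives, by an elementary computation, $\osc_{\Omega}(B(0,r)) \lesssim_{H,\tau} r^{\tau}$ for $r \leq 1$ and $\osc_{\Omega}(B(0,r)) \lesssim_{H,\tau} r^{-\tau}$ for $r > 1$ (the bounded range $r \asymp 1$ contributing only a finite logarithm via the trivial bound $\osc_{\Omega}\lesssim 1$ of Lemma~\ref{basicProp}). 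Both bounds are integrable against $dr/r$ near their respective endpoints, so $\int_{0}^{\infty}\osc_{\Omega}(B(p,r))\,dr/r \lesssim_{H,\tau} 1/\tau$ uniformly in $p$, which is \eqref{mainAssumption}.

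I expect the main obstacle to be the bookkeeping in the Fubini/change-of-variables step, in particular pinning down the $(y,u)$-region precisely, together with recognising that the two-regime H\"older hypothesis is \emph{exactly} what the scheme requires: the small-scale exponent $\tfrac{1+\tau}{2} > \tfrac12$ is needed to make $\int_{0} \osc_{\Omega}(B(p,r))\,dr/r$ converge at $r = 0$, while the large-scale exponent $\tfrac{1-\tau}{2}$ is what still forces the genuine decay $\osc_{\Omega}(B(0,r)) \lesssim_{H,\tau} r^{-\tau}$ at $r = \infty$, where the universal bound $\osc_{\Omega} \lesssim 1$ would fail to be integrable against $dr/r$.
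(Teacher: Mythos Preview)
Your proposal is correct and reaches the same conclusion as the paper, but via a genuinely different (and arguably cleaner) computation of the key estimate $\osc_{\Omega}(B(p,r)) \lesssim \min\{r^{\tau},r^{-\tau}\}$.

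The paper's proof of Lemma~\ref{holderOscillation} is geometric: it first argues that the integrand $|\chi_{\Omega}(q)-\chi_{\Omega}(q\cdot(0,0,s^{2}))|$ vanishes unless $q$ lies in the thin neighbourhood $\Gamma(Hr^{1\pm\tau})$ of the graph, so that $\w_{\Omega}(B(p,r))(s)\leq \calH^{4}(B(p,r)\cap\Gamma(Hr^{1\pm\tau}))$; it then bounds this measure by a covering argument using the $3$-regularity of $\Gamma$, which is where the constant $H^{4}$ enters. Your approach instead performs the integral directly: after the change of variables $u=t+\tfrac{1}{2}xy$ and Fubini, integrating out the $x$-variable yields exactly $|\phi(y,u)-\phi(y,u+s^{2})|$, and the residual $(y,u)$-domain has measure $\lesssim r^{3}$. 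This avoids both the neighbourhood argument and the appeal to $3$-regularity, and it produces a linear (rather than quartic) dependence on $H$. Conversely, the paper's argument is more robust in that it never needs to track the precise $(y,u)$-region or verify that the H\"older class is closed under all left translations; it works directly at an arbitrary centre $p\in\Gamma_{\phi}$.
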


As a corollary, we recover the main theorem of \cite{CFO2} for the Riesz transform:

\begin{cor} Let $\W \subset \He$ be a vertical plane, let $\alpha > 0$, and let $\phi \colon \W \to \V$ be a compactly supported $C^{1,\alpha}(\W)$ in the sense of \cite{CFO2}. Then $R$ is bounded on $L^{2}(\calH^{3}|_{\Gamma_{\phi}})$.
\end{cor}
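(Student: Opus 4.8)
The plan is to deduce the corollary from Theorem \ref{main2} by checking that a compactly supported $\phi \in C^{1,\alpha}(\W)$ satisfies the vertical H\"older conditions \eqref{holder} and \eqref{holder2} for a suitably small $\tau = \tau(\alpha,L) > 0$, where $L$ is the intrinsic Lipschitz constant of $\phi$. First I would record that $\phi$ is bounded, say $\|\phi\|_{\infty} =: M < \infty$, since an intrinsic Lipschitz function is continuous and $\phi$ has compact support. Condition \eqref{holder2} is then immediate for \emph{every} $\tau \in (0,1]$: if $|s - t| > 1$ then $|t - s|^{(1-\tau)/2} \geq 1$, so $|\phi(y,t) - \phi(y,s)| \leq 2M \leq 2M|t - s|^{(1-\tau)/2}$.

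The real content is \eqref{holder}. I would begin from the elementary identity
\begin{equation}\label{keyIdentity}
\Phi(0,y,s)^{-1} \cdot \Phi(0,y,t) = (\phi(y,t) - \phi(y,s),\, 0,\, t - s), \qquad y,s,t \in \R,
\end{equation}
whose right-hand side has $\pi_{\W}$-part of norm $2\sqrt{|t - s|}$ and $\pi_{\V}$-part of norm $|\phi(y,t) - \phi(y,s)|$; the intrinsic Lipschitz condition thus recovers the standard vertical $\tfrac12$-H\"older bound $|\phi(y,t) - \phi(y,s)| \leq 2L|t - s|^{1/2}$ and, more importantly, shows that $p := \Phi(0,y,s)$ and $q := \Phi(0,y,t)$ both lie in $\overline{B(p, 2L\sqrt{|t - s|})} \cap \Gamma$. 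To improve the exponent I would invoke \cite[Proposition 4.1]{CFO2}, which turns the $C^{1,\alpha}$ hypothesis into a geometric decay of the affine approximation of $\Gamma$: there are constants $A \geq 1$ and $\alpha' = \alpha'(\alpha) > 0$ with $\beta_{\Gamma, \infty}(B(x,r)) \leq A r^{\alpha'}$ for all $x \in \Gamma$ and all $0 < r \leq 1$ (uniformly, using compact support; for larger $r$ the trivial bound $\beta_{\Gamma,\infty} \lesssim 1$ is enough). Fixing $s,t$ with $0 < |t - s| \leq 1$, writing $\rho := |t - s|^{1/2}$, and assuming $3L\rho \leq 1$ (the complementary range of $|t-s|$ being again controlled by boundedness of $\phi$), the $\beta$-bound on $B(p, 3L\rho)$ furnishes a vertical plane $P$ with $\dist(x, P) \leq \gamma := A(3L)^{1+\alpha'}\rho^{1+\alpha'}$ for every $x \in \Gamma \cap B(p, 3L\rho)$; in particular $\dist(p,P) \leq \gamma$ and $\dist(q,P) \leq \gamma$.

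The final step is to convert this into a bound on $|\phi(y,t) - \phi(y,s)| = |x_{p} - x_{q}|$. Choosing $d$-nearest points $p', q' \in P$ to $p$ and $q$, the fact that the first two coordinates of a Heisenberg product simply add, together with $\|(u_1,u_2,u_3)\| \geq |(u_1,u_2)|$, gives $|x_{p} - x_{p'}|, |y_{p} - y_{p'}|, |x_{q} - x_{q'}|, |y_{q} - y_{q'}| \leq \gamma$; since $y_{p} = y_{q} = y$ this yields $|y_{p'} - y_{q'}| \leq 2\gamma$. Because $P$ stays within $\gamma \ll \rho$ of the intrinsic $L$-Lipschitz graph $\Gamma$ over a ball on which $\Gamma$ spans a $y$-interval of length $\gtrsim_{L} \rho$, the plane $P$ is locally the graph of an intrinsic $L'$-Lipschitz function with $L' \lesssim L$; intrinsic-linear functions over $\W$ have the form $\phi_{0}(y,t) = ay + b$, so $\pi_{xy}(P)$ is a line $\{x = ay + b\}$ with $|a| \lesssim_{L} 1$, whence $|x_{p'} - x_{q'}| = |a|\,|y_{p'} - y_{q'}| \lesssim_{L} \gamma$. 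Combining the three estimates gives $|\phi(y,t) - \phi(y,s)| \lesssim_{L} \gamma \lesssim_{L,A} |t - s|^{(1+\alpha')/2}$, which is \eqref{holder} with $\tau := \min\{\alpha', 1\} \in (0,1]$. An appeal to Theorem \ref{main2} then finishes the proof. I expect the main obstacle to be precisely this last conversion: extracting an improved \emph{vertical} H\"older exponent from the (non-directional) geometric decay of the $L^{\infty}$ $\beta$-numbers forces one to control the non-commutative group structure near $p$ and the slope of the approximating vertical planes, whereas the remaining reductions are routine.
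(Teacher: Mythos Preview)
Your plan is sound and reaches the same conclusion as the paper, but by a longer detour. The paper's own proof is three citations: \cite[Proposition~4.2]{CFO2} gives \eqref{holder} directly with $\tau=\alpha$; continuity plus compact support give \eqref{holder2} (exactly as you argue); and \cite[Remark~2.18]{CFO2} supplies the intrinsic Lipschitz property, which you take for granted without justification. Theorem~\ref{main2} then finishes.

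You instead start from the $\beta$-number decay of \cite[Proposition~4.1]{CFO2} and re-derive the vertical H\"older estimate \eqref{holder}. This does work, and illustrates a useful implication (uniform $\beta$-decay forces improved vertical regularity), but the ``main obstacle'' you flag needs a firmer argument than the one you sketch. Two points. First, the $\beta$-bound says $\Gamma\cap B(p,3L\rho)$ lies within $\gamma$ of $P$, not the reverse, so the phrase ``$P$ stays within $\gamma$ of $\Gamma$'' is not what you actually have, and the cone condition for $\Gamma$ does not constrain $P$ directly. Second, the slope bound $|a|\lesssim_{L}1$ follows cleanly once you exhibit two points of $\Gamma\cap B(p,3L\rho)$ whose $y$-coordinates differ by $\gtrsim_{L}\rho$: after translating so that $p=0$ (hence $\phi(0,0)=0$), the intrinsic Lipschitz condition gives $|\phi(y',0)|\leq L|y'|$ and therefore $\Phi(0,y',0)\in B(0,C_{L}|y'|)$; taking $y'=\pm c_{L}\rho$ yields two points of $\Gamma\cap B(0,3L\rho)$ whose $xy$-projections are $\gtrsim_{L}\rho$ apart in $y$ and both within $\gamma$ of the line $\pi_{xy}(P)$, which pins its direction. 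With that in hand your chain $|\Delta\phi|\lesssim_{L}\gamma\lesssim_{L,A}|t-s|^{(1+\alpha')/2}$ goes through.
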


\begin{proof} By \cite[Proposition 4.2]{CFO2}, an intrinsic $C^{1,\alpha}$-function $\phi$ satisfies \eqref{holder} with exponent $\tau = \alpha$. Since $\phi$ is continuous and compactly supported, \eqref{holder2} is also satisfied if the constant $H$ is chosen large enough. To apply Theorem \ref{main2}, we still need to argue that $\phi$ is intrinsic Lipschitz: this is the content of \cite[Remark 2.18]{CFO2}. \end{proof}

Besides the compact support assumption, a notable difference between Theorem \ref{main2} and the main theorem of \cite{CFO2} is that the intrinsic $C^{1,\alpha}$-condition implies extra regularity in both vertical and horizontal directions. The conditions \eqref{holder}-\eqref{holder2}, on the other hand, imply nothing about the horizontal behaviour of $\phi$. To emphasise this, we give another corollary of Theorem \ref{main2}:
\begin{cor} Let $\phi_{0} \colon \R \to \R$ be a (Euclidean) Lipschitz function, and let $\phi(0,y,t) := \phi_{0}(y)$. Then $\mathcal{R}$ is bounded on $L^{2}(\mu)$, where $\mu$ is $\calH^{3}$ restricted to $\Gamma_{\phi}$.
\end{cor}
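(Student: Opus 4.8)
The plan is to verify the hypotheses of Theorem \ref{main2} for the function $\phi(0,y,t) := \phi_{0}(y)$. Two things must be checked: that $\phi$ is intrinsic Lipschitz, and that $\phi$ satisfies the vertical H\"older estimates \eqref{holder} and \eqref{holder2}. The second is immediate and needs no assumption on $\phi_{0}$ beyond continuity: since $\phi(0,y,t) = \phi_{0}(y)$ does not depend on the vertical coordinate, the left-hand sides of \eqref{holder} and \eqref{holder2} vanish identically, so both hold with, say, $\tau = 1$ and any $H \geq 1$. So the entire content of the proof is the verification that $\phi$ is intrinsic Lipschitz.

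For that, fix $w = (0,y,t)$ and $w' = (0,y',t')$ in $\W$ and recall that the graph map is $\Phi(0,y,t) = (0,y,t)\cdot(\phi_{0}(y),0,0) = (\phi_{0}(y),\, y,\, t - \tfrac{1}{2}y\phi_{0}(y))$. Computing $\Phi(w')^{-1}\Phi(w)$ from the group law, and then applying $\pi_{\W}(x,y,t) = (0,y,t + \tfrac{1}{2}xy)$, a direct calculation gives
\[ \pi_{\W}\big(\Phi(w')^{-1}\Phi(w)\big) = \big(0,\; y - y',\; (t-t') - \phi_{0}(y')(y-y')\big), \]
whence $\|\pi_{\W}(\Phi(w')^{-1}\Phi(w))\| \geq |y - y'|$ by the definition of the gauge $\|(x,y,t)\| = \max\{|(x,y)|, 2\sqrt{|t|}\}$. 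The same calculation shows that the $x$-coordinate of $\Phi(w')^{-1}\Phi(w)$ equals $\phi_{0}(y) - \phi_{0}(y')$, so under the isometric identification of $\V$ with $\R$ we have $\|\pi_{\V}(\Phi(w')^{-1}\Phi(w))\| = |\phi_{0}(y) - \phi_{0}(y')|$. If $L_{0}$ is the Euclidean Lipschitz constant of $\phi_{0}$, then
\[ \|\pi_{\V}(\Phi(w')^{-1}\Phi(w))\| = |\phi_{0}(y) - \phi_{0}(y')| \leq L_{0}|y - y'| \leq L_{0}\,\|\pi_{\W}(\Phi(w')^{-1}\Phi(w))\|, \]
which is precisely the intrinsic $L_{0}$-Lipschitz condition \eqref{eq:intrLip}. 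Theorem \ref{main2} then applies and yields the $L^{2}(\calH^{3}|_{\Gamma_{\phi}})$-boundedness of $\mathcal{R}$.

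There is no genuine obstacle here; the one place asking for slight care is the bookkeeping of the $\tfrac{1}{2}xy$-type terms in the third coordinate of $\Phi(w')^{-1}\Phi(w)$ and of its vertical projection, but as the final inequality uses only the trivial lower bound $\|\pi_{\W}(\cdots)\| \geq |y-y'|$, even a crude estimate of that coordinate would suffice. As a sanity check, note that $\Gamma_{\phi} = \{(\phi_{0}(y), y, s) : y,s \in \R\} = \Gamma_{\phi_{0}} \times \R$ with $\Gamma_{\phi_{0}} \subset \R^{2}$ the Euclidean graph of $\phi_{0}$ --- exactly the motivating example $\Gamma_{\R^{2}} \times \R$ from the introduction, whose complementary domains satisfy \eqref{alternative} and hence have vanishing vertical oscillation; Theorem \ref{main2} simply repackages this into a directly applicable statement.
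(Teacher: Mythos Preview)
Your proof is correct and follows the same approach as the paper's own argument: verify the intrinsic Lipschitz condition via the lower bound $\|\pi_{\W}(\Phi(w')^{-1}\Phi(w))\| \geq |y-y'|$, note that \eqref{holder}--\eqref{holder2} are trivial because $\phi$ is independent of $t$, and invoke Theorem~\ref{main2}. Your version simply spells out the group-law computation that the paper leaves implicit.
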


\begin{proof} We first note that $\phi$ is intrinsic Lipschitz, because
\begin{displaymath} |\phi(0,y,t) - \phi(0,y',t')| \lesssim |y - y'| \leq \|\pi_{\W}(\Phi(0,y',t')^{-1} \cdot \Phi(0,y,t))\|, \end{displaymath}
where $\Phi(0,y,t) = (0,y,t) \cdot (\phi(0,y,t),0,0)$ is the graph map parametrising $\Gamma_{\phi}$. Conditions \eqref{holder}-\eqref{holder2} are trivially satisfied, so the claim follows from Theorem \ref{main2}. \end{proof}

\subsection{Proof of Theorem \ref{main2}} The proof is based on the following lemma:
\begin{lemma}\label{holderOscillation} Assume that $\phi \colon \W := \{(0,y,t) : y,t \in \R\} \to \R$ is intrinsic Lipschitz and satisfies \eqref{holder}-\eqref{holder2}. Then,
\begin{equation}\label{form9} \osc_{\Omega}(B(p,r)) \lesssim H^{4}\min\{r^{\tau},r^{-\tau}\}, \qquad p \in \Gamma_{\phi}, \: 0 < r < \infty, \end{equation}
where $\Omega = \{(x,y,t) : x > \phi(\pi_{\W}(x,y,t))\}$, and the implicit constants depend on the intrinsic Lipschitz constants of $\phi$.
\end{lemma}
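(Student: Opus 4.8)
The plan is to unfold the definition of $\osc_{\Omega}(B(p,r))$, pass to coordinates adapted to the graph, identify the vertical perimeter $\w_{\Omega}(B(p,r))(s)$ explicitly in terms of $\phi$, and then reduce the whole estimate to the two vertical H\"older bounds \eqref{holder}--\eqref{holder2} together with a routine one-variable integration in the scale parameter. Concretely, for $q=(x,y,t)$ put $\beta:=t+\tfrac{1}{2}xy$, so that $\pi_{\W}(q)=(0,y,\beta)$ and $q\cdot(0,0,s^{2})=(x,y,t+s^{2})$ has adapted coordinates $(x,y,\beta+s^{2})$; the map $(x,y,\beta)\mapsto(x,y,t)$ has Jacobian determinant $1$, hence preserves Lebesgue measure $\mathcal{L}^{3}\sim\calH^{4}$. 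In these coordinates $\Omega=\{(x,y,\beta):x>\phi(0,y,\beta)\}$, so, writing $\phi(y,\beta):=\phi(0,y,\beta)$, for every fixed $(y,\beta)$ one has
\begin{displaymath} \int_{\R}|\chi_{\Omega}(x,y,\beta)-\chi_{\Omega}(x,y,\beta+s^{2})|\,dx=|\phi(y,\beta)-\phi(y,\beta+s^{2})|, \end{displaymath}
and since $|(\beta+s^{2})-\beta|=s^{2}$, conditions \eqref{holder}--\eqref{holder2} bound the right-hand side by $Hs^{1+\tau}$ when $s\le1$ and by $Hs^{1-\tau}$ when $s>1$.

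Next (Step 2: geometry of the ball) I would record that a direct computation with $d(p,q)=\|p^{-1}\cdot q\|$ shows that $q=(x,y,t)\in B(p,r)$, $p=(x_{p},y_{p},t_{p})$, forces $|x-x_{p}|\le r$, $|y-y_{p}|\le r$ and $|\beta-\beta_{p}-x_{p}(y-y_{p})|\le\tfrac{3}{4}r^{2}$; thus the image of $B(p,r)$ in adapted coordinates is contained in a sheared box whose projection to the $(y,\beta)$-plane has $\mathcal{L}^{2}$-measure $\lesssim r^{3}$, uniformly in the base point $p$. Integrating the identity above first in $x$ over this box and then in $(y,\beta)$ yields
\begin{displaymath} \w_{\Omega}(B(p,r))(s)\lesssim r^{3}Hs^{1+\tau}\quad(0<s\le1),\qquad\w_{\Omega}(B(p,r))(s)\lesssim r^{3}Hs^{1-\tau}\quad(s>1), \end{displaymath}
while the trivial estimate $\w_{\Omega}(B(p,r))(s)\le2\calH^{4}(B(p,r))\lesssim r^{4}$ always holds.

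For Step 3 I would just integrate in $s$, using $\osc_{\Omega}(B(p,r))=r^{-5}\int_{0}^{r}\w_{\Omega}(B(p,r))(s)\,ds$. If $r\le1$ the first bound gives $\osc_{\Omega}(B(p,r))\lesssim r^{-5}\int_{0}^{r}r^{3}Hs^{1+\tau}\,ds\lesssim Hr^{\tau}$. If $r>1$, split $\int_{0}^{r}=\int_{0}^{1}+\int_{1}^{r}$: the first part contributes $\lesssim r^{-5}\cdot Hr^{3}=H/r^{2}$, and for the second part, using $\w\lesssim\min\{r^{4},r^{3}Hs^{1-\tau}\}$,
\begin{displaymath} r^{-5}\int_{1}^{r}\min\{r^{4},r^{3}Hs^{1-\tau}\}\,ds=r^{-2}\int_{1}^{r}\min\{r,Hs^{1-\tau}\}\,ds\le r^{-2}\min\left\{\int_{1}^{r}r\,ds,\ \int_{1}^{r}Hs^{1-\tau}\,ds\right\}\lesssim\min\{1,Hr^{-\tau}\}. \end{displaymath}
One then checks $\osc_{\Omega}(B(p,r))\lesssim H^{4}\min\{r^{\tau},r^{-\tau}\}$ in all cases: for $r\le1$, $Hr^{\tau}=H\min\{r^{\tau},r^{-\tau}\}\le H^{4}\min\{r^{\tau},r^{-\tau}\}$; for $r>1$, $H/r^{2}\le Hr^{-\tau}$ since $\tau\le2$, while $\min\{1,Hr^{-\tau}\}\le H^{4}r^{-\tau}$ because either $Hr^{-\tau}\le1$ or else $H^{4}r^{-\tau}\ge H^{3}\ge1$.

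The only genuinely geometric ingredient, and hence the main obstacle, is the sheared-box description of $B(p,r)$ in Step 2, which must hold uniformly in the (possibly far-away) point $p$; one could instead reduce to $p=0$ by left translation, but then one has to control how the vertical H\"older moduli \eqref{holder}--\eqref{holder2} transform under the induced change of parametrisation, which the direct coordinate computation avoids. Everything after Step 2 is elementary one-variable calculus, and the power $H^{4}$ (rather than $H$) is precisely what is needed to absorb the trivial bound $\osc_{\Omega}\lesssim1$ in the regime where the $L^{\infty}$ H\"older constant $H$ dominates $r^{\tau}$.
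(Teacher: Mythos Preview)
Your proof is correct and takes a genuinely different route from the paper's. The paper argues geometrically: it shows that only points in the tubular neighbourhood $\Gamma(Hr^{1+\tau})$ contribute to $\w_{\Omega}(B(p,r))(s)$, then estimates $\calH^{4}(B(p,r)\cap\Gamma(Hr^{1+\tau}))$ by a covering/packing argument using the $3$-regularity of $\Gamma$ (this is where the intrinsic Lipschitz hypothesis enters, and where the exponent $H^{4}$ comes from, via $(\card S)\cdot(Hr^{1+\tau})^{4}$). Your argument is more direct and more elementary: the change of variables $(x,y,t)\mapsto(x,y,\beta)$ straightens the supergraph, the inner $x$-integral collapses to the vertical increment $|\phi(y,\beta)-\phi(y,\beta+s^{2})|$, and the sheared-box computation for $B(p,r)$ gives the factor $r^{3}$ with constants independent of $p$; no covering and no $3$-regularity are needed. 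As a bonus, your method yields the sharper bound $\osc_{\Omega}(B(p,r))\lesssim H\,r^{\tau}$ for $r\le1$ (linear in $H$), and in fact does not use the intrinsic Lipschitz hypothesis quantitatively at all --- only the vertical H\"older moduli \eqref{holder}--\eqref{holder2} enter. The paper's approach, on the other hand, is perhaps more conceptual in that it cleanly isolates the ``support'' of the oscillation as a tube around $\Gamma$.
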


By Theorem \ref{main}, the lemma above will prove Theorem \ref{main2}.

\begin{proof}[Proof of Lemma \ref{holderOscillation}] The plan is to first use \eqref{holder} to establish the bound
\begin{equation}\label{form13} \osc_{\Omega}(B(p,r)) \lesssim H^{4}r^{\tau}, \qquad p \in \Gamma_{\phi}, \: 0 < r \leq 1. \end{equation}
The second bound in \eqref{form9} will follow by a similar argument from \eqref{holder2} for $r > 1$.

Write $\Gamma := \Gamma_{\phi}$, and fix $0 < r \leq 1$ and $0 < s \leq r$. We claim that
\begin{equation}\label{form12} \w_{\Omega}(B(p,r))(s) = \int_{B(p,r) \cap \Gamma(Hr^{1 + \tau})} {|\chi_{\Omega}(q) - \chi_{\Omega}(q \cdot (0,0,s^{2}))|} \, dq, \end{equation}
where $\Gamma(Hr^{1 + \tau})$ denotes the $(Hr^{1 + \tau})$-neighbourhood of $\Gamma$.
To prove this, it suffices to show that if $q \in B(p,r)$ with $\dist(q,\Gamma) > Hr^{1 + \tau}$, then
\begin{displaymath} \chi_{\Omega}(q) = \chi_{\Omega}(q \cdot (0,0,s^{2})). \end{displaymath}
Indeed, assume to the contrary that $q = (x,y,t) \in B(p,r)$ can be found with $\dist(q,\Gamma) > Hr^{1 + \tau}$ and $\chi_{\Omega}(q) \neq \chi_{\Omega}(q \cdot (0,0,s^{2}))$. This has two consequences: first, in particular
\begin{align*} |x - \phi(\pi_{\W}(x,y,t))| & = d((x,0,0),\phi(\pi_{\W}(q)))\\
& = d(\pi_{\W}(q) \cdot (x,0,0), \pi_{\W}(q) \cdot \phi(\pi_{\W}(q)))\\
& = d(q,\Phi(\pi_{\W}(q))) > Hr^{1 + \tau}, \end{align*}
where $\Phi(w) = w \cdot \phi(w)$ is the graph map parametrising $\Gamma$. Second, there exists $0 \leq u \leq s$ such that $(x,y,t + u^{2}) = q \cdot (0,0,u^{2}) \in \Gamma$, so in particular
\begin{displaymath} x = \phi(\pi_{\W}(q \cdot (0,0,u^{2}))). \end{displaymath}
Combining the information above,
\begin{displaymath} |\phi(\pi_{\W}(x,y,t + u^{2})) - \phi(\pi_{\W}(x,y,t))| > Hr^{1 + \tau}. \end{displaymath}
Spelling out the definition of $\pi_{\W}$, this is equivalent to
\begin{displaymath} Hr^{1 + \tau} < |\phi(0,y,t + u^{2} + \tfrac{1}{2}xy) - \phi(0,y,t + \tfrac{1}{2}xy)| \leq Hu^{1 + \tau} \leq Hs^{1 + \tau} \leq Hr^{1 + \tau}. \end{displaymath}
We have reached a contradiction, and hence proved \eqref{form12}.

It follows from \eqref{form12} that
\begin{displaymath} \osc_{\Omega}(B(p,r)) = \fint_{0}^{r} \frac{\w_{\Omega}(B(p,r))(s)}{r^4} \, {ds} \lesssim \frac{\calH^{4}(B(p,r) \cap \Gamma(Hr^{1 + \tau}))}{r^{4}}. \end{displaymath}
To conclude the proof, we find a maximal $Hr^{1 + \tau}$-separated set $S \subset B(p,2Hr) \cap \Gamma$; note that this step uses the assumption $r \leq 1$, so that $r^{1 + \tau} \leq r$.  Since $\Gamma$ is $3$-regular, we have
\begin{equation}\label{form14} \card S \lesssim r^{-3\tau}. \end{equation}
On the other hand, the balls $B(q,10Hr^{1 + \tau})$, $q \in S$, cover $B(p,r) \cap \Gamma(Hr^{1 + \tau})$, whence
\begin{displaymath} \osc_{\Omega}(B(p,r)) \lesssim \frac{\calH^{4}(B(p,r) \cap \Gamma(Hr^{1 + \tau}))}{r^{4}} \lesssim (\card S) \cdot \frac{(Hr^{1 + \tau})^{4}}{r^{4}} \lesssim H^{4}r^{\tau}.  \end{displaymath}
This proves \eqref{form13}.

To prove the second bound in \eqref{form9}, one fixes $r \geq 1$ and proceeds as above, using \eqref{holder2} instead of \eqref{holder}. One first obtains
\begin{displaymath} \w_{\Omega}(B(p,r))(s) = \int_{B(p,r) \cap \Gamma(Hr^{1 - \tau})} {|\chi_{\Omega}(q) - \chi_{\Omega}(q \cdot (0,0,s^{2}))|} \, dq \end{displaymath}
This leads to $\osc_{\Omega}(B(p,r)) \lesssim \calH^{4}(B(p,r) \cap \Gamma(Hr^{1 - \tau}))/r^{4}$. Since $r \geq 1$, one has $r^{1 - \tau} \leq r$. One finally chooses a maximal $Hr^{1 - \tau}$-separated set $S \subset B(p,2Hr) \cap \Gamma$, and finds that \eqref{form14} gets replaced by $\card S \lesssim r^{3\tau}$. This gives $\osc_{\Omega}(B(p,r)) \lesssim H^{4}r^{-\tau}$, as desired.  \end{proof}

\section{Problems and remarks}\label{problems}

\subsection{Carleson packing conditions for the vertical oscillation coefficients?} Theorem \ref{mainIntro} guarantees the $L^{2}$-boundedness of $R$ on intrinsic Lipchitz graphs $\Gamma = \partial \Omega \subset \He$ satisfying the uniform condition
\begin{equation}\label{form15} \int_{0}^{\infty} \osc_{\Omega}(B(p,r)) \, \frac{dr}{r} \lesssim 1, \qquad p \in \Gamma. \end{equation}
A comparison with analogous results in Euclidean space, in particular those in \cite{DS1}, suggests that it might be possible to relax \eqref{form15} to a \emph{Carleson packing condition} for the vertical oscillation coefficients, such as the one below:
\begin{equation}\label{carleson} \int_{B(p_{0},R)}\int_{0}^{R} \osc_{\Omega}(B(p,r))^{\eta} \, d\calH^{3}(p) \, \frac{dr}{r} \lesssim R^{3}, \qquad p_{0} \in \Gamma, \: 0 < R \leq \diam \Omega. \tag{Car$(\eta)$} \end{equation}
Here $\eta \geq 1$ is a parameter, and evidently the condition \eqref{carleson} gets weaker as $\eta$ increases. Two questions now arise:
\begin{question}\label{q1} For which parameters $\eta \geq 1$ -- if any -- does the following hold? Assume that $\Gamma = \partial \Omega \subset \He$ is an intrinsic Lipschitz graph satisfying \eqref{carleson}. Then $R$ is bounded on $L^{2}(\calH^{3}|_{\Gamma})$.
\end{question}

\begin{question}\label{q2} For which parameters $\eta \geq 1$ -- if any -- does the following hold? Every intrinsic Lipschitz graph $\Gamma \subset \He$ satisfies \eqref{carleson}. \end{question}

We have no further insight on either of the questions at the moment. We conjecture that every intrinsic Lipschitz graph $\Gamma \subset \He$ satisfies \eqref{carleson} for $\eta \geq 4$.

\subsection{A connection between vertical perimeter and $\beta$-numbers}\label{perimeterAndBetas}

Let $\Omega \subset \He$ be an open set with $3$-regular boundary, and let $1 \leq p < \infty$. Recall from Remark \ref{vPerRemark} that the $L^{p}$-vertical perimeter of $\Omega$ in a ball $B(q,r)$, $q \in \partial \Omega$, is the quantity
\begin{displaymath} \wp_{\Omega,p}(B(q,r)) := \left( \int_{0}^{\infty} \left( \frac{\w_{\Omega}(B(q,r))(s)}{s} \right)^{p} \, \frac{ds}{s} \right)^{1/p}. \end{displaymath}
Given Corollary \ref{oscillationAndBetas}, it is reasonable to expect an inequality between $\wp_{\Omega,p}$ and some quantity defined via the vertical $\beta$-numbers $\beta_{\partial \Omega,1}$. Such an inequality is given by the following proposition:
\begin{proposition}\label{verticalPerimeterVsBetas} Let $\Omega \subset \He$ be a non-empty open set with $3$-regular boundary. Let $p_{0} \in \partial \Omega$ and $0 < R \leq \diam \Omega$. Then,
\begin{displaymath} \wp_{\Omega,p}(B(p_{0},R)) \lesssim R^{3} + \int_{B(p_{0},CR) \cap \partial \Omega} \left( \int_{0}^{R} \beta_{\partial \Omega,1}(B(q,Cr))^{p} \, \frac{dr}{r} \right)^{1/p} \, d\calH^{3}(q), \end{displaymath}
where $C \geq 1$ is an absolute constant.
\end{proposition}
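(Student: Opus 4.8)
The plan is to control the $L^{p}(ds/s)$-norm defining $\wp_{\Omega,p}(B(p_{0},R))$ by splitting the $s$-integral at the scale $s=R$. For the range $s>R$ one simply uses the crude bound $\w_{\Omega}(B(p_{0},R))(s)\leq\calH^{4}(B(p_{0},R))\lesssim R^{4}$, so that
\[ \int_{R}^{\infty}\Big(\frac{\w_{\Omega}(B(p_{0},R))(s)}{s}\Big)^{p}\,\frac{ds}{s}\lesssim R^{4p}\int_{R}^{\infty}s^{-p}\,\frac{ds}{s}\lesssim R^{3p}, \]
which will account for the additive $R^{3}$ in the statement. All the real work lies in the range $0<s\leq R$.

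The key geometric observation for $0<s\leq R$ is that the integrand $|\chi_{\Omega}(p)-\chi_{\Omega}(p\cdot(0,0,s^{2}))|$ vanishes unless $\dist(p,\partial\Omega)\leq 2s$: if it is nonzero, the vertical segment $\{p\cdot(0,0,u):0\leq u\leq s^{2}\}$ meets $\partial\Omega$ at some $p\cdot(0,0,u_{0})$, and $d(p,p\cdot(0,0,u_{0}))=2\sqrt{u_{0}}\leq 2s$. So I would fix a maximal $s$-separated set $\{q_{i}\}\subset B(p_{0},5R)\cap\partial\Omega$; by $3$-regularity the balls $B(q_{i},s)$ have bounded overlap and cover $B(p_{0},5R)\cap\partial\Omega$, and a short triangle-inequality check shows that the enlargements $B(q_{i},Cs)$ cover $B(p_{0},R)\cap(\partial\Omega)(2s)$ for an absolute $C$. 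Summing the restricted integrals and applying Corollary \ref{oscillationAndBetas} to each ball $B(q_{i},Cs)$, which is legitimate since $q_{i}\in\partial\Omega$ and the hypothesis $0<s\leq r$ holds with $r=Cs$, gives
\[ \w_{\Omega}(B(p_{0},R))(s)\leq\sum_{i}\w_{\Omega}(B(q_{i},Cs))(s)\lesssim s^{4}\sum_{i}\beta_{\partial\Omega,1}(B(q_{i},Cs)). \]

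Next I would turn the sum over the scattered centres into an integral over $\partial\Omega$. Using the elementary weak monotonicity of $\beta_{\partial\Omega,1}$ for a $3$-regular set — if $B(q',r')\subset B(q'',r'')$ with $r''\leq c r'$, then $\beta_{\partial\Omega,1}(B(q',r'))\lesssim_{c}\beta_{\partial\Omega,1}(B(q'',r''))$, proved by testing the optimal plane for the larger ball on the smaller one (as in the proof of Corollary \ref{oscillationAndBetas}) — together with $\calH^{3}(B(q_{i},s)\cap\partial\Omega)\gtrsim s^{3}$ and the bounded overlap, one obtains, for every $0<s\leq R$,
\[ \frac{\w_{\Omega}(B(p_{0},R))(s)}{s}\lesssim\int_{B(p_{0},CR)\cap\partial\Omega}\beta_{\partial\Omega,1}(B(q,Cs))\,d\calH^{3}(q). \]
Taking the $L^{p}(ds/s)$-norm over $(0,R]$, moving it inside the $d\calH^{3}(q)$-integral by Minkowski's integral inequality, and renaming the inner variable yields precisely
\[ \Big(\int_{0}^{R}\Big(\tfrac{\w_{\Omega}(B(p_{0},R))(s)}{s}\Big)^{p}\tfrac{ds}{s}\Big)^{1/p}\lesssim\int_{B(p_{0},CR)\cap\partial\Omega}\Big(\int_{0}^{R}\beta_{\partial\Omega,1}(B(q,Cr))^{p}\,\tfrac{dr}{r}\Big)^{1/p}d\calH^{3}(q), \]
and combining this with the $s>R$ estimate completes the proof.

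I expect the main obstacle to be not a single deep step but the careful bookkeeping of the covering argument: one must correctly identify the tubular neighbourhood of $\partial\Omega$ on which the truncated difference is supported (width $\sim s$ in the metric $d$), choose the radius in Corollary \ref{oscillationAndBetas} proportional to $s$ rather than $R$ so that the constraint $s\leq r$ is respected, and check that the weak monotonicity of the $\beta$-numbers and $3$-regularity are applied correctly when each centre $q_{i}$ is replaced by a running point $q\in B(q_{i},s)\cap\partial\Omega$. The remaining ingredients — the trivial large-scale tail bound and Minkowski's inequality — are routine.
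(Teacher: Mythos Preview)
Your proposal is correct and follows essentially the same route as the paper's own proof: split the $s$-integral at $R$, handle $s>R$ by the crude bound $\w_{\Omega}(B(p_{0},R))(s)\lesssim R^{4}$, and for $0<s\leq R$ use the observation that the integrand is supported in a $2s$-neighbourhood of $\partial\Omega$, cover by balls of radius $\sim s$ centred on $\partial\Omega$, apply Corollary~\ref{oscillationAndBetas} ball by ball, and then convert the sum to an integral via $3$-regularity and bounded overlap before finishing with Minkowski's inequality. The only cosmetic difference is that the paper first covers $B(p_{0},R)$ by generic balls and then shifts each relevant one to a nearby centre on $\partial\Omega$, whereas you start directly from an $s$-net on $\partial\Omega$; the resulting estimate \eqref{form30} is the same.
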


\begin{proof} Fix $0 < r \leq R$. We start by arguing that
\begin{equation}\label{form30} \frac{\w_{\Omega}(B(p_{0},R))(r)}{r} \lesssim \int_{B(p_{0},CR) \cap \partial \Omega} \beta_{\partial \Omega,1}(B(p,Cr)) \, d\calH^{3}(p). \end{equation}
To this end, let $\calB_{r}$ be a finite family of balls of radius $r$ covering $B(p_{0},R)$ such that the concentric balls of radius $r/2$ are disjoint. Note that if $\dist(B,\partial \Omega) > 2r$, then
\begin{displaymath} |\chi_{\Omega}(q) - \chi_{\Omega}(q \cdot (0,0,r^{2}))| = 0, \qquad q \in B, \end{displaymath}
because $d(q,q \cdot (0,0,r^{2})) = 2r$ with our choice of metric $d$, recall \eqref{eq:metric}. Whenever $B \in \calB_{r}$ with $\dist(B,\partial \Omega) \leq 2r$, we pick some ball $\hat{B} \supset B$, which is centred on $\partial \Omega$ and has radius at most $5r$. By the $3$-regularity of the boundary, we then have
\begin{displaymath} \calH^{3}(\hat{B} \cap \partial \Omega) \sim r^{3}, \qquad B \in \calB_{r}, \: \dist(B,\partial \Omega) \leq 2r. \end{displaymath}
Then, by the bounded overlap of the balls $\hat{B}$, and applying Corollary \ref{oscillationAndBetas}, we can estimate as follows:
\begin{align*} \frac{\w_{\Omega}(B(p_{0},R))(r)}{r} & = \int_{B(p_{0},R)} \frac{|\chi_{\Omega}(q) -\chi_{\Omega}(q \cdot (0,0,r^{2}))|}{r} \, dq \\
& \leq \mathop{\sum_{B \in \calB_{r}}}_{\dist(B,\partial \Omega) \leq 2r} \int_{B} \frac{|\chi_{\Omega}(q) - \chi_{\Omega}(q \cdot (0,0,r^{2}))|}{r} \, dq \\
& \lesssim \mathop{\sum_{B \in \calB_{r}}}_{\dist(B,\partial \Omega) \leq 2r} \frac{\w_{\Omega}(\hat{B})(r)}{r^{4}} \calH^{3}(\hat{B} \cap \partial \Omega)\\
& \lesssim \mathop{\sum_{B \in \calB_{r}}}_{\dist(B,\partial \Omega) \leq 2r} \beta_{\partial \Omega,1}(24\hat{B})\calH^{3}(\hat{B} \cap \partial \Omega)\\
& \lesssim \int_{B(p_{0},CR)} \beta_{\partial \Omega,1}(B(q,Cr)) \, d\calH^{3}(q),  \end{align*}
This is \eqref{form30}. Applying Minkowski's integral inequality, we infer the following bound:
\begin{align*} \left( \int_{0}^{R} \left( \frac{\w_{\Omega}(B(p_{0},R))(r)}{r} \right)^{p} \, \frac{dr}{r} \right)^{1/p} & \lesssim \left( \int_{0}^{R} \left( \int_{B(p_{0},CR) \cap \partial \Omega} \beta_{\partial \Omega,1}(B(q,Cr)) \, d\calH^{3}(q) \right)^{p} \, \frac{dr}{r} \right)^{1/p}\\
& \leq \int_{B(p_{0},CR) \cap \partial \Omega} \left(\int_{0}^{R} \beta_{\partial \Omega,1}(B(q,Cr))^{p} \, \frac{dr}{r} \right)^{1/p} \, d\calH^{3}(q). \end{align*}
Finally, it remains to note that
\begin{displaymath} \left(\int_{R}^{\infty} \left(\frac{\w_{\Omega}(B(p_{0},R))(r)}{r} \right)^{p} \, \frac{dr}{r} \right)^{1/p} \lesssim \left(\int_{R}^{\infty} \frac{R^{4p}}{r^{p + 1}} \, dr \right)^{1/p} \sim R^{3}, \end{displaymath}
and the proposition follows by combining the two estimates above. \end{proof}

As an immediate corollary, we infer that if the $\beta_{\partial \Omega,1}$-numbers satisfy a Carleson packing condition similar to \eqref{carleson}, namely
\begin{equation}\label{carlesonBeta} \int_{B(p_{0},R)} \int_{0}^{R} \beta_{\partial \Omega,1}(B(q,r))^{p} \, d\calH^{3}(q) \, \frac{dr}{r} \lesssim R^{3}, \qquad p_{0} \in \partial \Omega, \: 0 < R \leq \diam \Omega, \end{equation}
then the $L^{p}$-vertical perimeter is bounded by (a constant times) the horizontal perimeter:
\begin{cor}\label{verticalPerimeterCorollary} Let $1\leq p<\infty$. Assume that $\Omega \subset \He$ is a non-empty open set with $3$-regular boundary, and assume that \eqref{carlesonBeta} holds. Then
\begin{displaymath} \wp_{\Omega,p}(B(q,r)) \lesssim r^{3}, \qquad q \in \partial \Omega, \: 0 < r \leq \diam \Omega. \end{displaymath}
\end{cor}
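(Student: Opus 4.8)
The plan is to read Corollary \ref{verticalPerimeterCorollary} off Proposition \ref{verticalPerimeterVsBetas} together with the hypothesis \eqref{carlesonBeta}, using only Hölder's inequality and $3$-regularity as glue. First I would fix $q \in \partial\Omega$ and $0 < r \le \diam\Omega$ and apply Proposition \ref{verticalPerimeterVsBetas} with $p_{0} := q$ and $R := r$, which yields
\begin{displaymath} \wp_{\Omega,p}(B(q,r)) \lesssim r^{3} + \int_{B(q,Cr)\cap\partial\Omega}\left(\int_{0}^{r} \beta_{\partial\Omega,1}(B(q',Cr'))^{p}\,\frac{dr'}{r'}\right)^{1/p}\,d\calH^{3}(q'). \end{displaymath}
Thus everything reduces to showing that the second term is $\lesssim r^{3}$.

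For that, I would set $F(q') := \bigl(\int_{0}^{r} \beta_{\partial\Omega,1}(B(q',Cr'))^{p}\,\tfrac{dr'}{r'}\bigr)^{1/p}$ and invoke Hölder's inequality with exponents $p$ and $p/(p-1)$ on the finite measure space $(B(q,Cr)\cap\partial\Omega,\calH^{3})$; since $\partial\Omega$ is $3$-regular, its mass there is $\lesssim (Cr)^{3} \lesssim r^{3}$, so
\begin{displaymath} \int_{B(q,Cr)\cap\partial\Omega} F\,d\calH^{3} \lesssim \bigl(r^{3}\bigr)^{1-1/p}\left(\int_{B(q,Cr)\cap\partial\Omega} F(q')^{p}\,d\calH^{3}(q')\right)^{1/p}. \end{displaymath}
Next I would unfold $\int F^{p}\,d\calH^{3}$ by Fubini, substitute $s = Cr'$ to rewrite the inner integral as $\int_{0}^{Cr}\beta_{\partial\Omega,1}(B(q',s))^{p}\,\tfrac{ds}{s}$, and then apply the Carleson packing hypothesis \eqref{carlesonBeta} with centre $q$ and scale $Cr$ to conclude $\int_{B(q,Cr)\cap\partial\Omega} F^{p}\,d\calH^{3} \lesssim (Cr)^{3} \lesssim r^{3}$. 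Plugging this back in, the second term is $\lesssim (r^{3})^{1-1/p}(r^{3})^{1/p} = r^{3}$, hence $\wp_{\Omega,p}(B(q,r)) \lesssim r^{3}$, as claimed.

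Since each step is a direct citation of Proposition \ref{verticalPerimeterVsBetas}, a one-line Hölder estimate, or an application of the hypothesis, I do not expect a genuine obstacle. The only thing needing mild care is matching the absolute constant $C$ (and the various ball radii) to the precise form of \eqref{carlesonBeta}, and the harmless endpoint case $Cr > \diam\Omega$, which I would dispatch by splitting the $s$-integral at $\diam\Omega$ and bounding the leftover short range crudely using $\beta_{\partial\Omega,1}\lesssim 1$ together with the $3$-regularity of $\partial\Omega$.
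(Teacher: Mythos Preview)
Your proposal is correct and follows exactly the route the paper indicates: apply Proposition \ref{verticalPerimeterVsBetas}, then H\"older's inequality (using $3$-regularity for the measure bound), and finally the Carleson hypothesis \eqref{carlesonBeta}. The paper's own proof is a one-line sketch of precisely these three steps, and your write-up merely fills in the routine details, including the harmless endpoint adjustment when $Cr > \diam\Omega$.
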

\begin{proof} Apply Proposition \ref{verticalPerimeterVsBetas}, then H\"older's inequality, and finally \eqref{carlesonBeta}. \end{proof}

\bibliographystyle{plain}
\bibliography{references}

\end{document}